% SIAM Article Template
\documentclass[onefignum,onetabnum]{siamart171218}

% Information that is shared between the article and the supplement
% (title and author information, macros, packages, etc.) goes into
% ex_shared.tex. If there is no supplement, this file can be included
% directly.

% Optional PDF information

% The next statement enables references to information in the
% supplement. See the xr-hyperref package for details.

% Packages and macros go here

\usepackage{lipsum}
\usepackage{amsfonts}
\usepackage{graphicx}
\usepackage{epstopdf}
\usepackage{algorithm}
\usepackage{algorithmic}
\usepackage{tikz}

\usepackage{multirow}
\usepackage{listings}
\usepackage{mathtools}
\usepackage{latexsym,amsmath,amsfonts,amscd}
\usepackage{subfigure}
\usepackage{bbm}

\newtheorem{thm}{Theorem}[section]
\newtheorem{lem}[thm]{Lemma}

\newtheorem{rmk}[thm]{Remark}

\newtheorem{ex}{Example}

\ifpdf
  \DeclareGraphicsExtensions{.eps,.pdf,.png,.jpg}
\else
  \DeclareGraphicsExtensions{.eps}
\fi

% Add a serial/Oxford comma by default.

% Sets running headers as well as PDF title and authors
\headers{A Bound-Preserving Compact Finite Difference Scheme}{H. Li, S. Xie and X. Zhang}

% Title. If the supplement option is on, then "Supplementary Material"
% is automatically inserted before the title.
\title{A high order accurate bound-preserving compact finite difference scheme for scalar convection diffusion equations
\thanks{H. Li and X. Zhang  were supported by the NSF grant DMS-1522593. 
S. Xie was supported by NSFC grant 11371333 and Fundamental Research Funds for the Central Universities 201562012. }}

% Authors: full names plus addresses.
\author{Hao Li\thanks{Department of Mathematics,
Purdue University,
150 N. University Street,
West Lafayette, IN 47907-2067
  (\email{li2497@purdue.edu}, \email{zhan1966@purdue.edu}).}
\and Shusen Xie
\thanks{School of Mathematical Sciences,
Ocean University of China,
238 Songling Road,
Qingdao 266100, PR China (\email{shusenxie@ouc.edu.cn})}
  \and Xiangxiong Zhang \footnotemark[2]}

\usepackage{amsopn}

% FundRef data to be entered by SIAM
%<funding-group>
%<award-group>
%<funding-source>
%<named-content content-type="funder-name"> 
%</named-content> 
%<named-content content-type="funder-identifier"> 
%</named-content>
%</funding-source>
%<award-id> </award-id>
%</award-group>
%</funding-group>

\begin{document}

\maketitle

% REQUIRED
\begin{abstract}
We show that the classical fourth order accurate compact finite difference scheme with high order strong stability preserving time discretizations for 
convection diffusion problems satisfies a weak monotonicity property, which implies that a simple  limiter can enforce 
the bound-preserving property without losing conservation and high order accuracy. Higher order accurate compact finite difference schemes satisfying the weak monotonicity will also be discussed. 
\end{abstract}

% REQUIRED
\begin{keywords}
finite difference method, compact finite difference, high order accuracy, convection diffusion equations, bound-preserving, maximum principle
\end{keywords}

% REQUIRED
\begin{AMS}
65M06, 65M12
\end{AMS}

\section{Introduction}
\label{sec1}
\setcounter{equation}{0}
\setcounter{figure}{0}
\setcounter{table}{0}
\subsection{The bound-preserving property}

Consider the initial value problem for a  scalar convection diffusion equation
$u_t+f(u)_x=a(u)_{xx},\quad u(x,0)=u_0(x),$
where $a'(u)\geq 0$.  Assume $f(u)$ and $a(u)$ 
are well-defined smooth functions for any $u\in[m, M]$ where  $m=\min_x u_0(x)$ and $M=\max_x u_0(x)$. 
Its exact solution satisfies:
\begin{equation}
\min_x u_0(x)=m\leq u(x,t)\leq M=\max_x u_0(x),\quad \forall t\geq 0.
\label{bp-property}
\end{equation}
In this paper, we are interested in constructing a high order accurate finite difference scheme 
satisfying the bound-preserving property  \eqref{bp-property}. 

For a scalar problem, it is desired to achieve \eqref{bp-property}
in numerical solutions mainly for the physical meaning. For instance, if $u$ denotes density and $m=0$, then negative numerical solutions are meaningless. 
In practice, in addition to enforcing \eqref{bp-property}, it is also critical to strictly enforce  the global conservation of numerical solutions for a time-dependent convection 
dominated problem.  
Moreover, the computational cost for enforcing \eqref{bp-property} should not be significant if it  is needed for each time
step.

\subsection{Popular methods for convection problems}
For the convection problems, i.e., $a(u)\equiv 0$, 
a straightforward way to achieve the above goals is to require a scheme to be monotone, total-variational-diminishing (TVD), or satisfying a discrete maximum principle, 
which all imply the bound-preserving property.
But most schemes satisfying these stronger properties are at most second order accurate. 
For instance, a monotone scheme and traditional TVD finite difference and finite volume schemes are at most first order accurate \cite{leveque1992}.
Even though it is possible to have high order TVD finite volume schemes in the sense of measuring the total variation of reconstruction polynomials 
\cite{sanders1988third, zhang2010genuinely}, such schemes can be constructed only for the one-dimensional problems. 
The second order central scheme satisfies a discrete  maximum principle $\min_j u_j^n\leq u_j^{n+1}\leq \max_j u_j^n$ where 
$u^n_j$ denotes the numerical solution at $n$-th time step and $j$-th grid point \cite{levy1997non}.
Any finite difference scheme satisfying such a maximum principle can be at most second order accurate, see Harten's example in \cite{zhang2011maximum}.
By measuring the extrema of reconstruction polynomials, third order maximum-principle-satisfying schemes can be constructed  \cite{liu1996nonoscillatory}
but extensions to multi-dimensional nonlinear
problems are very difficult.

For constructing high order accurate schemes, one can enforce only the bound-preserving property for fixed known bounds, e.g., $m=0$ and $M=1$ if $u$
denotes the density ratio. Even though high order linear schemes cannot be monotone,  high order finite volume type spatial discretizations including the discontinuous 
Galerkin (DG) method satisfy a weak monotonicity property \cite{zhang2010maximum,zhang2011maximum, zhang2012maximum}. Namely, in a scheme consisting of any high order 
finite volume spatial discretization and forward Euler time discretization,
the cell average is a monotone function of the point values of the reconstruction or approximation polynomial at Gauss-Lobatto quadrature points. 
Thus if these point values are in the desired range $[m,M]$, so are the cell averages in the next time step. 
A simple and efficient local bound-preserving limiter can be designed to control these point values without destroying conservation. 
Moreover, this simple limiter is high order accurate, see \cite{zhang2010maximum} and the appendix in \cite{zhang2017positivity}. 
With strong stability preserving (SSP) Runge-Kutta or multistep methods \cite{gottlieb2011strong}, which are 
convex combinations of several formal forward Euler steps, a high order accurate finite volume or DG scheme can be rendered bound-preserving with this limiter. 
These results can be easily extended to multiple dimensions on cells of general shapes.
However, for a general finite difference scheme, the weak monotonicity does not hold. 

For enforcing only the bound-preserving property in high order schemes,  efficient alternatives include a  flux limiter \cite{xu2014parametrized, xiong2015high}
and a sweeping limiter in 
\cite{liu2017simple}. These methods are designed to directly enforce the bounds  without destroying conservation thus can be used on any conservative schemes. Even though they
work well in practice,
it is nontrivial to analyze and rigorously justify the accuracy of these methods especially for multi-dimensional nonlinear problems. 

\subsection{The weak monotonicity in compact finite difference schemes}
\label{sec-intro-monotonicity}
Even though the weak monotonicity does not hold for a general  finite difference scheme, in this paper we will show that some 
high order compact finite difference schemes  
satisfy such a property, which implies a simple limiting procedure can be used to 
enforce bounds without destroying accuracy and conservation. 

To demonstrate the main idea, we first consider a fourth order accurate compact finite difference approximation to the first derivative on the interval $[0,1]$:
\begin{equation*}
\frac16( f'_{i+1}+4f'_i +f'_{i-1})=\frac{f_{i+1}-f_{i-1}}{2\Delta x}+\mathcal{O} (\Delta x^4),
\end{equation*}
where $f_i$ and $f'_i$ are point values of a function $f(x)$ and its derivative $f'(x)$ at uniform grid points $x_i$ $(i=1,\cdots,N)$ respectively.
For periodic boundary conditions, the following tridiagonal linear system needs to be solved to obtain the implicitly defined approximation to the first order derivative:
{\setlength\arraycolsep{2pt}
\begin{equation}
\frac16\begin{pmatrix}
              4 & 1 & & &  1\\
              1 & 4 & 1 & & \\  
               & \ddots & \ddots & \ddots & \\
               & & 1 & 4 &1 \\
              1  & & & 1 & 4 \\
             \end{pmatrix} 
 \begin{pmatrix}
 f'_1 \\
 f'_2\\
 \vdots\\
 f'_{N-1}\\
 f'_N
 \end{pmatrix} =
 \frac{1}{2\Delta x}\begin{pmatrix}
              0 & 1 & & &  -1\\
              -1 & 0 & 1 & &  \\ 
               & \ddots & \ddots & \ddots & \\
               & & -1 & 0 & 1 \\
              1  & & & -1 & 0 \\
             \end{pmatrix} 
  \begin{pmatrix}
 f_1 \\
 f_2\\
 \vdots\\
 f_{N-1}\\
 f_N
 \end{pmatrix}.
 \label{zhang-scheme2}
\end{equation}
}
We refer to the tridiagonal $\frac{1}{6}(1,4,1)$ matrix as a weighting matrix. 
For the one-dimensional scalar conservation laws with periodic boundary conditions on  $[0,1]$:
\begin{equation}
u_t+f(u)_x  =  0,
\quad u(x,0)  =  u_0(x),
\label{1dconvection}
\end{equation}
the semi-discrete fourth order compact finite difference scheme can be written as
\begin{equation}
\label{zhang-scheme1}
 \frac{d\bar{u}_i}{dt}=-\frac{1}{2\Delta x}[f(u_{i+1})-f(u_{i-1})],
\end{equation}
where 
$\bar{u}_i$ is defined as $\bar{u}_i=\frac16 (u_{i-1}+4 u_i +u_{i+1}).
$
Let $\lambda=\frac{\Delta t}{\Delta x}$, then \eqref{zhang-scheme1} with the forward Euler time discretization  becomes
\begin{equation}
\bar{u}_i^{n+1}=\bar{u}_i^n-\frac{1}{2}\lambda[f(u^n_{i+1})-f(u^n_{i-1})].
\label{Euler2}
\end{equation}
The following weak monotonicity holds under the CFL $\lambda \max_u|f'(u)|\leq \frac13$:
\begin{align*}
  \bar{u}^{n+1}_i& =\frac16 (u^n_{i-1}+4 u^n_i + u^n_{i+1})-\frac12\lambda[f(u^n_{i+1})-f(u^n_{i-1})]\\
  &= \frac16[u_{i-1}+3\lambda f(u^n_{i-1})]+\frac16[u^n_{i+1}-3\lambda f(u^n_{i+1})]+\frac46 u^n_i\\
  &= H(u^n_{i-1},u^n_i, u^n_{i+1})=H(\uparrow, \uparrow, \uparrow),
\end{align*}
where $\uparrow$ denotes that the partial derivative with respect to the corresponding argument is non-negative. 
Therefore $m\leq u^n_{i}\leq M$ implies $m=H(m,m,m)\leq \bar u^{n+1}_{i}\leq H(M,M,M)=M,$ thus
\begin{equation}
 \label{weakmonotonicity}
 m\leq \frac16 (u^{n+1}_{i-1}+4 u^{n+1}_{i}+ u^{n+1}_{i+1})\leq M.
\end{equation}
If there is any overshoot or undershoot, i.e., $u^{n+1}_{i}>M$ or $u^{n+1}_{i}<m$ for some $i$, then 
\eqref{weakmonotonicity} implies that a local limiting process can eliminate the overshoot or undershoot.
Here we consider the special case $m=0$ to demonstrate the basic idea of this limiter, and for simplicity we ignore the time step index $n+1$. 
In Section \ref{sec-1d}
we will show that $\frac16 (u_{i-1}+4u_{i}+u_{i+1})\geq 0,\forall i$ implies the following two facts:
\begin{enumerate}
 \item $\max\{ u_{i-1}, u_i, u_{i+1}\}\geq 0;$
 \item If $u_i<0$, then $\frac12 (u_{i-1})_+ +\frac12 (u_{i+1})_+\geq -u_i> 0$, where $(u)_+=\max\{ u, 0\}.$
\end{enumerate}
By the two facts above, when $u_{i}<0$, then the following three-point stencil limiting process can enforce positivity without changing $\sum_i u_i$:
\begin{eqnarray*}
&& v_{i-1}=u_{i-1}+\frac{(u_{i-1})_+}{(u_{i-1})_+ +(u_{i+1})_+}u_i;\quad 
 v_{i+1}= u_{i+1}+\frac{(u_{i+1})_+}{(u_{i-1})_+ +(u_{i+1})_+}u_i, \\
&& \text{replace}\quad u_{i-1},\, u_i,\, u_{i+1}  \quad\text{by} \quad v_{i-1},\, 0, \, v_{i+1}\quad  \textrm{respectively}.
\end{eqnarray*}

In Section \ref{sec-limiter}, we will show that such a simple limiter can enforce the bounds of $u_i$ without destroying accuracy and conservation.
Thus with SSP high order time discretizations, the fourth order compact finite difference scheme solving \eqref{1dconvection}
can be rendered bound-preserving by this limiter. 
Moreover, in this paper we will show that such a weak monotonicity and the limiter can be easily extended to more general and practical cases including
two-dimensional problems, convection diffusion problems, inflow-outflow boundary conditions, higher order accurate compact finite difference approximations,
compact finite difference schemes with a total-variation-bounded (TVB) limiter \cite{cockburn1994nonlinearly}.
However, the extension to non-uniform grids is highly nontrivial thus will not be discussed. In this paper, we only focus on uniform grids.  
\subsection{The weak monotonicity for diffusion problems}

Although the weak monotonicity holds for arbitrarily high order  finite volume type schemes solving the convection equation \eqref{1dconvection}, 
it no longer holds for a conventional high order linear finite volume scheme or DG scheme even for the simplest heat equation, see the appendix in \cite{zhang2017positivity}.
Toward satisfying the weak monotonicity for the diffusion operator,  an unconventional high order finite volume scheme was constructed in 
\cite{zhang2012maximum2}. Second order accurate DG schemes usually satisfies  the weak monotonicity for the diffusion operator on general meshes \cite{zhang2013maximum}. 
The only previously known high order linear scheme in the literature satisfying the weak monotonicity for scalar diffusion problems is
the third order direct DG (DDG) method with special parameters  \cite{chen2016third},
which is a generalized version of interior penalty DG method.
On the other hand, arbitrarily high order nonlinear positivity-preserving DG schemes for diffusion problems were constructed in
\cite{zhang2017positivity, sun2018discontinuous, Srinivasan2018positivity}.

In this paper we will show that the fourth order accurate compact finite difference  and a few higher order accurate ones are also weakly monotone,
which is another class of linear high order schemes satisfying the weak monotonicity for diffusion problems.

It is straightforward to verify that the backward Euler or Crank-Nicolson method with the fourth order compact finite difference methods
 satisfies a maximum principle for the heat equation but it can be used 
be as a bound-preserving scheme only for linear problems. The method is this paper is explicit thus can be easily applied to nonlinear problems.
It is difficult to generalize the maximum principle to an implicit scheme.
Regarding  positivity-preserving implicit schemes, see \cite{qin2018implicit} for 
a study on weak monotonicity in implicit schemes solving convection equations. See also
\cite{hu2018asymptotic} for a second order accurate implicit and explicit  time discretization for the BGK equation.

\subsection{Contributions and organization of the paper}
Although high order compact finite difference methods have been extensively studied in the literature, e.g., 
\cite{lele1992compact, carpenter1993stability, cockburn1994nonlinearly,tolstykh1994high,  spotz1995high, tolstykh1998performance},
this is the first time that the weak monotonicity in compact finite difference approximations is discussed.
This is also the first time a weak monotonicity property is established for a high order accurate finite difference type scheme. 
The weak monotonicity property suggests it is possible to locally post process the numerical solution without losing conservation by a simple limiter 
to enforce global bounds. Moreover, this approach allows an easy justification of high order accuracy of the constructed bound-preserving scheme. 
 
For extensions to two-dimensional problems, convection diffusion problems, and sixth order and eighth order accurate schemes, 
the discussion about the weak monotonicity in general becomes more complicated since the weighting matrix 
may become a five-diagonal matrix instead of the tridiagonal $\frac16(1,4,1)$ matrix in \eqref{zhang-scheme2}. 
Nonetheless, we demonstrate that the same simple three-point stencil limiter can still be used to enforce bounds because we can
factor the more complicated weighting matrix as a product of a few of tridiagonal $\frac{1}{c+2}(1,c,1)$ matrices with $c\geq 2$.

 The paper is organized as follows: in Section \ref{sec-1d} we demonstrate the main idea for the fourth order accurate scheme 
solving one-dimensional problems with periodic boundary conditions.
 Two-dimensional extensions  are discussed in in Section \ref{sec-2d}. 
Section \ref{sec-highorder} is the extension to 
higher order accurate schemes. Inflow-outflow boundary conditions and Dirichlet boundary conditions are considered in Section \ref{sec-bc}. Numerical tests are given in 
Section \ref{sec-test}.
Section \ref{sec-remark} consists of concluding remarks. 

\section{A fourth order accurate scheme for one-dimensional problems}
\label{sec-1d}
In this section we first show the fourth order compact finite difference  with forward Euler time discretization
satisfies the weak monotonicity. 
Then we discuss how to design a simple limiter to enforce the bounds of point values.  
To eliminate the oscillations, a total variation bounded (TVB) limiter can be used. 
We also show that the TVB limiter does not affect the bound-preserving property of $\bar{u}_i$, 
thus it can be combined with the bound-preserving limiter to ensure the bound-preserving and non-oscillatory solutions for shocks. 
High order time discretizations will be discussed in Section \ref{sec-highordertime}.

\subsection{One-dimensional convection problems}
\label{sec-1dconvection}
Consider a periodic function $f(x)$ on the interval $[0,1]$.
Let $x_i=\frac{i}{N}$ $(i=1,\cdots, N)$ be the uniform grid points on the interval $[0,1]$. 
Let $\mathbf{f}$ be a column vector with numbers $f_1, f_2, \cdots, f_N$ as entries, where $f_i=f(x_i)$.
Let $W_1$, $W_2$, $D_x$ and $D_{xx}$ denote four linear operators as follows:
  {\setlength\arraycolsep{2pt} \[W_1\mathbf{f}=\frac16\begin{pmatrix}
              4 & 1 & & & 1\\
              1 & 4 & 1 & &  \\              
              &  \ddots & \ddots & \ddots & \\
              &  & 1 & 4 &1 \\
              1  & & & 1 & 4 \\
             \end{pmatrix}
             \begin{pmatrix}
 f_1 \\
 f_2\\
 \vdots\\
 f_{N-1}\\
 f_N
 \end{pmatrix}, D_x\mathbf{f} =
 \frac{1}{2}\begin{pmatrix}
              0 & 1 & & &  -1\\
              -1 & 0 & 1 & &  \\              
              &  \ddots & \ddots & \ddots & \\
              &  & -1 & 0 & 1 \\
              1  & & & -1 & 0 \\
             \end{pmatrix} 
  \begin{pmatrix}
 f_1 \\
 f_2\\
 \vdots\\
 f_{N-1}\\
 f_N
 \end{pmatrix},\]
} 

  {\setlength\arraycolsep{2pt} 
 \[W_2\mathbf{f}=\frac{1}{12}\begin{pmatrix}
              10 & 1 & & & 1\\
              1 & 10 & 1 & &  \\              
              &  \ddots & \ddots & \ddots & \\
              &  & 1 & 10 &1 \\
              1  & & & 1 & 10 \\
             \end{pmatrix}
             \begin{pmatrix}
 f_1 \\
 f_2\\
 \vdots\\
 f_{N-1}\\
 f_N
 \end{pmatrix},
 D_{xx}\mathbf{f} =
 \begin{pmatrix}
              -2 & 1 & & &  1\\
              1 & -2 & 1 & &  \\              
              &  \ddots & \ddots & \ddots & \\
              &  & 1 & -2 & 1 \\
              1  & & & 1 & -2 \\
             \end{pmatrix}
  \begin{pmatrix}
 f_1 \\
 f_2\\
 \vdots\\
 f_{N-1}\\
 f_N
 \end{pmatrix}.
 \]}
The fourth order compact finite difference approximation to the first order derivative \eqref{zhang-scheme2}
with periodic assumption for $f(x)$  can be denoted as
$ W_1 \mathbf{f}'= \frac{1}{\Delta x}D_x \mathbf{f}.$
The fourth order compact finite difference approximation to  $f''(x)$ is
$ W_2 \mathbf{f}''= \frac{1}{\Delta x^2} D_{xx} \mathbf{f}.$
The fourth compact finite difference approximations can be explicitly written as 
\[ \mathbf{f}'=  \frac{1}{\Delta x} W_1^{-1} D_x \mathbf{f},\quad \mathbf{f}''=  \frac{1}{\Delta x^2} W_2^{-1} D_{xx} \mathbf{f},\]
where $W_1^{-1}$ and $W_2^{-1}$ are the inverse operators. 
For convenience, by abusing notations we let $W_1^{-1} f_i$ denote the $i$-th entry of the vector $W_1^{-1} \mathbf f$. 

Then the scheme \eqref{zhang-scheme1}  solving the scalar conservation laws \eqref{1dconvection} with periodic boundary conditions on the interval
$[0,1]$ can be written as
$W_1\frac{d}{dt} u_i=-\frac{1}{2\Delta x} [f(u_{i+1})-f(u_{i-1})],$
and
the scheme \eqref{Euler2} is equivalent to
$W_1 u^{n+1}_i=W_1 u^{n}_i-\frac12\lambda [f(u^n_{i+1})-f(u^n_{i-1})].$
As shown in Section \ref{sec-intro-monotonicity}, the scheme \eqref{Euler2} satisfies the weak monotonicity.
\begin{thm}
\label{thm-weakmono-convec}
 Under the CFL constraint $\frac{\Delta t}{\Delta x}\max_u|f'(u)|\leq \frac13,$if $u^n_i\in[m,M]$, then 
 $u^{n+1}$ computed by the scheme \eqref{Euler2} satisfies \eqref{weakmonotonicity}. 
\end{thm}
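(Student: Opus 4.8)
The plan is to turn the identity already displayed in Section~\ref{sec-intro-monotonicity} into a short rigorous argument built around a single auxiliary function. First I would substitute $\bar u_i^n=\frac16(u^n_{i-1}+4u^n_i+u^n_{i+1})$ into \eqref{Euler2} and regroup the right-hand side so that each neighbouring value is paired with its own flux term:
\[
\bar u_i^{n+1}=\tfrac16\bigl[u^n_{i-1}+3\lambda f(u^n_{i-1})\bigr]+\tfrac46\,u^n_i+\tfrac16\bigl[u^n_{i+1}-3\lambda f(u^n_{i+1})\bigr]=:H\bigl(u^n_{i-1},u^n_i,u^n_{i+1}\bigr),
\]
with $\lambda=\Delta t/\Delta x$. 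The theorem then reduces to two elementary properties of the function $H(p,q,r)$ on the cube $[m,M]^3$: (i) $H$ is nondecreasing in each of its three arguments, and (ii) $H$ is consistent, i.e.\ $H(c,c,c)=c$ for every $c\in[m,M]$.

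Property (ii) is immediate: the numerical coefficients satisfy $\frac16+\frac46+\frac16=1$, and the two flux contributions $\frac12\lambda f(c)$ and $-\frac12\lambda f(c)$ cancel. For (i) I would compute the partial derivatives $\partial_p H=\frac16\bigl(1+3\lambda f'(p)\bigr)$, $\partial_r H=\frac16\bigl(1-3\lambda f'(r)\bigr)$, and $\partial_q H=\frac46>0$. The hypothesis $\frac{\Delta t}{\Delta x}\max_u|f'(u)|\le\frac13$, where the maximum is taken over the relevant range $u\in[m,M]$, gives $3\lambda|f'|\le1$ and hence $\partial_p H\ge0$ and $\partial_r H\ge0$ everywhere on $[m,M]^3$.

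To conclude, assuming $u^n_i\in[m,M]$ for all $i$, I would apply monotonicity one coordinate at a time — writing $H(p,q,r)-H(m,m,m)$ as a telescoping sum and invoking the mean value theorem on each one-variable increment, so that every intermediate evaluation point stays inside $[m,M]$, where $f$ is smooth and the sign bounds above are valid — to obtain
\[
m=H(m,m,m)\le H\bigl(u^n_{i-1},u^n_i,u^n_{i+1}\bigr)=\bar u_i^{n+1}\le H(M,M,M)=M,
\]
which is exactly \eqref{weakmonotonicity}. I do not expect a genuine obstacle here: the only point deserving care is precisely this telescoping/mean-value step, since it is what guarantees the CFL bound is used only with $f'$ evaluated inside $[m,M]$, so that no hypothesis on $f$ outside that interval is needed. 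Otherwise the proof is a bookkeeping restatement of the displayed computation in Section~\ref{sec-intro-monotonicity}.
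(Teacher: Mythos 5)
Your proposal is correct and is essentially the paper's own argument: the paper establishes Theorem~\ref{thm-weakmono-convec} by exactly this rewriting of $\bar u_i^{n+1}$ as $H(u^n_{i-1},u^n_i,u^n_{i+1})=\frac16[u^n_{i-1}+3\lambda f(u^n_{i-1})]+\frac46 u^n_i+\frac16[u^n_{i+1}-3\lambda f(u^n_{i+1})]$, noting monotonicity in each argument under the CFL condition and the consistency $H(c,c,c)=c$. Your additional care in restricting the mean-value evaluations of $f'$ to $[m,M]$ is a reasonable tightening of the same computation, not a different route.
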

\subsection{A three-point stencil bound-preserving limiter}
\label{sec-limiter}
In this subsection, 
we consider a more general constraint than  \eqref{weakmonotonicity} 
and we will design a simple limiter to enforce bounds of point values based on it. 
Assume we are given a sequence of periodic point values $u_i$ $(i=1,\cdots, N)$ satisfying 
 \begin{equation}
 m\leq \frac{1}{c+2}( u_{i-1}+ cu_{i}+ u_{i+1})\leq M, \quad i=1,\cdots, N,\quad c\geq 2,
 \label{weakmonotonicity2}
 \end{equation}
 where $u_0:=u_N$, $u_{N+1}:=u_1$ and $c\geq 2$ is a constant.
We have the following results:
 \begin{lem}
 \label{firstlemma}
The constraint \eqref{weakmonotonicity2} implies the following for stencil $\{i-1, i, i+1\}$:
  \begin{enumerate}
  \item[(1)]  $\min\{u_{i-1}, u_i, u_{i+1}\}\leq M, \quad \max\{u_{i-1}, u_i, u_{i+1}\}\geq m.$
  \item[(2)]   If $u_i>M$, then $\frac{(u_i-M)_+}{(M-u_{i-1})_+ +(M-u_{i+1})_+}\leq \frac{1}{c}$.\\
  If $u_i<m$, then $\frac{(m-u_i)_+}{(u_{i-1}-m)_+ +(u_{i+1}-m)_+}\leq \frac{1}{c}.$\\
  Here the subscript $+$ denotes the positive part, i.e., $(a)_+=\max\{a,0\}.$
  \end{enumerate}
\end{lem}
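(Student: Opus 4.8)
The plan is to derive both facts directly from the two-sided inequality \eqref{weakmonotonicity2} by elementary rearrangements, treating the upper-bound and lower-bound cases symmetrically. The only structural idea needed is that the weight $\tfrac{1}{c+2}(1,c,1)$ places $c\geq 2$ times as much mass on the center node $u_i$ as on either neighbor, so a violation at the center must be ``paid for'' by the two neighbors.

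For part (1) I would argue by contradiction. Rewrite the right half of \eqref{weakmonotonicity2} as $u_{i-1}+cu_i+u_{i+1}\leq (c+2)M$. If all three of $u_{i-1},u_i,u_{i+1}$ were strictly greater than $M$, the left-hand side would exceed $(1+c+1)M=(c+2)M$, a contradiction; hence $\min\{u_{i-1},u_i,u_{i+1}\}\leq M$. The claim $\max\{u_{i-1},u_i,u_{i+1}\}\geq m$ follows the same way from the left half of \eqref{weakmonotonicity2}, namely $u_{i-1}+cu_i+u_{i+1}\geq (c+2)m$.

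For part (2), take the case $u_i>M$. I would rearrange $u_{i-1}+cu_i+u_{i+1}\leq (c+2)M$ into
\[
c(u_i-M)\leq (M-u_{i-1})+(M-u_{i+1}).
\]
Since $x\leq (x)_+$ for every real $x$, the right-hand side is bounded above by $(M-u_{i-1})_+ +(M-u_{i+1})_+$, while the left-hand side equals $c(u_i-M)_+>0$ because $u_i>M$. Consequently $(M-u_{i-1})_+ +(M-u_{i+1})_+>0$, so we may divide and obtain $\dfrac{(u_i-M)_+}{(M-u_{i-1})_+ +(M-u_{i+1})_+}\leq \dfrac1c$. The case $u_i<m$ is identical: $u_{i-1}+cu_i+u_{i+1}\geq (c+2)m$ rearranges to $c(m-u_i)\leq (u_{i-1}-m)+(u_{i+1}-m)\leq (u_{i-1}-m)_+ +(u_{i+1}-m)_+$, and the left side is $c(m-u_i)_+>0$, giving the stated bound after division.

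There is no real obstacle here; the one point deserving a word of care is checking that the denominators in part (2) are strictly positive before dividing, but as noted this is automatic since positivity of the numerator forces it. Everything else is a short chain of algebraic rearrangements together with the trivial inequality $x\leq (x)_+$.
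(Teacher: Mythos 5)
Your proof is correct and follows essentially the same route as the paper's: a contradiction argument for part (1) using the weighted sum, and for part (2) the rearrangement $c(u_i-M)\leq (M-u_{i-1})+(M-u_{i+1})\leq (M-u_{i-1})_+ +(M-u_{i+1})_+$ together with the observation that positivity of the numerator forces the denominator to be positive. No differences worth noting.
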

\begin{rmk}
The first statement in Lemma \ref{firstlemma} states that there do not exist three consecutive overshoot points or three consecutive undershoot points. 
But it does not necessarily imply that at least one of three consecutive point values is in the bounds $[m,M]$.
For instance, consider the case for $c=4$ and $N$ is even, 
define $u_i\equiv 1.1$ for all odd $i$ and $u_i\equiv -0.1$ for all even $i$, then $\frac{1}{c+2}( u_{i-1}+ cu_{i}+ u_{i+1})\in[0,1]$ for all $i$ but none of the point values $u_i$ is in $[0,1]$.
\end{rmk}
\begin{rmk}
Lemma \ref{firstlemma} implies that if  $u_i$ is out of the range $[m, M]$, then
we can set $u_i\leftarrow m$ for undershoot (or $u_i\leftarrow M$ for overshoot) without changing the local sum $u_{i-1}+u_i+u_{i+1}$ 
by decreasing (or increasing) its neighbors $u_{i\pm1}$.
\end{rmk}
\begin{proof}

We only discuss the upper bound. The inequalities for the lower bound can be similarly proved.
First, if $u_{i-1}, u_i, u_{i+1}>M$ then $\frac{1}{c+2}(u_{i-1}+ cu_{i}+ u_{i+1})>M$ which is a contradiction to \eqref{weakmonotonicity2}. 
Second, \eqref{weakmonotonicity2} implies $u_{i-1}+cu_i+u_{i+1}\leq (c+2)M$, thus $c(u_i-M)\leq (M-u_{i-1})+(M-u_{i+1})\leq (M-u_{i-1})_+ +(M-u_{i+1})_+$. If $u_i>M$,
 we get $(M-u_{i-1})_+ +(M-u_{i+1})_+>0$. Moreover, $\frac{(u_i-M)_+}{(M-u_{i-1})_+ +(M-u_{i+1})_+}=\frac{u_i-M}{(M-u_{i-1})_+ +(M-u_{i+1})_+}\leq \frac{1}{c}$.
 \end{proof}

For simplicity, we first consider a limiter to enforce only the lower bound without destroying global conservation. 
For  $m=0$, this is a positivity-preserving limiter.

\begin{algorithm}
\caption{A limiter for periodic data $u_i$ to enforce the lower bound.}
\begin{algorithmic}
\REQUIRE The input $u_i$ satisfies $\bar u_i=\frac{1}{c+2} (u_{i-1}+c u_{i}+u_{i+1})\geq m, i=1,\cdots, N$, with $c\geq 2$. Let $u_0$, $u_{N+1}$ denote $u_N$, $u_1$ respectively. 
\ENSURE The output satisfies $v_i\geq m, i=1,\cdots, n$ and $\sum_{i=1}^N v_i=\sum_{i=1}^N u_i$.
\STATE First set $v_i=u_i$, $i=1,\cdots, N$.  Let $v_0$, $v_{N+1}$ denote $v_N$, $v_1$ respectively. 
\FOR{$i=1,\cdots, N$}
\IF{$u_i <m$}
\STATE $v_{i-1}\leftarrow  v_{i-1}-\frac{(u_{i-1}-m)_+}{(u_{i-1}-m)_+ +(u_{i+1}-m)_+}(m-u_i)_+$ 
\STATE $v_{i+1} \leftarrow v_{i+1}-\frac{(u_{i+1}-m)_+}{(u_{i-1}-m)_+ +(u_{i+1}-m)_+}(m-u_i)_+$
\STATE $v_i\leftarrow m$
\ENDIF
\ENDFOR
\end{algorithmic}
\label{alg_limiter}
\end{algorithm}

\begin{rmk}
 Even though a {\bf for} loop is used,  Algorithm \ref{alg_limiter} is a local operation to an undershoot point since only information of two immediate neighboring points of the undershoot point are needed.  Thus it is not a sweeping limiter. 
\end{rmk}

\begin{thm}\label{theorem-lowerbound}
The output of Algorithm \ref{alg_limiter}
satisfies $\sum\limits_{i=1}^N v_i=\sum\limits_{i=1}^N u_i$ and $v_i\geq m$. 
\end{thm}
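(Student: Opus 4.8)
The plan is to verify the two claimed properties of the output $v_i$ separately: first conservation ($\sum_i v_i = \sum_i u_i$), which is immediate from the structure of the updates, and then the lower bound $v_i \geq m$, which requires tracking how the $\textbf{for}$ loop modifies each entry. The key observation I would lean on is Lemma \ref{firstlemma}(2): whenever $u_i < m$, we have $(u_{i-1}-m)_+ + (u_{i+1}-m)_+ > 0$ (so the denominators in Algorithm \ref{alg_limiter} are nonzero and the updates are well-defined), and moreover the total mass $(m-u_i)_+$ that must be redistributed away from index $i$ is at most $\frac{1}{c}\bigl[(u_{i-1}-m)_+ + (u_{i+1}-m)_+\bigr]$.

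First I would dispense with conservation: each time the $\textbf{if}$ branch fires at index $i$, the amount subtracted from $v_{i-1}$ plus the amount subtracted from $v_{i+1}$ equals $(m-u_i)_+ = m - u_i$, which is exactly the amount by which $v_i$ is increased (from $u_i$ to $m$). So every pass through the loop preserves $\sum_i v_i$, and since $v$ is initialized to $u$, the final sum is $\sum_i u_i$. Note that because the coefficients $\frac{(u_{i\pm1}-m)_+}{(u_{i-1}-m)_+ + (u_{i+1}-m)_+}$ are computed from the $\emph{original}$ data $u$, not from the running $v$, this bookkeeping is clean and order-independent in its conservation effect.

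Next, the lower bound. I would argue that after the loop, each $v_j \geq m$. Fix an index $j$. If $u_j < m$, then at step $i = j$ of the loop we set $v_j \leftarrow m$, and $v_j$ is never touched afterward at steps $i = j$ again; it can only be decreased at steps $i = j-1$ or $i = j+1$ — but those steps fire only if $u_{j-1} < m$ or $u_{j+1} < m$ respectively, and when (say) $u_{j+1} < m$ the decrement applied to $v_j$ uses the coefficient with $(u_j - m)_+$ in the numerator, which is $0$ since $u_j < m$. Hence $v_j$ stays at $m$. If instead $u_j \geq m$, then $v_j$ starts at $u_j$ and can be decreased at most twice, at steps $i = j-1$ and $i = j+1$; at step $i = j+1$ (if it fires, i.e. $u_{j+1}<m$) the decrement is $\frac{(u_j-m)_+}{(u_j-m)_+ + (u_{j+2}-m)_+}(m-u_{j+1})_+ = \frac{u_j-m}{(u_j-m)_+ + (u_{j+2}-m)_+}(m-u_{j+1})$, and by Lemma \ref{firstlemma}(2) applied at index $j+1$ this is at most $\frac{1}{c}(u_j-m)$; similarly the decrement at step $i=j-1$ is at most $\frac{1}{c}(u_j-m)$. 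Summing, $v_j \geq (u_j - m) - \frac{2}{c}(u_j-m) + m = \bigl(1-\frac{2}{c}\bigr)(u_j-m) + m \geq m$ since $c \geq 2$.

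The main obstacle — and the point that needs the most care in writing — is that the decrements applied to $v_{j-1}$ and $v_{j+1}$ use the $\emph{unmodified}$ values $u_{j-1}, u_{j+1}$ (and their neighbors) inside the fractions, so one must be precise that the bound from Lemma \ref{firstlemma}(2) governs each individual decrement and that at most two decrements ever hit a single entry (from its two neighbors), with the entry's own step either not firing, firing harmlessly, or having already been reset to $m$. I would state this "at most two decrements, each $\leq \frac{1}{c}(u_j - m)$" observation as the crux, and the condition $c \geq 2$ is exactly what closes the estimate.
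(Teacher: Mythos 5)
Your proposal is correct and follows essentially the same route as the paper's proof: conservation via the per-step bookkeeping that the two decrements sum to $m-u_i$, and the lower bound via the case split on $u_j<m$ versus $u_j\geq m$, using Lemma \ref{firstlemma}(2) to bound each of the at most two decrements by $\frac{1}{c}(u_j-m)_+$ and closing with $c\geq 2$. The observation you flag as the crux — that the fractions are computed from the original $u$ and that an entry with $u_j<m$ is immune to later decrements because $(u_j-m)_+=0$ — is exactly the point the paper also makes explicit.
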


\begin{proof}
First of all, notice that the algorithm only modifies the undershoot points and their immediate neighbors. 

Next we will show the output satisfies $v_i\geq m$ case by case:

\begin{itemize}
\item If $u_i<m$, the $i$-th step in {\bf for} loops sets $v_i=m$.
After the $(i+1)$-th step in {\bf for} loops, we still have $v_i=m$ because $(u_i-m)_+=0$.

\item If $u_i=m$, then $v_i=m$ in the final output because $(u_i-m)_+=0$.
\item If $u_i> m$, then limiter may decrease it if at least one of its neighbors $u_{i-1}$ and $u_{i+1}$ is below $m$:
\begin{eqnarray*}
 v_i &=& u_i- \frac{(u_{i}-m)_+(m-u_{i-1})_+}{(u_{i-2}-m)_+ +(u_{i}-m)_+}-\frac{(u_{i}-m)_+(m-u_{i+1})_+}{(u_{i}-m)_+ +(u_{i+2}-m)_+}\\
 &\geq& u_i-\frac{1}{c}(u_{i}-m)_+-\frac{1}{c}(u_{i}-m)_+>  m,
\end{eqnarray*}
where the inequalities are implied by  Lemma \ref{firstlemma}  and the fact $c\geq 2$.
\end{itemize}

Finally, we need to show the local sum $v_{i-1}+v_i+v_{i+1}$ is not changed during the $i$-th step if $u_i<m$. If $u_i<m$, then after $(i-1)$-th step we still have $v_i=u_i$ because $(u_i-m)_+=0$. Thus in the $i$-th step of {\bf for} loops, the point value at $x_i$ is increased by the amount $m-u_i$, and the point values at $x_{i-1}$ and $x_{i+1}$ are decreased by $\frac{(u_{i-1}-m)_+}{(u_{i-1}-m)_+ +(u_{i+1}-m)_+}(m-u_i)_++\frac{(u_{i+1}-m)_+}{(u_{i-1}-m)_+ +(u_{i+1}-m)_+}(m-u_i)_+=m-u_i$. So $v_{i-1}+v_i+v_{i+1}$ is not changed during the $i$-th step.
Therefore the limiter ensures the output $v_i\geq m$ without changing the global sum. 
\end{proof}

 The limiter described by Algorithm \ref{alg_limiter} is a local three-point stencil limiter in the sense that only undershoots and their neighbors will be modified, 
 which means the limiter has no influence on point values that are neither undershoots nor neighbors to undershoots.
 Obviously a similar procedure can be used to enforce only the upper bound. 
 However, to enforce both the lower bound and the upper bound, the discussion for this three-point stencil limiter is complicated for a saw-tooth profile in which 
 both neighbors of an overshoot point are undershoot points. Instead, we will use a different limiter for the saw-tooth profile. 
 To this end, we need to separate the point values $\{u_i, i=1,\cdots, N\}$ into two classes of subsets consisting of consecutive point values. 
 
 In the following discussion, a {\it set} refers to a set of consecutive point values $u_l, u_{l+1}, u_{l+2}, \cdots, u_{m-1}, u_m$.
For any set $S=\{u_l, u_{l+1}, \cdots, u_{m-1}, u_m\}$, we call the first point value $u_l$ and the last point value $u_m$ as {\it boundary points},
and call the other point values  ${u_{l+1}, \cdots, u_{m-1}}$ as  {\it interior points}.
A set of class I is defined as a set satisfying the following:
\begin{enumerate}
 \item It contains at least four point values.
 \item Both {\it boundary points} are in $[m,M]$ and all {\it interior points} are out of range. 
 \item It contains both undershoot and overshoot points.
\end{enumerate}
Notice that in a set of class I, at least one undershoot point is next to an overshoot point.
For given point values $u_i,i=1,\cdots, N$, 
suppose all the sets of class I are $S_1=\{u_{m_1},u_{m_1+1},\cdots, u_{n_1}\}$, $S_2=\{u_{m_2},\cdots, u_{n_2}\}$, $\cdots$, $S_K=\{u_{m_K},\cdots, u_{n_K}\}$, 
where $m_1<m_2<\cdots< u_{m_K}$.

A set of class II consists of point values between $S_i$ and $S_{i+1}$ and two boundary points $u_{n_i}$ and $u_{m_{i+1}}$. 
Namely they are
$T_{0}=\{u_1,u_2,\cdots,u_{m_1}\}$, $T_{1}=\{u_{n_1},\cdots,u_{m_2}\}$, $T_{2}=\{u_{n_2},\cdots,u_{m_3}\}$, $\cdots$, $T_{K}=\{u_{n_K},\cdots,u_N\}$.
For periodic data $u_i$, we can combine $T_{K}$ and $T_0$ to define $T_K=\{u_{n_K},\cdots,u_{N},u_1,\cdots,u_{m_1}\}$.

In the sets of class I, the undershoot and the overshoot are neighbors. 
In the sets of class II,  the undershoot and the overshoot are separated, i.e., an overshoot is not next to any undershoot.
We remark that the sets of class I are hardly encountered in the numerical tests but we include them in the discussion for the sake of completeness.
When there are no sets of class I, all point values form a single set of class II. 
We will use the same procedure as in Algorithm \ref{alg_limiter}  for $T_i$ and a different limiter for $S_i$ 
to  enforce both the lower bound and the upper bound.

\begin{algorithm}
\caption{A bound-preserving limiter for periodic data $u_i$ satisfying $\bar u_i\in [m, M]$}
\begin{algorithmic}[1]
\REQUIRE the input $u_i$ satisfies $\bar u_i=\frac{1}{c+2}( u_{i-1}+c u_{i}+ u_{i+1})\in [m, M]$, $c\geq 2$. Let $u_0$, $u_{N+1}$ denote $u_N$, $u_1$ respectively. 
\ENSURE the output satisfies $v_i\in[m, M], i=1,\cdots, N$ and $\sum_{i=1}^N v_i=\sum_{i=1}^N u_i$.
\STATE {\bf Step 0}: First set $v_i=u_i$, $i=1,\cdots, N$. Let $v_0$, $v_{N+1}$ denote $v_N$, $v_1$ respectively. 
\STATE {\bf Step I}: Find all the sets of class I $S_1,\cdots,S_K$ (all local saw-tooth profiles) and all the sets of class II $T_{1},\cdots, T_{K}$.
\STATE {\bf Step II}: For each $T_j$ $(j=1,\cdots, K)$, the same limiter as in  Algorithm \ref{alg_limiter} (but for both upper bound and lower bound) is used: 
\FOR{all index $i$ in $T_j$}
\IF{$u_i <m$}
\STATE $v_{i-1}\leftarrow  v_{i-1}-\frac{(u_{i-1}-m)_+}{(u_{i-1}-m)_+ +(u_{i+1}-m)_+}(m-u_i)_+$
\STATE $v_{i+1} \leftarrow v_{i+1}-\frac{(u_{i+1}-m)_+}{(u_{i-1}-m)_+ +(u_{i+1}-m)_+}(m-u_i)_+$
\STATE $v_i\leftarrow m$
\ENDIF
\IF{$u_i >M$}
\STATE $v_{i-1}\leftarrow  v_{i-1}+\frac{(M-u_{i-1})_+}{(M-u_{i-1})_+ +(M-u_{i+1})_+}(u_i-M)_+$
\STATE $v_{i+1} \leftarrow v_{i+1}+\frac{(M-u_{i+1})_+}{(M-u_{i-1})_+ +(M-u_{i+1})_+}(u_i-M)_+$ 
\STATE $v_i\leftarrow M$
\ENDIF
\ENDFOR
\STATE {\bf Step III}: for each saw-tooth profile $S_j=\{u_{m_j},\cdots, u_{n_j}\}$ $(j=1,\cdots, K)$,
 let $N_0$ and $N_1$ be the numbers of undershoot and overshoot points in $S_j$ respectively. 
 \STATE Set $U_j=\sum_{i=m_j}^{n_j }v_i$.
\FOR{$i=m_j+1,\cdots, n_j-1$}
\IF{$u_i >M$}
\STATE $v_i\leftarrow M$.
\ENDIF
\IF{$u_i <m$}
\STATE $v_i\leftarrow m$.
\ENDIF
\ENDFOR

\STATE Set $V_j=N_1M +N_0 m+v_{m_j}+v_{n_j}$.
\STATE Set $A_j=v_{m_j}+v_{n_j}+N_1M-(N_1+2)m$, $B_j=(N_0+2) M-v_{m_j}-v_{n_j}-N_0 m$.
\IF {$V_j-U_j>0$}
\FOR{$i=m_j,\cdots, n_j$}
\STATE $v_i\leftarrow v_i-\frac{v_i-m}{A_j}(V_j-U_j)$
\ENDFOR
\ELSE
\FOR{$i=m_j,\cdots, n_j$}
\STATE $v_i \leftarrow v_i+\frac{M-v_i}{B_j}(U_j-V_j)$
\ENDFOR
\ENDIF
\end{algorithmic}
\label{alg_limiter2}
\end{algorithm}

\begin{thm}\label{1dconvectionthm}
Assume periodic data $u_i (i=1,\cdots, N)$ satisfies
$\bar u_i=\frac{1}{c+2}( u_{i-1}+c u_{i}+ u_{i+1})\in [m, M]$, $c\geq 2$ for all $i=1,\cdots, N$ with $u_0:=u_N$ and $u_{N+1}:=u_1$,
then the output  of Algorithm \ref{alg_limiter2} 
satisfies $\sum_{i=1}^N v_i=\sum_{i=1}^N u_i$ and $v_i\in [m,M],$ $\forall i$. 
\end{thm}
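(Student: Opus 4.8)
The plan is to prove the two assertions — global conservation and the bounds $v_i\in[m,M]$ — separately, following the block decomposition set up in Steps~I--III.

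\emph{Conservation.} Step~0 is the identity. In Step~II each $T_j$ is treated by the (two-sided) limiter of Algorithm~\ref{alg_limiter}, and exactly the bookkeeping in the proof of Theorem~\ref{theorem-lowerbound} shows that processing an out-of-range index $i$ of $T_j$ leaves the local triple $v_{i-1}+v_i+v_{i+1}$ unchanged: such a $v_i$ has not yet been moved when the loop reaches $i$, since the redistribution weights carry a factor $(u_i-m)_+$ or $(M-u_i)_+$ that vanishes there, and the mass removed from $v_i$ is exactly the mass added to its neighbours. Hence $\sum_{i\in T_j}v_i$ is preserved. In Step~III, clipping the interior of $S_j$ changes $\sum_{i\in S_j}v_i$ from $U_j$ to $V_j$, and since after clipping $\sum_{i\in S_j}(v_i-m)=A_j$ and $\sum_{i\in S_j}(M-v_i)=B_j$, the proportional correction restores the sum to $U_j$. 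The blocks $T_j$ and $S_j$ cover all indices and overlap only at the shared endpoints $u_{m_j},u_{n_j}$, and every stage preserves the sum over each block it touches, so $\sum_i v_i=\sum_i u_i$ is maintained throughout.

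\emph{Bounds.} I argue according to which block an index lies in. First, after Step~II every index interior to some $T_j$ lies in $[m,M]$: this is the two-sided analogue of Theorem~\ref{theorem-lowerbound}, whose proof uses the defining feature of a class~II set — no overshoot is adjacent to an undershoot — so that a point in $[m,M]$ can be pushed down (by an undershoot neighbour) at most once and up (by an overshoot neighbour) at most once, and by Lemma~\ref{firstlemma}(2) with $c\ge2$ each push is at most $\tfrac1c\le\tfrac12$ times the point's distance to the violated bound, while an out-of-range point is reset exactly to $m$ or $M$. Since such indices lie in no $S_j$, Step~III never touches them again. It remains to treat the $S_j$. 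After clipping, the interior points of $S_j$ are in $[m,M]$ and the two boundary points already were; writing the correction as $v_i\leftarrow m+(v_i-m)\!\left(1-\tfrac{V_j-U_j}{A_j}\right)$ when $V_j-U_j>0$ and $v_i\leftarrow M-(M-v_i)\!\left(1-\tfrac{U_j-V_j}{B_j}\right)$ when $V_j-U_j\le0$ (with $v_i$ the clipped value), the output stays in $[m,M]$ once we know $0<V_j-U_j\le A_j$, respectively $0<U_j-V_j\le B_j$. Positivity of $A_j,B_j$ is immediate because a class~I set contains at least one overshoot and one undershoot ($N_1\ge1$, $N_0\ge1$), so $A_j\ge N_1(M-m)>0$ and $B_j\ge N_0(M-m)>0$; and since $A_j=V_j-(n_j-m_j+1)m$ and $B_j=(n_j-m_j+1)M-V_j$, both remaining inequalities reduce to the single statement
\[
(n_j-m_j+1)\,m\ \le\ U_j=\sum_{i=m_j}^{n_j}v_i\ \le\ (n_j-m_j+1)\,M .
\]

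This last display is the crux and the step I expect to cost the most work, because it is the only place where the weak monotonicity hypothesis \eqref{weakmonotonicity2} is genuinely needed (Lemma~\ref{firstlemma} alone does not suffice — see the saw-tooth in the Remark following it). Since Step~II left the interior of $S_j$ untouched, $v_i=u_i$ there; setting $p_i=u_i-m$, the hypothesis gives $p_{i-1}+cp_i+p_{i+1}\ge0$ for $i=m_j+1,\dots,n_j-1$. I would then bring in the discrete Green's function: the vector $\alpha=(\alpha_{m_j+1},\dots,\alpha_{n_j-1})$ solving $\alpha_{i-1}+c\alpha_i+\alpha_{i+1}=1$ with $\alpha_{m_j}=\alpha_{n_j}=0$, for which $c\ge2$ gives $\alpha_i\ge0$ and, by a one-line discrete maximum principle, $\alpha_i\le\tfrac1c$. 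Multiplying the $i$th constraint by $\alpha_i$ and summing, the coefficient of $p_k$ becomes $\alpha_{k-1}+c\alpha_k+\alpha_{k+1}$, which is $1$ for $m_j<k<n_j$ and equals $\alpha_{m_j+1}$ (resp.\ $\alpha_{n_j-1}$) at $k=m_j$ (resp.\ $k=n_j$); hence
\[
\sum_{i=m_j}^{n_j}(u_i-m)=\sum_{i=m_j+1}^{n_j-1}\alpha_i\big(p_{i-1}+cp_i+p_{i+1}\big)+(1-\alpha_{m_j+1})p_{m_j}+(1-\alpha_{n_j-1})p_{n_j}\ \ge\ 0 ,
\]
and the upper bound follows the same way with $p_i$ replaced by $M-u_i$. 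Finally I would absorb the (small) adjustments Step~II may make to the shared endpoints $v_{m_j},v_{n_j}$: each such adjustment is at most $\tfrac1c$ times $|u_{m_j}-m|$ or $|u_{n_j}-m|$ by Lemma~\ref{firstlemma}(2), and since $\alpha_{m_j+1},\alpha_{n_j-1}\le\tfrac1c$, combining this with $c\ge2$ gives $(1-\alpha_{m_j+1})(u_{m_j}-m)+(v_{m_j}-u_{m_j})\ge(1-\tfrac2c)(u_{m_j}-m)\ge0$ and similarly at $n_j$, so the estimate survives replacing $u_{m_j},u_{n_j}$ by $v_{m_j},v_{n_j}$; with the symmetric upper bound and the positivity of $A_j,B_j$ this completes the plan.
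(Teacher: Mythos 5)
Your plan is correct, and for most of the argument it coincides with the paper's own proof: the same case analysis via Lemma~\ref{firstlemma}(2) and $c\ge2$ to show {\bf Step II} keeps every $T_j$ inside $[m,M]$ (one caution: a point may be pushed down by \emph{two} undershoot neighbours, not ``at most once'', but since each push is at most $\tfrac1c(u_i-m)\le\tfrac12(u_i-m)$ the total is still controlled); the same reduction of {\bf Step III} to the block-average inequality $\bar N m\le U_j\le \bar N M$; and the same conservation bookkeeping — indeed your rewriting of the Step~III update as $v_i\leftarrow m+(v_i-m)\bigl(1-\tfrac{V_j-U_j}{A_j}\bigr)$ together with $A_j=V_j-\bar N m$, $B_j=\bar N M-V_j$ is a cleaner packaging of the paper's chain of inequalities. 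Where you genuinely diverge is at the crux you flagged. The paper proves $\bar N m\le U_j$ \emph{indirectly}: it observes that one could conceptually run the one-sided limiter of Algorithm~\ref{alg_limiter} on $S_j$ after {\bf Step II}; that hypothetical run preserves $\sum_{S_j}v_i$ and outputs values $\ge m$ (after absorbing the at-most-$\tfrac1c(u_{m_j}-m)$ perturbation of the shared endpoints), so the sum must already have been $\ge\bar N m$. You instead prove the inequality directly by a discrete Green's-function / summation-by-parts identity, which yields a closed-form certificate rather than an existence argument — arguably more transparent, and it isolates exactly where the hypothesis \eqref{weakmonotonicity2} enters. The one assertion on your route that cannot be waved through is $\alpha_i\ge0$: the matrix $T=\mathrm{tridiag}(1,c,1)$ has \emph{positive} off-diagonals, so it is not an M-matrix and $T^{-1}$ has negative entries in general (e.g.\ $\left(\begin{smallmatrix}3&1\\1&3\end{smallmatrix}\right)^{-1}$); the nonnegativity of $T^{-1}\mathbf 1$ is special to the all-ones right-hand side. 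It is true: for $c>2$, with $r=\tfrac{-c+\sqrt{c^2-4}}{2}\in(-1,0)$ the root of $r^2+cr+1=0$ and $k$ interior points, one checks
\[
\alpha_i=\frac{1}{c+2}\left(1-\frac{r^i+r^{k+1-i}}{1+r^{k+1}}\right),
\]
so $\alpha_i\ge0$ reduces to $(1-r^i)(1-r^{k+1-i})\ge0$, which holds since $|r|<1$; the case $c=2$ (double root $r=-1$) follows by the analogous explicit formula or by continuity. Granting this, $\alpha_i\le\tfrac1c$ is indeed one line from $c\alpha_i=1-\alpha_{i-1}-\alpha_{i+1}\le1$, and your endpoint absorption $(1-\alpha_{m_j+1})(u_{m_j}-m)+(v_{m_j}-u_{m_j})\ge(1-\tfrac2c)(u_{m_j}-m)\ge0$ is exactly right. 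So: correct, with a genuinely different (and in my view tighter) proof of the key block-average lemma, provided you supply the short verification of $\alpha_i\ge0$.
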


\begin{proof}
 
 First we show the output $v_i\in [m,M]$. Consider {\bf Step II}, which only modifies the undershoot and overshoot points and their immediate neighbors.
Notice that the operation described by lines 6-8 will not increase the point value of neighbors to an undershoot point thus it will not create new overshoots. Similarly, the operation described by lines 11-13 will not create new undershoots. In other words, no new undershoots (or overshoots) will be created when eliminating overshoots (or undershoots) in {\bf Step II}.

Each interior point $u_i$ in any $T_j$ belongs to one of the following four cases:
\begin{enumerate}
\item $u_i\leq m$ or $u_i\geq M$.
\item $m<u_i<M$ and $u_{i-1}, u_{i+1}\leq M$.
\item $m<u_i<M$ and $u_{i-1}, u_{i+1}\geq m$.
\item $m<u_i<M$ and $u_{i-1}>M, u_{i+1}<m$ (or $u_{i+1}>M, u_{i-1}<m$).
\end{enumerate}
We want to show $v_i\in[m, M]$ after {\bf Step II}.
For the first three cases, by the same arguments as in the proof of Theorem \ref{theorem-lowerbound}, we can easily show that the output point values are in the range $[m,M]$. 
For case (1), after {\bf Step II}, if $u_i\leq m$ then $v_i=m$; if $u_i\geq M$ then $v_i=M$. 
For case (2), $v_i\neq u_i$ only if at least one of $u_{i-1}$ and $u_{i+1}$ is an undershoot. If so, then \begin{eqnarray*}
 v_i &=& u_i- \frac{(u_{i}-m)_+(m-u_{i-1})_+}{(u_{i-2}-m)_+ +(u_{i}-m)_+}-\frac{(u_{i}-m)_+(m-u_{i+1})_+}{(u_{i}-m)_+ +(u_{i+2}-m)_+}\\
 &\geq& u_i-\frac{1}{c}(u_{i}-m)_+-\frac{1}{c}(u_{i}-m)_+>  m.
\end{eqnarray*}
Similarly, for case (3), $v_i\neq u_i$ only if at least one of $u_{i-1}$ and $u_{i+1}$ is an overshoot, and we can show $v_i<M$.

Notice that case (2) and case (3) are not exclusive to each other, which however does not affect the discussion here. When case (2) and case (3) overlap, we have $u_i, u_{i-1}, u_{i+1}\in[m, M]$ thus $v_i=u_i\in [m, M]$ after {\bf Step II}.

For case (4), without loss of generality, we consider the case when $u_{i+1}>M, u_i\in[m,M], u_{i-1}<m$, and 
we need to show that the output $v_i\in[m, M]$. 
By Lemma \ref{firstlemma}, we know that 
Algorithm \ref{alg_limiter2} will  decrease the value at $x_i$ by
at most $\frac{1}{c}(u_{i}-m)$ to eliminate the undershoot at $x_{i-1}$ then 
increase the point value at $x_i$ by at most $\frac{1}{c}(M-u_{i})$ to eliminate the overshoot at $x_{i+1}$. So after {\bf Step II}, 
\begin{eqnarray*}
v_i&\leq& u_i+\frac{1}{c}(M-u_{i})\leq M \quad (\text{because}\quad c\geq 2, u_i<M);\\
v_i&\geq& u_i-\frac{1}{c}(u_{i}-m)\geq m \quad (\text{because}\quad c\geq 2, u_i>m).
\end{eqnarray*}
Thus we have $v_i\in[m,M]$ after {\bf Step II}. 
By the same arguments as in the proof of Theorem \ref{theorem-lowerbound}, we can also easily show the boundary points are in the range $[m, M]$  after {\bf Step II}.
It is straightforward to verify that $\sum_{i=1}^N v_i=\sum_{i=1}^N u_i$ after {\bf Step II} because the operations described by lines 6-8 and lines 11-13 do not change the local sum $v_{i-1}+v_i+v_{i+1}$.

Next we discuss {\bf Step III} in Algorithm \ref{alg_limiter2}. 
 Let $\bar N=2+N_0+N_1=n_j-m_j+1$ be the cardinality of $S_j=\{u_{m_j},\cdots, u_{n_j}\}$.
 
We need to show that the average value in each saw-tooth profile $S_j$
 is in the range $[m, M]$ after {\bf Step II} before {\bf Step III}. Otherwise it is impossible to 
enforce the bounds in $S_j$ without changing the sum in $S_j$. In other words, we need to show $\bar N m \leq U_j=\sum_{v_i\in S_j}v_i\leq \bar N M$.
We will prove the claim by conceptually applying the upper or lower bound limiter Algorithm \ref{alg_limiter} to $S_j$. Consider a boundary point of $S_j$, e.g., $u_{m_j}\in[m,M]$, then during  {\bf Step II} the point value at $x_{m_j}$ can be unchanged, moved down at most $\frac{1}{c}(u_{m_j}-m)$ or moved up at most $\frac{1}{c}(M-u_{m_j})$. 
We first show the average value in $S_j$ after {\bf Step II} is not below $m$:
\begin{itemize}
 \item[(a)] Assume both boundary point values of $S_j$ are unchanged during {\bf Step II}. If applying Algorithm \ref{alg_limiter} to $S_j$ after {\bf Step II}, by the proof of
 Theorem \ref{theorem-lowerbound}, we know that the output values would be greater than or equal to $m$ with the same sum, which implies that 
 $\sum_{v_i\in S_j}v_i\geq \bar N m$. 
\item[(b)] If a boundary point value of $S_j$ is increased during {\bf Step II}, the same discussion as in (a) still holds because an increased boundary value does not affect the discussion for the lower bound.
\item[(c)] If a boundary point value $v_{m_j}$ of $S_j$ is decreased during {\bf Step II}, then with the fact that it is decreased by at most the amount $\frac{1}{c}(u_{m_j}-m)$, the same discussion as in (a) still holds. 
\end{itemize}

 Similarly if applying the upper bound limiter similar to Algorithm \ref{alg_limiter} to $S_j$ after {\bf Step II},
 then by the similar arguments as above,  the output values would be less than or equal to $M$ with the same sum,
 which implies $\sum_{v_i\in S_j}v_i\leq \bar N M$.
 
 Now we can show the output $v_i\in [m,M]$ for each $S_j$ after {\bf Step III}:
  \begin{enumerate}
  \item   Assume $V_j=N_1M +N_0 m+v_{m_j}+v_{n_j}>U_j$ before the {\bf for} loops in {\bf Step III}. Then after {\bf Step III}:
  if $u_i<m$ we get $v_i=m$; 
  if $u_i\geq m$ we have  
  \begin{eqnarray*}
    M\geq &v_i&-\frac{v_i-m}{A_j}(V_j-U_j)\nonumber\\    
    = &v_i&-\frac{v_i-m}{v_{m_j}+v_{n_j}+N_1M-(N_1+2)m}(v_{m_j}+v_{n_j}+N_1M+N_0m-U_j)\nonumber\\
   \geq &v_i& -\frac{v_i-m}{v_{m_j}+v_{n_j}+N_1M-(N_1+2)m}(v_{m_j}+v_{n_j}+N_1M+N_0m-\bar Nm)\nonumber\\
   = &v_i&-(v_i-m)=m.\nonumber
     \end{eqnarray*}
  \item   Assume $V_j=N_1M +N_0 m+v_{m_j}+v_{n_j}\leq U_j$ before the {\bf for} loops in {\bf Step III}. Then after {\bf Step III}: 
  if $u_i>M$ we get $v_i=M$; 
  if $u_i\geq M$ we have  
   \begin{eqnarray*}
    m\leq &v_i&+\frac{M-v_i}{B_j}(U_j-V_j)\nonumber\\
    = &v_i&+\frac{M-v_i}{(N_0+2)M-v_{m_j}-v_{n_j}-N_0m}(U_j-v_{m_j}-v_{n_j}-N_1M-N_0m)\nonumber\\
   \leq &v_i&+\frac{M-v_i}{(N_0+2)M-v_{m_j}-v_{n_j}-N_0m}(\bar N M-v_{m_j}-v_{n_j}-N_1M-N_0m)\nonumber\\
   =&v_i&+(M-v_i)=M.\nonumber
      \end{eqnarray*}
 \end{enumerate}
Thus we have shown all the final output values are in the range $[m,M]$.  

Finally it is straightforward to verify that $\sum_{i=1}^N v_i=\sum_{i=1}^N u_i$.
\end{proof}

The limiters described in Algorithm \ref{alg_limiter} and Algorithm \ref{alg_limiter2} are high order accurate limiters in the following sense.
Assume $u_i (i=1,\cdots, N)$ are high order accurate approximations to point values of a very smooth function $u(x)\in[m, M]$, i.e., $u_i-u(x_i)=\mathcal O(\Delta x^k)$.
For fine enough uniform mesh, the global maximum points are well separated from the global minimum points in $\{u_i, i=1, \cdots, N\}$.
In other words, there is no saw-tooth profile in $\{u_i, i=1, \cdots, N\}$. Thus Algorithm \ref{alg_limiter2} reduces to the three-point stencil limiter for 
smooth profiles on fine resolved meshes. Under these assumptions, the amount which limiter increases/decreases each point value is at most $(u_i-M)_+$ and  $(m-u_i)_+$.
If $(u_i-M)_+>0$, which means $u_i>M\geq u(x_i)$, we have $(u_i-M)_+=O(\Delta x^k)$ because $(u_i-M)_+<u_i-u(x_i)=O(\Delta x^k)$.
Similarly, we get $(m-u_i)_+=O(\Delta x^k)$. Therefore, for point values $u_i$ approximating a smooth function, the limiter 
changes $u_i$ by $O(\Delta x^k)$.

\subsection{A TVB limiter}
The scheme \eqref{Euler2} can be written into a conservation form:
\begin{equation}\label{conservationform}
 \bar{u}_i^{n+1}=\bar{u}_i^n-\frac{\Delta t}{\Delta x}(\hat{f}_{i+\frac{1}{2}}-\hat{f}_{i-\frac{1}{2}}),
\end{equation}
which is suitable for shock calculations and involves a numerical flux 
\begin{equation}\label{flux}
 \hat{f}_{i+\frac{1}{2}}=\frac{1}{2}(f(u^n_{i+1})+f(u^n_i)).
\end{equation}
To achieve nonlinear stability and eliminate oscillations for shocks, a TVB (total variation bounded in the means) limiter was introduced 
for the scheme \eqref{conservationform} in \cite{cockburn1994nonlinearly}.
In this subsection we will show that the bound-preserving property of $\bar{u}_i$ \eqref{weakmonotonicity} still holds for the scheme \eqref{conservationform} with the TVB limiter in  \cite{cockburn1994nonlinearly}.
 Thus we can use both the TVB limiter and the bound-preserving limiter in Algorithm \eqref{alg_limiter2} at the same time.
 
 The compact finite difference scheme with the limiter in \cite{cockburn1994nonlinearly} is 
 \begin{eqnarray}\label{TVBMscheme}
  \bar{u}_i^{n+1}=\bar{u}_i^{n}-\frac{\Delta t}{\Delta x}(\hat{f}^{(m)}_{i+\frac12}-\hat{f}^{(m)}_{i-\frac12}),
 \end{eqnarray}
where the numerical flux $\hat{f}^{(m)}_{i+\frac12}$ is the modified flux approximating \eqref{flux}.

First we write 
$f(u)=f^+(u)+f^-(u)$ with the requirement that 
$\frac{\partial f^+(u)}{\partial u}\geq 0$,  and $\frac{\partial f^-(u)}{\partial u}\leq 0.$
The simplest such splitting is the Lax-Friedrichs splitting
$f^{\pm}(u)=\frac12(f(u)\pm\alpha u), \alpha=\max\limits_{u\in[m, M]}|f'(u)|$. Then we  write the flux $\hat{f}_{i+\frac{1}{2}}$ as 
$
 \hat{f}_{i+\frac12}=\hat{f}^+_{i+\frac12}+\hat{f}^-_{i+\frac12},
$
where $\hat{f}^{\pm}_{i+\frac12}$ are obtained by adding superscripts $\pm$ in \eqref{flux}.
Next we define 
\begin{equation*}
 d\hat{f}^+_{i+\frac12}=\hat{f}^+_{i+\frac12}-f^+(\bar{u}_i), \quad d\hat{f}^-_{i+\frac12}=f^-(\bar{u}_{i+1})-\hat{f}^-_{i+\frac12}.
\end{equation*}
Here $d\hat{f}^{\pm}_{i+\frac12}$ are the differences between the numerical fluxes $\hat{f}^{\pm}_{i+\frac12}$ and the first-order, upwind fluxes $f^+(\bar{u}_i)$ 
and $f^-(\bar{u}_{i+1})$. The limiting is defined by 
\[
 d\hat{f}^{+(m)}_{i+\frac{1}{2}}=\tilde{m}(d\hat{f}^+_{i+\frac12}, \Delta^+f^+(\bar{u}_i), \Delta^+f^+(\bar{u}_{i-1})),\quad 
 d\hat{f}^{-(m)}_{i+\frac{1}{2}}=\tilde{m}(d\hat{f}^-_{i+\frac12}, \Delta^+f^-(\bar{u}_i), \Delta^+f^-(\bar{u}_{i+1})),
\]
where $\Delta^+v_i\equiv v_{i+1}-v_i$ is the usual forward difference operator, and the modified $minmod$ function $\tilde{m}$ is defined by

\begin{equation}
\label{minmod}
\tilde{m}(a_1,\dots,a_k)=\left\{\begin{array}{ll}
 a_1, & \textrm{if $|a_1|\leq p\Delta x^2$,}\\
 m(a_1,\dots,a_k), & \textrm{otherwise,}
\end{array}\right.
\end{equation}
where $p$ is a positive constant independent of $\Delta x$ and $m$ is the  $minmod$ function
\begin{equation*}
m(a_1,\dots,a_k)=\left\{\begin{array}{ll}
 s \min_{1\leq i\leq k}|a_i|, & \textrm{if $sign(a_1)=\cdots=sign(a_k)=s$,}\\
 0, & \textrm{otherwise.}
\end{array}\right.
\end{equation*}\
The limited numerical flux is then defined by
$
 \hat{f}^{+(m)}_{i+\frac12}=f^+(\bar{u}_i)+d\hat{f}^{+(m)}_{i+\frac{1}{2}}, \quad \hat{f}^{-(m)}_{i+\frac12}=f^-(\bar{u}_{i+1})-d\hat{f}^{-(m)}_{i+\frac{1}{2}},
$and 
$
 \hat{f}^{(m)}_{i+\frac12}=\hat{f}^{+(m)}_{i+\frac12}+\hat{f}^{-(m)}_{i+\frac12}.
$
The following result was proved in \cite{cockburn1994nonlinearly}:
\begin{lem}
  For any $n$ and $\Delta t$ such that $0\leq n\Delta t\leq T$, scheme \eqref{TVBMscheme} is TVBM (total variation bounded in the means):
$
  TV(\bar{u}^n)=\sum_{i}|\bar{u}_{i+1}^n-\bar{u}_i^n|\leq C,
 $
where C is independent of $\Delta t$, under the CFL condition 
$
 \max_{u}(\frac{\partial}{\partial u}f^{+}(u)-\frac{\partial}{\partial u}f^{-}(u))\frac{\Delta t}{\Delta x}\leq \frac12.
$
 
\end{lem}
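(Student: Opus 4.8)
The statement is precisely the TVBM property established in \cite{cockburn1994nonlinearly}; the plan is to reproduce that argument, which rests on Harten's lemma for schemes written in incremental form (see, e.g., \cite{leveque1992}). First I would put \eqref{TVBMscheme} into the Harten form
\[
\bar u_i^{n+1}=\bar u_i^{n}+C_{i+\frac12}^{n}\,\Delta^+\bar u_i^{n}-D_{i-\frac12}^{n}\,\Delta^+\bar u_{i-1}^{n},
\]
by rewriting each flux difference $\hat f^{\pm(m)}_{i+\frac12}-\hat f^{\pm(m)}_{i-\frac12}$ through the forward differences $\Delta^+\bar u_j^{n}$. For the positive part, $\hat f^{+(m)}_{i+\frac12}=f^+(\bar u_i^{n})+d\hat f^{+(m)}_{i+\frac12}$, so its contribution splits into $f^+(\bar u_i^{n})-f^+(\bar u_{i-1}^{n})$, which by monotonicity of $f^+$ is a nonnegative multiple (bounded by $\max_u\partial_u f^+$) of $\Delta^+\bar u_{i-1}^{n}$, plus $d\hat f^{+(m)}_{i+\frac12}-d\hat f^{+(m)}_{i-\frac12}$; on the pure--minmod branch $d\hat f^{+(m)}_{i+\frac12}$ is a same-sign, absolute-value-dominated function of $\Delta^+ f^+(\bar u_i^{n})$ and $\Delta^+ f^+(\bar u_{i-1}^{n})$, so it equals $\theta_i\,\Delta^+ f^+(\bar u_i^{n})$ for some $\theta_i\in[0,1]$ (with the usual convention that the increment is $0$ when the forward difference in the denominator vanishes), hence again a bounded multiple of $\Delta^+\bar u_i^{n}$. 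The mirror computation for $f^-$ (nonincreasing) produces the $D$-type coefficients. Collecting terms yields explicit formulas for $C_{i+\frac12}^{n}$ and $D_{i-\frac12}^{n}$ in terms of minmod ratios, $\lambda$, and divided differences of $f^\pm$.

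Second, I would verify the hypotheses of Harten's lemma. Using $\theta_i\in[0,1]$, $\partial_u f^+\ge 0\ge\partial_u f^-$, and the stated CFL condition $\lambda\max_u(\partial_u f^+-\partial_u f^-)\le\tfrac12$, a routine estimate gives $C_{i+\frac12}^{n}\ge0$, $D_{i-\frac12}^{n}\ge0$ and $C_{i+\frac12}^{n}+D_{i+\frac12}^{n}\le1$; Harten's lemma then yields $TV(\bar u^{n+1})\le TV(\bar u^{n})$ for the scheme in which the \emph{unmodified} minmod $m$ replaces $\tilde m$ in \eqref{minmod}.

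Third, I would pass from TVD to TVBM by controlling the modification in $\tilde m$. The functions $m$ and $\tilde m$ differ only when $|a_1|\le p\Delta x^2$, and there $|\tilde m(a_1,\dots)-m(a_1,\dots)|\le 2p\Delta x^2$; hence replacing $m$ by $\tilde m$ perturbs each limited flux $d\hat f^{\pm(m)}_{i+\frac12}$ by at most $2p\Delta x^2$, and therefore perturbs the update $\bar u_i^{n+1}$ by $O(\lambda p\Delta x^2)$ at each interface where the modification is active. Writing $\bar w^{n+1}$ for the pure--minmod update of $\bar u^{n}$ and using $TV(\bar w^{n+1})\le TV(\bar u^{n})$ from Step~2 together with $TV(\bar u+\varepsilon)-TV(\bar u)\le 2\sum_i|\varepsilon_i|$, one obtains $TV(\bar u^{n+1})\le TV(\bar u^{n})+E_n$ with $E_n\le C' N\lambda p\Delta x^2 = O(p\,\Delta t)$, since there are $N=1/\Delta x$ interfaces. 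Telescoping over $n$ steps with $n\Delta t\le T$ gives
\[
TV(\bar u^{n})\le TV(\bar u^{0})+\sum_{k=0}^{n-1}E_k\le TV(\bar u^{0})+O(pT),
\]
i.e.\ the asserted bound with $C=C\bigl(T,p,\max_u|f'|,TV(\bar u^{0})\bigr)$ independent of $\Delta t$.

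The main obstacle is Step~1: extracting the incremental coefficients from the minmod-limited fluxes requires care, since the ratios $d\hat f^{\pm(m)}_{i+\frac12}/\Delta^+ f^\pm(\bar u_j^{n})$ must be shown to lie in $[0,1]$ --- which follows from the same-sign and absolute-value-minimizing properties of minmod plus the convention that the increment vanishes when a forward difference does --- and one must check that the Step~3 perturbation is measured in a norm for which the sum over time steps closes to a $\Delta t$-independent constant. Since this is exactly the analysis of \cite{cockburn1994nonlinearly} with the compact-scheme means $\bar u_i$ playing the role of cell averages, no genuinely new ingredient is needed beyond that identification.
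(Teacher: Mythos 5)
First, note that the paper does not actually prove this lemma: it is quoted from \cite{cockburn1994nonlinearly} (``The following result was proved in\ldots''), so there is no internal argument to compare yours against. Your reconstruction follows the route that reference takes --- Harten's lemma applied to the scheme in incremental form, followed by the standard TVB accounting of the $O(\Delta x^2)$ perturbation caused by replacing $m$ with $\tilde m$ --- and Steps~2 and~3 are sound as sketched: the bound $|\tilde m-m|\le 2p\Delta x^2$ on the branch where they differ, the resulting $O(p\,\Delta t)$ growth of the total variation per step after summing over the $N=1/\Delta x$ interfaces, and the telescoping over $n\Delta t\le T$ are exactly the classical argument.

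There is, however, a genuine sign error in Step~1 that would make the Harten verification fail as written. For the positive flux you write $d\hat f^{+(m)}_{i+\frac12}=\theta_i\,\Delta^+f^+(\bar u^n_i)$ with $\theta_i\in[0,1]$ and read this as ``a bounded multiple of $\Delta^+\bar u^n_i$.'' Since this term enters the update of $\bar u^{n+1}_i$ with the factor $-\lambda$ and $\partial_u f^+\ge0$, it contributes $-\lambda\theta_i\,\partial_uf^+\,\Delta^+\bar u^n_i$, i.e.\ a \emph{nonpositive} candidate for $C_{i+\frac12}$, violating the hypothesis $C_{i+\frac12}\ge0$ of Harten's lemma. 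The correct normalization --- and the very reason the limiter feeds $\Delta^+f^+(\bar u_{i-1})$ into the minmod as its third argument --- is to write $d\hat f^{+(m)}_{i+\frac12}=\sigma_i\,\Delta^+f^+(\bar u^n_{i-1})$ with $\sigma_i\in[0,1]$, so that this correction merges with the upwind difference $f^+(\bar u_i)-f^+(\bar u_{i-1})$ and with $-d\hat f^{+(m)}_{i-\frac12}=-\theta_{i-1}\Delta^+f^+(\bar u^n_{i-1})$ into the single coefficient $D_{i-\frac12}=\lambda\,(1+\sigma_i-\theta_{i-1})\,\frac{\Delta^+f^+(\bar u_{i-1})}{\Delta^+\bar u_{i-1}}\in\bigl[0,\ 2\lambda\max_u\partial_uf^+\bigr]$; the mirror normalization for $f^-$ gives $C_{i+\frac12}\ge0$. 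Because the two divided differences entering $C_{i+\frac12}+D_{i+\frac12}$ are taken over the \emph{same} interval $[\bar u_i,\bar u_{i+1}]$, their sum is the divided difference of $f^+-f^-$ there, and the stated CFL condition then yields $C_{i+\frac12}+D_{i+\frac12}\le 2\lambda\max_u(\partial_uf^+-\partial_uf^-)\le1$. So the real obstacle is not, as you suggest, showing that the minmod ratios lie in $[0,1]$ (they do, with respect to any of the three arguments), but choosing the right argument to divide by; with that correction your proof closes.
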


 Next we show that the TVB scheme still satisfies \eqref{weakmonotonicity}.
 
\begin{thm}
 If $u^n_i\in[m, M]$, then under a suitable CFL condition, the TVB scheme \eqref{TVBMscheme} satisfies
$m\leq \frac{1}{6}(u^{n+1}_{i-1}+4u^{n+1}_{i}+u^{n+1}_{i+1})\leq M.$
 \end{thm}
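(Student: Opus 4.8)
The plan is to mimic the argument that produced \eqref{weakmonotonicity} for the unlimited scheme: write $\bar u_i^{n+1}$ from \eqref{TVBMscheme} as a function $H$ of the means $\bar u_j^n$ and the point values $u_j^n$ occurring in its (now wider, roughly five-point) stencil, show that $H$ is nondecreasing in each argument under a CFL constraint of the same type as in Theorem \ref{thm-weakmono-convec}, and then use that $H$ reproduces any constant state to conclude $m=H(m,\dots,m)\le\bar u_i^{n+1}\le H(M,\dots,M)=M$. Observe first that the hypothesis $u_i^n\in[m,M]$ automatically forces $\bar u_i^n=\frac16(u^n_{i-1}+4u^n_i+u^n_{i+1})\in[m,M]$, being a convex combination, so every quantity appearing on the right-hand side of \eqref{TVBMscheme} already lies in $[m,M]$.

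Next I would put \eqref{TVBMscheme} into incremental (Harten-type) form. Writing $\hat f^{(m)}_{i+\frac12}=f^+(\bar u_i)+f^-(\bar u_{i+1})+d\hat f^{+(m)}_{i+\frac12}-d\hat f^{-(m)}_{i+\frac12}$ and subtracting the analogous expression at $i-\frac12$, the ``first-order part'' $f^+(\bar u_i)-f^+(\bar u_{i-1})$ and $f^-(\bar u_{i+1})-f^-(\bar u_i)$ is treated by the usual monotone-flux reasoning using $\partial_u f^+\ge 0$ and $\partial_u f^-\le 0$. For the limited corrections, the relations $d\hat f^{+(m)}_{i+\frac12}=\tilde m(d\hat f^+_{i+\frac12},\Delta^+f^+(\bar u_i),\Delta^+f^+(\bar u_{i-1}))$ (and its ``$-$'' counterpart) together with the definition \eqref{minmod} of $\tilde m$ tell us that $d\hat f^{+(m)}_{i+\frac12}$ is either (i) the unlimited correction $d\hat f^+_{i+\frac12}=\hat f^+_{i+\frac12}-f^+(\bar u_i)$ itself (the TVB branch $|d\hat f^+_{i+\frac12}|\le p\Delta x^2$), or (ii) a quantity sharing the sign of, and with magnitude at most, each of $\Delta^+f^+(\bar u_i)$ and $\Delta^+f^+(\bar u_{i-1})$, hence expressible as $\theta\,\big(f^+(\bar u_i)-f^+(\bar u_{i-1})\big)$ for some $\theta\in[0,1]$, and also as a similar fraction of $f^+(\bar u_{i+1})-f^+(\bar u_i)$. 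In case (ii) the correction folds into nonnegative weights multiplying differences of means; when case (i) is active at both $i\pm\frac12$ for both ``$\pm$'' fluxes, the telescoping makes $\hat f^{(m)}$ collapse to the original flux \eqref{flux}, so the update reduces to \eqref{conservationform} and Theorem \ref{thm-weakmono-convec} applies verbatim. Assembling all the pieces, one gets $\bar u_i^{n+1}=H(\dots)$ with all first partials nonnegative as soon as $\frac{\Delta t}{\Delta x}\max_u\big(\partial_u f^+(u)-\partial_u f^-(u)\big)$ is below the appropriate constant (the same, up to the usual normalization, as in Theorem \ref{thm-weakmono-convec} and the TVBM estimate above). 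Constant preservation by $H$ then yields the claimed bounds on $\bar u_i^{n+1}$.

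The main obstacle is the bookkeeping around the TVB cutoff. In case (i) the correction $d\hat f^{\pm(m)}$ is the full $\mathcal O(\Delta x)$ high-order term and is moreover written through the point values $u_j^n$ rather than through the means $\bar u_j^n$, so $H$ genuinely depends on both families; one must verify that in every mixed configuration --- case (i) at some of the interfaces around node $i$, case (ii) at the others, independently for the ``$+$'' and ``$-$'' fluxes --- the coefficients multiplying $u^n_{i-1},u^n_i,u^n_{i+1},\dots$ and the means $\bar u^n_{i\pm1},\dots$ all remain nonnegative, and to pin down the sharp CFL constant. This is exactly the delicate estimate carried out for the TVBM property in \cite{cockburn1994nonlinearly}; here it must be re-run with the goal of keeping the means inside $[m,M]$ rather than merely keeping their total variation bounded, which is why one needs to extract a full monotone-function structure from the modified minmod, not just a TV bound.
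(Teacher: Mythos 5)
There is a genuine gap: your proposal identifies the right ingredients (case analysis on the modified minmod, convexity, monotonicity of $u-\alpha\lambda f^{\pm}(u)$) and correctly disposes of the two ``pure'' situations --- all interfaces in the TVB cutoff branch, where the scheme collapses to \eqref{conservationform} and Theorem \ref{thm-weakmono-convec} applies, and all interfaces in the genuine minmod branch, where Harten-type incremental reasoning on the means works --- but the actual content of the theorem lives in the mixed configurations, and you explicitly defer that verification (``one must verify that in every mixed configuration \dots the coefficients \dots all remain nonnegative'') rather than carry it out. This is not mere bookkeeping. The limiter chooses among four outcomes independently at each of the two interfaces and for each of the two split fluxes, and in the cutoff branch the correction $d\hat f^{+}_{i+\frac12}$ is written through point values $u^n_j$ while in the minmod branch it is written through means $\bar u^n_j$; a single global ``monotone function $H$ of points and means'' must therefore allocate portions of the weights in $\bar u^n_i=\frac16(u^n_{i-1}+4u^n_i+u^n_{i+1})$ to the various flux terms in a way that works simultaneously for every one of the many combinations. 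Your sketch provides no mechanism for doing this, and indeed your guess that the CFL constant is ``the same, up to the usual normalization, as in Theorem \ref{thm-weakmono-convec}'' is wrong: the paper needs $\lambda\max_u|(f^{\pm})'(u)|\le\frac1{12}$, a factor of four stronger, and that factor is precisely the price of the decoupling device you are missing.

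The paper's key idea is to split
\[
\bar u_i^{n+1}=\tfrac14\bigl(\bar u_i^n-4\lambda\hat f^{+(m)}_{i+\frac12}\bigr)+\tfrac14\bigl(\bar u_i^n-4\lambda\hat f^{-(m)}_{i+\frac12}\bigr)+\tfrac14\bigl(\bar u_i^n+4\lambda\hat f^{+(m)}_{i-\frac12}\bigr)+\tfrac14\bigl(\bar u_i^n+4\lambda\hat f^{-(m)}_{i-\frac12}\bigr)
\]
and to prove each of the four terms lies in a shifted interval such as $[m-4\lambda f^{+}(m),\,M-4\lambda f^{+}(M)]$; the shifts telescope when the four terms are averaged. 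This decouples the four flux contributions completely, so the minmod case analysis is run on one term at a time with no interaction between interfaces or between the $+$ and $-$ fluxes. Within a single term the analysis is also not the monotone-$H$ argument you propose: in the case $d\hat f^{+(m)}_{i+\frac12}=\Delta^{+}f^{+}(\bar u_i)$ the term becomes $\bar u_i-4\lambda f^{+}(\bar u_{i+1})$, which is \emph{decreasing} in $\bar u_{i+1}$, and the paper instead bounds it by comparison with the unlimited flux using the sign and ordering information $0<\Delta^{+}f^{+}(\bar u_i)\le d\hat f^{+}_{i+\frac12}$ that the minmod guarantees. To complete your proof you would need to either reproduce this convex splitting (or an equivalent allocation of the $\frac16(1,4,1)$ weights) and these comparison arguments, at which point you would essentially be writing the paper's proof.
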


\begin{proof}
Let $\lambda=\frac{\Delta t}{\Delta x}$, then we have
  {\setlength\arraycolsep{2pt}
\begin{eqnarray*} 
 \bar{u}^{n+1}_{i} 
 & = & \bar{u}^n_i-\lambda (\hat{f}^{(m)}_{i+\frac12}-\hat{f}^{(m)}_{i-\frac12}) \\
 & = & \frac14(\bar{u}^n_i-4\lambda\hat{f}_{i+\frac12}^{+(m)})+\frac14(\bar{u}^n_i-4\lambda\hat{f}_{i+\frac12}^{-(m)})
 +\frac14(\bar{u}^n_i+4\lambda\hat{f}_{i-\frac12}^{+(m)})+\frac14(\bar{u}^n_i+4\lambda\hat{f}_{i-\frac12}^{-(m)}) \label{4terms}.
 \end{eqnarray*}
 }
  We will show $\bar{u}^{n+1}_{i}\in[m,M]$ 
  by proving that the four terms satisfy
  \begin{eqnarray*}
  & \bar{u}^n_i-4\lambda\hat{f}_{i+\frac12}^{+(m)}\in[m-4\lambda f^+(m), M-4\lambda f^+(M)],\\
  &  \bar{u}_i-4\lambda\hat{f}_{i+\frac12}^{-(m)}\in[m-4\lambda f^-(m), M-4\lambda f^-(M)],\\
  & \bar{u}^n_i+4\lambda\hat{f}_{i-\frac12}^{+(m)}\in[m+4\lambda f^+(m), M+4\lambda f^+(M)],\\
  &  \bar{u}_i+4\lambda\hat{f}_{i-\frac12}^{-(m)}\in[m+4\lambda f^-(m), M+4\lambda f^-(M)],
   \end{eqnarray*}
    under the CFL condition
    \begin{equation}
     \label{CFLtvb}
\lambda \max_u |f^{(\pm)}(u)|\leq 
 \frac{1}{12}.    \end{equation}
   We only discuss the first term since the proof for the rest is similar. 
    We notice that $u-4\lambda f^+(u)$ and $u-12\lambda f^+(u)$ are monotonically increasing functions of $u$ under the CFL constraint \eqref{CFLtvb},
   thus $u\in[m, M]$ implies $u-4\lambda f^+(u)\in [m-4\lambda f^+(m), M-4\lambda f^+(M)]$ and $ u-12\lambda f^+(u)\in [m-12\lambda f^+(m), M-12\lambda f^+(M)]$. 
   For convenience, we drop the time step $n$, then we have
\[
  \bar{u}_i-4\lambda\hat{f}_{i+\frac12}^{+(m)}  =  \bar{u}_i-4\lambda(f^+(\bar{u}_i)+d\hat{f}^{+(m)}_{i+\frac12}),
 \]
where the value of $d\hat{f}^{+(m)}_{i+\frac12}$ has four possibilities:
  \begin{enumerate}
   \item If $d\hat{f}^{+(m)}_{i+\frac12}=0$, then $$\bar{u}_i-4\lambda\hat{f}_{i+\frac12}^{+(m)}=\bar{u}_i-4\lambda f^+(\bar{u}_i)\in [m-4\lambda f^+(m), M-4\lambda f^+(M)].$$
   \item If $d\hat{f}^{+(m)}_{i+\frac12}=d\hat{f}^+_{i+\frac12}$, then we get 
   \begin{align*}
   \bar{u}_i-4\lambda\hat{f}_{i+\frac12}^{+(m)}&=\frac{1}{6}(u_{i-1}+4u_{i}+u_{i+1})-4\lambda\frac{f^+(u_i)+f^+(u_{i+1})}{2}\\
   &=\frac{1}{6}u_{i-1}+\frac23(u_{i}-3\lambda f^+(u_i))+\frac{1}{6}(u_{i+1}-12\lambda f^+(u_{i+1})).
   \end{align*}
   By the monotonicity of the function $u-12\lambda f^+(u)$ and $u-3\lambda f^+(u)$, we have
\begin{eqnarray*}
&u_{i}-3\lambda f^+(u_i)\in [m-3\lambda f^+(m), M-3\lambda f^+(M)],\\
 &  u_{i+1}-12\lambda f^+(u_{i+1})\in [m-12\lambda f^+(m), M-12\lambda f^+(M)],
  \end{eqnarray*} 
   which imply $\bar{u}_i-4\lambda\hat{f}_{i+\frac12}^{+(m)}\in [m-4\lambda f^+(m), M-4\lambda f^+(M)].$
   \item If $d\hat{f}^{+(m)}_{i+\frac12}=\Delta^+f^+(\bar{u}_i)$, $\bar{u}_i-4\lambda\hat{f}_{i+\frac12}^{+(m)}=\bar{u}_i-4\lambda f^+(\bar{u}_{i+1})$. 
   If $\Delta^+f^+(\bar{u}_i)>0$, $\bar{u}_i-4\lambda f^+(\bar{u}_{i+1})<\bar{u}_i-4\lambda f^+(\bar{u}_i)\leq M-4\lambda f^+(M)$, which implies the upper bound holds. 
   Due to the definition of the $minmod$ function, we can get $0<\Delta^+f^+(\bar{u}_i)<d\hat{f}^+_{i+\frac12}$. 
   Thus, $\hat{f}^+_{i+\frac12}=\frac{f^+(u_i)+f^+(u_{i+1})}{2}=f^+(\bar{u}_i)+d\hat{f}^+_{i+\frac12}>f^+(\bar{u}_i)+\Delta^+f^+(\bar{u}_i)=f^+(\bar{u}_{i+1})$. 
   Then, $\bar{u}_i-4\lambda f^+(\bar{u}_{i+1})>\bar{u}_i-4\lambda\frac{f^+(u_i)+f^+(u_{i+1})}{2}\geq m-4\lambda f^+(m)$, which gives the lower bound. 
   For the case $\Delta^+f^+(\bar{u}_i)<0$, the proof is similar.
   \item If $d\hat{f}^{+(m)}_{i+\frac12}=\Delta^+f^+(\bar{u}_{i-1})$, the proof is the same as the previous case. 
\end{enumerate}

\end{proof}

\subsection{One-dimensional convection diffusion problems}
\label{sec-1dconffusion}
We consider the one-dimensional convection diffusion problems with periodic boundary conditions:
\setlength\arraycolsep{2pt}
$
u_t + f(u)_x = a(u)_{xx},\quad u(x,0) = u_0(x),
$
where $a'(u)\geq 0$. 
Let $\mathbf f^n$ denote the column vector with entries $f(u^n_1),\cdots, f(u^n_N)$. By notations introduced in Section \ref{sec-1dconvection}, the fourth-order compact finite difference with forward Euler  
can be denoted as:
\begin{equation}\label{1dconvectiondiffusionscheme}
 \mathbf{u}^{n+1} =\mathbf{u}^{n}-\frac{\Delta t}{\Delta x}W_1^{-1}D_x \mathbf{f}^{n}+\frac{\Delta t}{\Delta x^2} W_2^{-1}D_{xx}\mathbf{a}^{n}.
\end{equation}
Recall that we have abused the notation by using $W_1 f^n_i$ to denote the $i$-th entry of the vector $W_1 \mathbf f^n$ and we have defined $\bar u_i=W_1  u_i$.
We now define $$\tilde u_i=W_2  u_i.$$
Notice that $W_1$ and $W_2$  are both circulant thus they both can be diagonalized by the discrete Fourier matrix, so $W_1$ and $W_2$ commute. 
Thus we have
\begin{equation*}
 \tilde{\bar{u}}_i=(W_2W_1\mathbf{u})_i=(W_1W_2\mathbf{u})_i=\bar{\tilde{u}}_i.
\end{equation*}
Let $f^n_i=f(u^n_i)$ and $a^n_i=a(u^n_i)$, then the scheme \eqref{1dconvectiondiffusionscheme} can be written as
\[ \bar{\tilde{ u}}^{n+1}_i=\bar{\tilde{ u}}^n_i- \frac{\Delta t}{\Delta x} W_2D_x f^n_i+\frac{\Delta t}{\Delta x^2}W_1D_{xx}a^n_i.\]
\begin{thm}
\label{thm-weakmono-convecdiff}
Under the CFL constraint $\frac{\Delta t}{\Delta x} \max_{u}|f'(u)|\leq\frac16,\frac{\Delta t}{\Delta x^2}\max_{u}a'(u)\leq\frac{5}{24},$ if $u^n_i\in[m, M]$, then the scheme  \eqref{1dconvectiondiffusionscheme} satisfies that 
$ m\leq \bar{\tilde{ u}}^{n+1}_i\leq M.$
\end{thm}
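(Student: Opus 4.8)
The plan is to recast the update for $\bar{\tilde u}_i=(W_1W_2\mathbf u)_i$ as a single application of a five-argument function of the nodal values $u^n_{i-2},\dots,u^n_{i+2}$ that reproduces constants, and then run the same ``$H(\uparrow,\dots,\uparrow)$'' argument used to get Theorem~\ref{thm-weakmono-convec}.

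First I would make the stencil explicit. Set $\lambda=\Delta t/\Delta x$ and $\mu=\Delta t/\Delta x^2$. All operators in play are circulant, hence commute, and one checks the elementary identities $W_1=I+\tfrac16D_{xx}$ and $W_2=I+\tfrac1{12}D_{xx}$ (with $D_{xx}$ the circulant $(1,-2,1)$), so that $W_1W_2=I+\tfrac14D_{xx}+\tfrac1{72}D_{xx}^2$. Carrying out the (commuting) products $W_2D_x$ and $W_1D_{xx}$ as well, the scheme $\bar{\tilde u}^{\,n+1}_i=\bar{\tilde u}^{\,n}_i-\lambda\,W_2D_xf^n_i+\mu\,W_1D_{xx}a^n_i$ becomes
\[
\bar{\tilde u}^{\,n+1}_i=\sum_{k=-2}^{2}\Bigl(\alpha_k\,u^n_{i+k}+\lambda\beta_k\,f(u^n_{i+k})+\mu\gamma_k\,a(u^n_{i+k})\Bigr)=:H\bigl(u^n_{i-2},\dots,u^n_{i+2}\bigr),
\]
with $(\alpha_{-2},\dots,\alpha_2)=\tfrac1{72}(1,14,42,14,1)$, $(\beta_{-2},\dots,\beta_2)=\tfrac1{24}(1,10,0,-10,-1)$, and $(\gamma_{-2},\dots,\gamma_2)=\tfrac16(1,2,-6,2,1)$. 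Since $\sum_k\alpha_k=1$ and $\sum_k\beta_k=\sum_k\gamma_k=0$, the map $H$ reproduces constants: $H(v,\dots,v)=v$.

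Next I would show $H$ is nondecreasing in each argument under the stated CFL, i.e.\ $\partial H/\partial u_{i+k}=\alpha_k+\lambda\beta_k f'(u^n_{i+k})+\mu\gamma_k a'(u^n_{i+k})\ge0$ for $k=-2,\dots,2$. For $k=\pm1,\pm2$ the diffusion weight $\gamma_k>0$, so (using $a'\ge0$) it suffices that $\alpha_k\ge\lambda|\beta_k|\max_u|f'(u)|$; the binding instance is $k=\pm2$, which needs $\lambda\max_u|f'|\le\alpha_{\pm2}/|\beta_{\pm2}|=\tfrac13$, comfortably implied by $\lambda\max_u|f'|\le\tfrac16$. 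For $k=0$ one has $\beta_0=0$ and $\gamma_0=-1$, so nonnegativity reduces to $\alpha_0=\tfrac7{12}\ge\mu\max_u a'(u)$, which holds under $\mu\max_u a'\le\tfrac5{24}$. Equivalently, one can split every $\alpha_k$ into a ``convection share'' and a ``diffusion share,'' prove the convection block monotone under the $f$-CFL and the diffusion block monotone under the $a$-CFL, and add the two, mirroring the derivation in Section~\ref{sec-intro-monotonicity} that precedes Theorem~\ref{thm-weakmono-convec}.

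Finally, if $u^n_j\in[m,M]$ for every $j$, then monotonicity of $H$ together with $H(m,\dots,m)=m$ and $H(M,\dots,M)=M$ gives $m\le\bar{\tilde u}^{\,n+1}_i=H(u^n_{i-2},\dots,u^n_{i+2})\le M$, which is the assertion. The main obstacle is the bookkeeping in the first two steps: unlike the tridiagonal convection case, $W_1W_2$ has a five-point stencil whose center weight ($\tfrac7{12}$) has been thinned by the outer diffusive factor while the near-neighbor convective weights ($\pm\tfrac{10}{24}\lambda$) are comparatively large, so one must track the coefficients carefully to check that each $\alpha_k$ still dominates its $f'$/$a'$ contributions; the delicate balances are $k=0$ against the diffusion term and $k=\pm2$ against the convection term, while everything else is routine algebra. (This pentadiagonal structure is also the reason the bound-preserving limiter later needs the tridiagonal factorization of the weighting matrix; the present bound itself, however, uses only the monotonicity of $H$.)
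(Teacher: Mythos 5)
Your proof is correct: the stencil weights you compute are right ($W_1W_2=\frac1{72}(1,14,42,14,1)$, the convective weights $\beta=\frac1{24}(1,10,0,-10,-1)$ coming from $-\lambda W_2D_x$, and $W_1D_{xx}=\frac16(1,2,-6,2,1)$), the weights sum correctly so that $H$ reproduces constants, and the sign analysis of each partial derivative is sound. But your route is genuinely different from the paper's. The paper never expands the pentadiagonal stencil; it writes $\mathbf u^{n+1}=\frac12(\mathbf u^n-2\lambda W_1^{-1}D_x\mathbf f^n)+\frac12(\mathbf u^n+2\mu W_2^{-1}D_{xx}\mathbf a^n)$, applies $W_2W_1$ to obtain $\bar{\tilde u}^{\,n+1}_i=\frac12 W_2\bigl(\bar u^n_i-2\lambda D_xf^n_i\bigr)+\frac12 W_1\bigl(\tilde u^n_i+2\mu D_{xx}a^n_i\bigr)$, invokes Theorem~\ref{thm-weakmono-convec} with $2\lambda$ in place of $\lambda$ for the first bracket (whence the $\tfrac16$), checks a three-point monotone decomposition $\bigl(\tfrac56u_i-4\mu a_i\bigr)+\bigl(\tfrac1{12}u_{i\pm1}+2\mu a_{i\pm1}\bigr)$ for the second (whence the $\tfrac5{24}$), and concludes because $W_1$ and $W_2$ are averaging operators that preserve $[m,M]$. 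That convex-splitting argument keeps all algebra tridiagonal and reuses the convection theorem, which is exactly why the stated CFL constants are half of those for the pure convection and pure diffusion cases. Your direct verification costs more bookkeeping but buys a sharper conclusion: since the binding constraints in your stencil are $\lambda\max_u|f'|\le\alpha_{\pm2}/|\beta_{\pm2}|=\tfrac13$ and $\mu\max_u a'\le\alpha_0/|\gamma_0|=\tfrac7{12}$ (there is no interaction between the $f'$ and $a'$ terms, as you note), the weak monotonicity actually holds under a CFL roughly twice as generous as the one in the statement; the stated constants are an artifact of the splitting rather than intrinsic to the scheme. Either argument proves the theorem as stated.
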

\begin{proof}
Let $\lambda=\frac{\Delta t}{\Delta x}$  and $\mu=\frac{\Delta t}{\Delta x^2}$. We can  rewrite the scheme \eqref{1dconvectiondiffusionscheme} as
$$\mathbf{u}^{n+1}=\frac12(\mathbf{u}^{n}-2\lambda W_1^{-1}D_x \mathbf{f}^{n})+\frac12(\mathbf{u}^{n}+2\mu W_2^{-1}D_{xx}\mathbf{a}^{n}),$$
$$W_2W_1\mathbf{u}^{n+1}=\frac12W_2(W_1\mathbf{u}^{n}-2\lambda D_x \mathbf{f}^{n})+\frac12W_1(W_2\mathbf{u}^{n}+2\mu D_{xx}\mathbf{a}^{n}),$$
$$\bar{\tilde{ u}}^{n+1}_i=\frac12W_2(\bar{{ u}}^n_i-2\lambda D_x f^{n}_i)+\frac12W_1({\tilde{ u}}^n_i+2\mu D_{xx}a^{n}_i).$$
By Theorem \ref{thm-weakmono-convec}, we have $\bar{{ u}}^n_i-2\lambda D_x f^{n}_i\in [m, M]$.
We also have
\begin{align*}
&{\tilde{ u}}^n_i+2\mu D_{xx}{a}^{n}_i=\frac{1}{12}(u^n_{-1}+10u^n_i+u^n_{i+1})+2 \mu(a^n_{i-1}-2a^n_{i}+a^n_{i+1})\\
=& \left(\frac{5}{6}u^n_i-4\mu a^n_i\right)+
\left(\frac{1}{12}u^n_{i-1}+2\mu a^n_{i-1}\right)+\left(\frac{1}{12}u^n_{i+1}+2\mu a^n_{i+1}\right).
\end{align*}
Due to monotonicity under the CFL constraint and the assumption $a'(u)\geq 0$, we get ${\tilde{ u}}^n_i+2\mu D_{xx}{a}^{n}_i\in[m, M]$.
Thus we get $\bar{\tilde{ u}}^{n+1}_i\in[m, M]$ since it is a convex combination of $\bar{{ u}}^n_i-2\lambda D_x f^{n}_i$ and 
${\tilde{ u}}^n_i+2\mu D_{xx}a^{n}_i$.
\end{proof}

Given point values $u_i$ satisfying $\bar{\tilde{u}}_i\in[m,M]$ for any $i$, Lemma \ref{firstlemma} no longer holds
since $\bar{\tilde{u}}_i$ has a five-point  stencil.
However, the same three-point stencil limiter in Algorithm \ref{alg_limiter2} can still be used to enforce the lower and upper bounds.
Given  $\bar{\tilde{u}}_i=W_2 W_1 u_i$ $i=1,\cdots, N$, conceptually we can obtain the point values $u_i$ by first computing $\bar u_i=W_2^{-1}\bar{\tilde{u}}_i$ 
then computing $u_i=W_1^{-1}\bar{{u}}_i$. 
Thus we can apply the limiter in Algorithm \ref{alg_limiter2} twice to enforce $u_i\in[m, M]$:
\begin{enumerate}
 \item Given  $\bar{\tilde{u}}_i\in [m, M]$, compute $\bar u_i=W_2^{-1}\bar{\tilde{u}}_i$ which are not necessarily in the range $[m, M]$. 
 Then apply the limiter in Algorithm \ref{alg_limiter2} to $\bar u_i, i=1,\cdots, N$. Let $\bar v_i$ denote the output of the limiter. 
 Since we have 
 \[\bar{\tilde{u}}_i={\tilde{\bar u}}_i=\frac{1}{c+2}(\bar u_{i-1}+c \bar u_{i}+\bar u_{i+1}), \quad c=10,\]
 all discussions in Section \ref{sec-limiter} are still valid, thus we have $\bar v_i\in [m, M]$.
\item Compute $u_i=W_1^{-1}\bar{v}_i$. Apply the limiter in Algorithm \ref{alg_limiter2} to $u_i, i=1,\cdots, N$.
Let $v_i$ denote the output of the limiter. Then we have $v_i\in [m, M]$.
\end{enumerate}

\subsection{High order time discretizations}
\label{sec-highordertime}
For high order time discretizations, we can use strong stability preserving (SSP) Runge-Kutta and multistep methods, which are
convex combinations of formal forward Euler steps. Thus if using the limiter in Algorithm \ref{alg_limiter2} for fourth order compact finite difference schemes considered
in this section on each stage in a SSP Runge-Kutta method or each time step in a SSP multistep method, the bound-preserving property still holds. 

In the numerical tests, we will use a fourth order SSP multistep method and a fourth order SSP Runge-Kutta method \cite{gottlieb2011strong}.
Now consider solving $u_t = F (u)$. 
The SSP coefficient $C$ for a SSP time discretization is a constant so that the high order SSP time
discretization is stable in a norm or a semi-norm under the time step restriction $\Delta t\leq C \Delta t_0$, 
if under the time step restriction $\Delta t\leq \Delta t_0$ the forward Euler is stable in the same norm or semi-norm. 
The fourth order SSP Multistep method (with SSP coefficient $C_{ms}=0.1648$) and 
the fourth order SSP Runge-Kutta method  (with SSP coefficient $C_{rk}=1.508$)  will be used in the numerical tests. See \cite{gottlieb2011strong} for their definitions.

% The fourth order SSP Multistep method (with SSP coefficient $C_{ms}=0.1648$) is given by
% $u^{n+1}=\sum_{i=1}^6\alpha_i(u^{n-1+i}+\Delta t\frac{\beta_i}{\alpha_i}F(u^{n-1+i})),$
% for
% \begin{eqnarray*}
% \alpha_1&=&0.342460855717007, \alpha_4=0.191798259434736,\nonumber\\
% \alpha_5&=&0.093562124939008,\alpha_6=0.372178759909247,\nonumber\\
% \beta_1&=&2.078553105578060,
% \beta_4=1.164112222279710,
% \beta_5=0.567871749748709,
% \end{eqnarray*}
% where all other $\alpha_i$ and $\beta_i$ are zero. 
% The fourth order SSP Runge-Kutta method  (with SSP coefficient $C_{rk}=1.508$)  is given by
% {\setlength\arraycolsep{2pt}
% \begin{eqnarray*}
%  u^{(1)}&=&u^n+0.391752226571890\Delta tF(u^n)\nonumber\\
%  u^{(2)}&=&0.444370493651235u^n+0.555629506348765u^{(1)}+0.368410593050371\Delta tF(u^{(1)}),\nonumber\\
%  u^{(3)}&=&0.620101851488403u^n+0.379898148511597u^{(2)}+0.251891774271694\Delta tF(u^{(2)}),\nonumber\\
%  u^{(4)}&=&0.178079954393132u^n+0.821920045606868u^{(3)}+0.544974750228521\Delta tF(u^{(3)}),\nonumber\\
%  u^{n+1}&=&0.517231671970585u^{(2)}+0.096059710526147u^{(3)}+0.063692468666290\Delta tF(u^{(3)}),\nonumber\\
%  &+&0.386708617503269u^{(4)}+ 0.226007483236906\Delta tF(u^{(4)}).
% \end{eqnarray*}}

In Section \ref{sec-limiter} we have shown that  the limiters in Algorithm \ref{alg_limiter} and Algorithm \ref{alg_limiter2} are high order accurate
provided $u_i$ are high order accurate approximations to a smooth function $u(x)\in[m, M]$. This assumption holds for the numerical solution in a multistep method
in each time step, but it is no longer true for  inner stages in the Runge-Kutta method. So only SSP multistep methods
with  the limiter Algorithm \ref{alg_limiter2} are genuinely high order accurate schemes. For SSP Runge-Kutta methods,
using the bound-preserving limiter for compact finite difference schemes might result in an order reduction. The order reduction for bound-preserving limiters for finite volume
and DG schemes with Runge-Kutta methods was pointed out in \cite{zhang2010maximum} due to the same reason. However, such an order reduction in compact finite difference schemes 
is more prominent, as we will see in the numerical tests.

\section{Extensions to two-dimensional problems}
\label{sec-2d}
In this section we consider initial value problems on a square $[0,1]\times [0,1]$ with periodic boundary conditions.  
Let $(x_{i},y_j)=(\frac{i}{N_x},\frac{j}{N_y})$ $(i=1,\cdots, N_x,j=1, \cdots, N_y)$ be the uniform grid points on the domain $[0,1]\times[0,1]$. 
For a periodic function $f(x,y)$  on $[0,1]\times[0,1]$,
let $\mathbf{f}$ be a matrix  of size $N_x\times N_y$ with entries $f_{ij}$ representing point values $f(u_{ij})$.
We first define two linear operators $W_{1x}$ and $W_{1y}$  from $\mathbb R^{N_x\times N_y}$ to $\mathbb R^{N_x\times N_y}$:
\[W_{1x}\mathbf f=\frac16\begin{pmatrix}
              4 & 1 & & & 1\\
              1 & 4 & 1 & &  \\              
              &  \ddots & \ddots & \ddots & \\
              &  & 1 & 4 &1 \\
              1  & & & 1 & 4 \\
             \end{pmatrix}_{N_x\times N_x}
             \begin{pmatrix}
              f_{11} & f_{12} & \cdots & f_{1, N_y}\\
              f_{21} & f_{22} & \cdots & f_{2, N_y} \\              
              \vdots &  \vdots & \ddots  & \vdots\\
              f_{N_x-1,1}& f_{N_x-1,2} &\cdots  & f_{N_x-1,N_y} \\
              f_{N_x,1}  & f_{N_x,2}&\cdots & f_{N_x,N_y}  \\
             \end{pmatrix},\]
\[W_{1y}\mathbf f=
             \begin{pmatrix}
              f_{11} & f_{12} & \cdots & f_{1, N_y}\\
              f_{21} & f_{22} & \cdots & f_{2, N_y} \\              
              \vdots &  \vdots & \ddots  & \vdots\\
              f_{N_x-1,1}& f_{N_x-1,2} &\cdots  & f_{N_x-1,N_y} \\
              f_{N_x,1}  & f_{N_x,2}&\cdots & f_{N_x,N_y}  \\
             \end{pmatrix}\frac16\begin{pmatrix}
              4 & 1 & & & 1\\
              1 & 4 & 1 & &  \\              
              &  \ddots & \ddots & \ddots & \\
              &  & 1 & 4 &1 \\
              1  & & & 1 & 4 \\
             \end{pmatrix}_{N_y\times N_y}.\]             
We can define $W_{2x}$, $W_{2y}$, $D_x$, $D_y$, $W_{2x}$ and $W_{2y}$ similarly such that the subscript $x$ denotes the multiplication of the corresponding matrix from the left for 
the $x$-index and  the subscript $y$ denotes the multiplication of the corresponding matrix from the right for 
the $y$-index. We abuse the notations by using $W_{1x} f_{ij}$ to denote the $(i,j)$ entry of $W_{1x} \mathbf f$.  
We only discuss the forward Euler from now on since the discussion for high order SSP time discretizations are the same as in
Section \ref{sec-highordertime}.

\subsection{Two-dimensional convection equations}
\label{sec-2dconvec}
Consider solving the two-dimensional convection equation:
$u_t+f(u)_x+g(u)_y =0,\quad u(x,y,0)= u_0(x,y).$
By the our notations, the fourth order compact scheme with the forward Euler time discretization can be denoted as: 
\begin{equation}\label{2dscheme}
u^{n+1}_{ij}=u^n_{ij}-\frac{\Delta t}{\Delta x}W_{1x}^{-1}D_xf^n_{ij}-\frac{\Delta t}{\Delta y}W_{1y}^{-1}D_yg^n_{ij}.
\end{equation}
We define $\bar{\mathbf{u}}^{n}=W_{1x}W_{1y}\mathbf{u}^n$,
then by applying  $W_{1y}W_{1x}$ to both sides, \eqref{2dscheme} becomes 
 \begin{equation}\label{2dscheme2}
\bar{u}^{n+1}_{ij}=\bar{u}^n_{ij}-\frac{\Delta t}{\Delta x}W_{1y}D_xf^n_{ij}-\frac{\Delta t}{\Delta y}W_{1x}D_yg^n_{ij}.
\end{equation}
\begin{thm}
\label{thm-2dconvec}
 Under the CFL constraint 
 \begin{equation}\label{2dconveccfl}
  \frac{\Delta t}{\Delta x}\max_u|f'(u)|+\frac{\Delta t}{\Delta y}\max_u|g'(u)|\leq \frac13,
 \end{equation}
if ${u}^n_{ij}\in [m, M]$, then the scheme \eqref{2dscheme2} satisfies $\bar{u}^{n+1}_{ij}\in[m, M]$. 
\end{thm}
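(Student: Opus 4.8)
The plan is to apply the operator $W_{1x}W_{1y}$ to \eqref{2dscheme}, obtaining \eqref{2dscheme2}, and then to exhibit $\bar u^{n+1}_{ij}$ as a convex combination of two quantities each already known to lie in $[m,M]$ --- one built from the $x$-direction and one from the $y$-direction --- in the same spirit as the proof of Theorem~\ref{thm-weakmono-convecdiff}. Since $W_{1x}$ acts by left multiplication on the $x$-index and $W_{1y}$ by right multiplication on the $y$-index, the two operators commute; combined with linearity, this lets me write, for any $\theta\in(0,1)$,
\begin{equation*}
\bar u^{n+1}_{ij}=\theta\,W_{1y}\Big(W_{1x}u^n_{ij}-\tfrac{\lambda_x}{\theta}D_x f^n_{ij}\Big)+(1-\theta)\,W_{1x}\Big(W_{1y}u^n_{ij}-\tfrac{\lambda_y}{1-\theta}D_y g^n_{ij}\Big),
\end{equation*}
where $\lambda_x=\Delta t/\Delta x$ and $\lambda_y=\Delta t/\Delta y$.

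I would then choose $\theta$ in the interval $[\,3\lambda_x\max_u|f'(u)|,\ 1-3\lambda_y\max_u|g'(u)|\,]$, which is nonempty precisely because the inequality $3\lambda_x\max_u|f'(u)|\leq 1-3\lambda_y\max_u|g'(u)|$ is exactly the CFL constraint \eqref{2dconveccfl}. With this choice, for each fixed $j$ the expression $W_{1x}u^n_{ij}-\tfrac{\lambda_x}{\theta}D_x f^n_{ij}$, viewed as a function of $i$, is exactly the output of the one-dimensional forward Euler scheme \eqref{Euler2} applied to the periodic data $u^n_{\cdot,j}\in[m,M]$ with CFL ratio $\lambda_x/\theta$ satisfying $\tfrac{\lambda_x}{\theta}\max_u|f'(u)|\leq\tfrac13$; hence Theorem~\ref{thm-weakmono-convec} gives $W_{1x}u^n_{ij}-\tfrac{\lambda_x}{\theta}D_x f^n_{ij}\in[m,M]$ for all $i,j$. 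The operator $W_{1y}$ replaces each entry by a convex combination of itself and its two $y$-neighbors with weights $(\tfrac16,\tfrac46,\tfrac16)$, so it maps $[m,M]$-valued arrays to $[m,M]$-valued arrays, and the first bracketed quantity lies in $[m,M]$. The second bracketed quantity is handled identically with the roles of $x$ and $y$ exchanged, using $\tfrac{\lambda_y}{1-\theta}\max_u|g'(u)|\leq\tfrac13$. Since $\theta\in[0,1]$, $\bar u^{n+1}_{ij}$ is a convex combination of two numbers in $[m,M]$, and the theorem follows.

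An equivalent, more direct route is to expand \eqref{2dscheme2} as $\bar u^{n+1}_{ij}=H\big(u^n_{i+p,j+q}:p,q\in\{-1,0,1\}\big)$: the weighting part $W_{1x}W_{1y}u^n_{ij}$ contributes the nonnegative weights $\tfrac{c_pc_q}{36}$ with $(c_{-1},c_0,c_1)=(1,4,1)$, which sum to $1$, and the coefficient of $f(u^n_{i+p,j+q})$ coming from $-\lambda_x W_{1y}D_x f^n_{ij}$ is in absolute value at most $3\lambda_x\cdot\tfrac{c_pc_q}{36}$, and symmetrically for $g$ and $\lambda_y$. Then $\partial H/\partial u^n_{i+p,j+q}\geq \tfrac{c_pc_q}{36}\big(1-3\lambda_x\max_u|f'(u)|-3\lambda_y\max_u|g'(u)|\big)\geq 0$ under \eqref{2dconveccfl}, and since $D_x,D_y$ annihilate constants while $W_{1x}W_{1y}$ reproduces them, $H(v,\dots,v)=v$; therefore $m=H(m,\dots,m)\leq\bar u^{n+1}_{ij}\leq H(M,\dots,M)=M$.

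The step that needs the most care is the quantitative one rather than anything conceptual: in the first approach, checking that a splitting parameter $\theta$ exists exactly when \eqref{2dconveccfl} holds, together with the degenerate cases $\max_u|f'(u)|=0$ or $\max_u|g'(u)|=0$ where $\theta$ may simply be taken as $0$ or $1$; in the second approach, verifying the uniform bounds $3\lambda_x$ and $3\lambda_y$ for the flux coefficients relative to the nine weighting weights, since it is precisely these uniform bounds that make the CFL additive rather than of ``max'' type.
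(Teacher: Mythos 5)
Your proposal is correct, and your second (``more direct'') route is essentially the paper's own proof: the paper expands $\bar u^{n+1}_{ij}$ as a $3\times 3$ stencil combination with weighting matrix $\frac{1}{36}(1,4,1)\otimes(1,4,1)$ and flux matrices $-\frac{\lambda_1}{12}$, $-\frac{\lambda_2}{12}$ times the corresponding difference stencils, and then invokes monotonicity of the resulting function $H$ under \eqref{2dconveccfl} together with $H(v,\dots,v)=v$. Your explicit bound $|\,\text{flux coefficient}\,|\le 3\lambda_x\frac{c_pc_q}{36}$ (and symmetrically in $y$) is exactly the computation that makes the paper's ``obviously monotonically increasing'' claim rigorous with the additive CFL, so this part matches the paper line for line.

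Your first route---writing $W_{1y}W_{1x}u^n=\theta\,W_{1y}W_{1x}u^n+(1-\theta)\,W_{1x}W_{1y}u^n$ and absorbing each flux term into one factor so that Theorem~\ref{thm-weakmono-convec} applies slice by slice---is a genuinely different and equally valid packaging. It is the same convex-splitting device the paper uses for the convection--diffusion theorems, but there the paper fixes $\theta=\frac12$, which forces the more restrictive ``each term separately'' CFL; your observation that letting $\theta$ range over $[\,3\lambda_x\max|f'|,\,1-3\lambda_y\max|g'|\,]$ recovers precisely the additive constraint \eqref{2dconveccfl} is a nice point, and you correctly flag the degenerate endpoints (where $D_xf$ or $D_yg$ vanishes identically so the corresponding term can be dropped). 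Either route proves the theorem; no gaps.
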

\begin{proof}
For convenience, we drop the time step $n$ in $u^n_{ij}$, $f^n_{ij}$, and introduce:
\[ U=\begin{pmatrix*}[l]
  u_{i-1, j+1} &u_{i, j+1} &u_{i+1, j+1} \\
  u_{i-1, j} &u_{i, j} &u_{i+1, j} \\
  u_{i-1, j-1} &u_{i, j-1} &u_{i+1, j-1}
 \end{pmatrix*}, \quad  F=\begin{pmatrix*}[l]
  f_{i-1, j+1} &f_{i, j+1} &f_{i+1, j+1} \\
  f_{i-1, j} &f_{i, j} &f_{i+1, j} \\
  f_{i-1, j-1} &f_{i, j-1} &f_{i+1, j-1}
 \end{pmatrix*}.\]
 Let $\lambda_1=\frac{\Delta t}{\Delta x}$ and  $\lambda_2=\frac{\Delta t}{\Delta y}$, then the scheme \eqref{2dscheme2} can be written as
\begin{eqnarray*}
 \bar{u}^{n+1}_{ij}& = &W_{1y}W_{1x} u^n_{ij}-\lambda_1W_{1y}D_xf^n_{ij}-\lambda_2W_{1x}D_yg^n_{ij},\\
&=&\frac{1}{36}\begin{pmatrix}
                       1 & 4 & 1\\
                       4 & 16 & 4\\
                       1 & 4 & 1 \end{pmatrix}
                       : U 
 - \frac{\lambda_1}{12}\begin{pmatrix}
                       -1 & 0 & 1\\
                       -4 & 0 & 4\\
                       -1 & 0 & 1 \end{pmatrix}
                       : F  - \frac{\lambda_2}{12}\begin{pmatrix}
                       1 & 4 & 1\\
                       0 & 0 & 0\\
                       -1 & -4 & -1 \end{pmatrix}
                       : G,                      
\end{eqnarray*}
where $:$ denotes the sum of all entrywise products in two matrices of the same size.
Obviously the right hand side above is a monotonically increasing function with respect to $u_{lm}$ for $i-1\leq l \leq i+1$, $j-1\leq m \leq j+1$
under the CFL constraint \eqref{2dconveccfl}. The monotonicity implies the bound-preserving result of $\bar{u}^{n+1}_{ij}$. 
\end{proof}

Given $\bar u_{ij}$, we can recover point values $u_{ij}$ by obtaining first $v_{ij}=W_{1x}^{-1}\bar u_{ij}$
then $u_{ij}=W_{1y}^{-1} v_{ij}$.
Thus similar to the discussions in Section \ref{sec-1dconffusion}, given point values $u_{ij}$ satisfying $\bar u_{ij}\in[m,M]$ for any $i$ and $j$, 
we can use the limiter in Algorithm \ref{alg_limiter2} in a dimension by dimension fashion to enforce $u_{ij}\in[m, M]$:
\begin{enumerate}
 \item Given  $\bar{u}_{ij}\in [m, M]$, compute $v_{ij}=W_{1x}^{-1}\bar{u}_{ij}$ which are not necessarily in the range $[m, M]$. 
 Then apply the limiter in Algorithm \ref{alg_limiter2} to $v_{ij}$ ($i=1,\cdots, N_x$) for each fixed $j$.
 Since we have 
 \[\bar{u}_{ij}=\frac{1}{c+2}(v_{i-1, j}+c v_{i,j}+v_{i+1,j}), \quad c=4,\]
 all discussions in Section \ref{sec-limiter} are still valid.  Let $\bar v_{ij}$ denote the output of the limiter,  thus we have $\bar v_{ij}\in [m, M]$.
\item Compute $u_{ij}=W_{1y}^{-1}\bar{v}_{ij}$. Then we have 
 \[\bar{v}_{ij}=\frac{1}{c+2}(u_{i, j-1}+c u_{i,j}+u_{i,j+1}), \quad c=4.\]
Apply the limiter in Algorithm \ref{alg_limiter2} to $u_{ij}$ ($j=1,\cdots, N_y$) for each fixed $i$.
Then the output values are in the range $[m, M]$.
\end{enumerate}

\subsection{Two-dimensional convection diffusion equations}
\label{sec-2dconfussion}
Consider the two-dimensional convection diffusion problem:
{\setlength\arraycolsep{2pt}
\[
u_t + f(u)_x + g(u)_y =a (u)_{xx}+b (u)_{xx},\quad
u(x,y,0)=u_0(x,y),
\]
where $a'(u)\geq 0$ and $b'(u)\geq 0$.
A fourth-order accurate compact finite difference scheme can be written as
\[ \frac{d\mathbf{u}}{dt} = -\frac{1}{\Delta x}W_{1x}^{-1}D_x \mathbf{f}-\frac{1}{\Delta y}W_{1y}^{-1}D_y \mathbf{g} + 
\frac{1}{\Delta x^2}W_{2x}^{-1}D_{xx}\mathbf{a}+\frac{1}{\Delta y^2}W_{2y}^{-1}D_{yy}\mathbf{b}.
\]
 Let $\lambda_1=\frac{\Delta t}{\Delta x}$,  $\lambda_2=\frac{\Delta t}{\Delta y}$,
 $\mu_1=\frac{\Delta t}{\Delta x^2}$ and  $\mu_2=\frac{\Delta t}{\Delta y^2}$. 
 With the forward Euler time discretization, the scheme becomes
\begin{equation}\label{2dcondiffscheme}
u^{n+1}_{ij}=u^n_{ij} -\lambda_1 W_{1x}^{-1}D_xf^n_{ij}-\lambda_2 W_{1y}^{-1}D_yg^n_{ij}
+\mu_1 W_{2x}^{-1}D_{xx}a^n_{ij}+ \mu_2 W_{2y}^{-1}D_{yy}b^n_{ij}.
\end{equation}

We first define
$\bar{\mathbf{u}}=W_{1x}W_{1y}{\mathbf u}$ and $\tilde{\mathbf{u}}=W_{2x}W_{2y}{\mathbf u}$, where $W_1=W_{1x}W_{1y}$ and $W_2=W_{2x}W_{2y}$. 
Due to the fact $W_1W_2=W_2W_1$, we have
\begin{equation*}
 \tilde{\bar{\mathbf{u}}}=W_{2x}W_{2y}(W_{1x}W_{1y}{\mathbf u})=W_{1x}W_{1y}(W_{2x}W_{2y}{\mathbf u})=\bar{\tilde{\mathbf{u}}}.
\end{equation*}
The scheme \eqref{2dcondiffscheme} is equivalent to the following form:
\begin{eqnarray*}\label{2dcondiffschemeinm}
\tilde{\bar{u}}^{n+1}_{ij}&=\tilde{\bar{u}}^n_{ij} -\lambda_1 W_{1y}W_{2x}W_{2y}D_xf^n_{ij}-\lambda_2 W_{1x}W_{2x}W_{2y}D_yg^n_{ij}\\
&\quad +\mu_1 W_{1x}W_{1y}W_{2y}D_{xx}a^n_{ij}+ \mu_2 W_{1x}W_{1y}W_{2x}D_{yy}b^n_{ij}.
\end{eqnarray*}

\begin{thm}
 Under the CFL constraint 
 \begin{equation}\label{2dconvecdiffcfl}
  \frac{\Delta t}{\Delta x}\max_u|f'(u)|+\frac{\Delta t}{\Delta y}\max_u|g'(u)|\leq \frac16,
 \frac{\Delta t}{\Delta x^2}\max_u a'(u)+\frac{\Delta t}{\Delta y^2}\max_u b'(u)\leq \frac{5}{24},
 \end{equation}
 if ${u}^n_{ij}\in [m, M]$, then the scheme \eqref{2dcondiffscheme} satisfies $\tilde{\bar{u}}^{n+1}_{ij}\in[m, M]$. 
\end{thm}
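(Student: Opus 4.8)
The plan is to combine the one-dimensional convection--diffusion argument behind Theorem~\ref{thm-weakmono-convecdiff} with the two-dimensional convection estimate of Theorem~\ref{thm-2dconvec}. Write $\lambda_1=\frac{\Delta t}{\Delta x}$, $\lambda_2=\frac{\Delta t}{\Delta y}$, $\mu_1=\frac{\Delta t}{\Delta x^2}$, $\mu_2=\frac{\Delta t}{\Delta y^2}$, and split \eqref{2dcondiffscheme} as a convex combination $\mathbf u^{n+1}=\frac12 A+\frac12 B$ of a pure convection half $A=\mathbf u^n-2\lambda_1 W_{1x}^{-1}D_x\mathbf f^n-2\lambda_2 W_{1y}^{-1}D_y\mathbf g^n$ and a pure diffusion half $B=\mathbf u^n+2\mu_1 W_{2x}^{-1}D_{xx}\mathbf a^n+2\mu_2 W_{2y}^{-1}D_{yy}\mathbf b^n$, each advanced with twice the time step. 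Applying $W_1W_2=W_{1x}W_{1y}W_{2x}W_{2y}$ to both sides and using that all of $W_{1x},W_{1y},W_{2x},W_{2y},D_x,D_y,D_{xx},D_{yy}$ are circulant and hence pairwise commute (as already noted in the excerpt) gives
\[
\tilde{\bar u}^{n+1}_{ij}=\tfrac12\bigl(W_{2x}W_{2y}(W_1A)\bigr)_{ij}+\tfrac12\bigl(W_{1x}W_{1y}(W_2B)\bigr)_{ij},
\]
where $W_1A=W_{1x}W_{1y}\mathbf u^n-2\lambda_1 W_{1y}D_x\mathbf f^n-2\lambda_2 W_{1x}D_y\mathbf g^n$ and $W_2B=W_{2x}W_{2y}\mathbf u^n+2\mu_1 W_{2y}D_{xx}\mathbf a^n+2\mu_2 W_{2x}D_{yy}\mathbf b^n$. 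Since $W_{2x}W_{2y}$ and $W_{1x}W_{1y}$ are products of weighting matrices (nonnegative entries, unit row sums), they map $[m,M]$ into $[m,M]$; hence it suffices to prove $W_1A\in[m,M]$ and $W_2B\in[m,M]$ entrywise, after which $\tilde{\bar u}^{n+1}_{ij}\in[m,M]$ follows by convexity. The first of these is immediate: $W_1A$ coincides with the right-hand side of the in-means convection scheme \eqref{2dscheme2} run with time step $2\Delta t$, so Theorem~\ref{thm-2dconvec} yields $W_1A\in[m,M]$ precisely under $\lambda_1\max_u|f'(u)|+\lambda_2\max_u|g'(u)|\le\frac16$, the first half of \eqref{2dconvecdiffcfl}.

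The heart of the argument is $W_2B\in[m,M]$. I would expand $W_2B$ as a single nine-point stencil about $(i,j)$ and regroup it node by node as a sum of nine expressions of the form $c_u^{lm}\,u_{lm}+c_a^{lm}\,a(u_{lm})+c_b^{lm}\,b(u_{lm})$ for $i-1\le l\le i+1$, $j-1\le m\le j+1$. The $u$-weights contributed by $W_{2x}W_{2y}$ are $\frac{100}{144},\frac{10}{144},\frac{1}{144}$ at the center, edge and corner nodes and sum to $1$, while $\sum_{l,m}c_a^{lm}=\sum_{l,m}c_b^{lm}=0$ since $D_{xx}$ and $D_{yy}$ annihilate constants; consequently the regrouped expression returns $m$ at $u\equiv m$ and $M$ at $u\equiv M$, and it only remains to verify that each of the nine expressions is nondecreasing in its argument. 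The binding node is the center, where the expression is $\frac{25}{36}u_{ij}-\frac{10}{3}\mu_1 a(u_{ij})-\frac{10}{3}\mu_2 b(u_{ij})$, nondecreasing exactly when $\mu_1\max_u a'(u)+\mu_2\max_u b'(u)\le\frac{5}{24}$, i.e.\ the second half of \eqref{2dconvecdiffcfl}. At the edge nodes the mixed stencils $W_{2y}D_{xx}$ and $W_{2x}D_{yy}$ generate negative $a$- or $b$-coefficients (for instance $-\frac{\mu_1}{3}$ at $(i,j\pm1)$), but these are dominated by the positive $u$-weight $\frac{5}{72}$ there under the same constraint (in fact a weaker one), using $a'\ge0$ and $b'\ge0$; the corner expressions are trivially nondecreasing. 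This gives $W_2B\in[m,M]$ and finishes the proof.

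I expect the only real obstacle to be the bookkeeping of the nine-point coefficients of $W_2B$. In particular one has to see that it is the \emph{center} node that forces the two diffusion CFL numbers $\mu_1\max_u a'$ and $\mu_2\max_u b'$ to be \emph{added}, which is what produces the sharp constant $\frac{5}{24}$; any decomposition treating the $x$- and $y$-diffusion separately --- such as writing $W_2B=\frac12 W_{2y}\!\left(W_{2x}\mathbf u^n+4\mu_1 D_{xx}\mathbf a^n\right)+\frac12 W_{2x}\!\left(W_{2y}\mathbf u^n+4\mu_2 D_{yy}\mathbf b^n\right)$ and invoking the one-dimensional Theorem~\ref{thm-weakmono-convecdiff} on each piece --- would only deliver a strictly more restrictive condition.
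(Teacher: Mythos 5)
Your proof is correct and follows essentially the same route as the paper: the convex splitting of \eqref{2dcondiffscheme} into a convection half and a diffusion half with doubled time steps, the reduction of the convection half to Theorem~\ref{thm-2dconvec}, and the nine-point stencil expansion of $W_2B=\tilde w_{ij}$ with a node-by-node monotonicity check whose binding case is the center node, yielding $\mu_1\max_u a'+\mu_2\max_u b'\le\frac{5}{24}$. You in fact spell out the edge- and corner-node verification and the unit-row-sum/zero-sum bookkeeping more explicitly than the paper does, which simply asserts that $\tilde w_{ij}$ is monotonically increasing in the $u^n$ values under the stated CFL.
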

\begin{proof}
By using $\tilde{\bar{u}}^{n}_{ij}=\frac12 \tilde{\bar{u}}^{n}_{ij}+ \frac12 \bar{ \tilde u}^{n}_{ij}$, we obtain
\begin{eqnarray*}
\tilde{\bar{u}}^{n+1}_{ij}&=&\frac12W_{2x}W_{2y}[\bar{u}^n_{ij} -2 \lambda_1W_{1y}D_xf^n_{ij}-2\lambda_2 W_{1x}D_yg^n_{ij}]\\
&& +\frac12 W_{1x}W_{1y}[\tilde{u}^n_{ij}+2\mu_1 W_{2y}D_{xx}a^n_{ij}+ 2 \mu_2 W_{2x}D_{yy}b^n_{ij}].
\end{eqnarray*}
Let 
$\bar{v}_{ij}=\bar{u}^n_{ij} -2 \lambda_1W_{1y}D_xf^n_{ij}-2\lambda_2 W_{1x}D_yg^n_{ij},
 \tilde{w}_{ij}=\tilde{u}^n_{ij}+2\mu_1 W_{2y}D_{xx}a^n_{ij}+ 2 \mu_2 W_{2x}D_{yy}b^n_{ij}.
$
Then by the same discussion as in the proof of Theorem \ref{thm-2dconvec}, we can show $\bar{v}_{ij}\in [m, M]$.
For  $\tilde{w}_{ij}$, it can be written as 
\[\tilde{w}_{ij}= \frac{1}{144}\begin{pmatrix}
                       1 & 10 & 1\\
                       10 & 100 & 10\\
                       1 & 10 & 1 \end{pmatrix}: U+\frac{\mu_1}{6}\begin{pmatrix}
                       1 & -2 & 1\\
                       10 & -20 & 10\\
                       1 & -2 & 1 \end{pmatrix}: A+ \frac{\mu_2}{6} \begin{pmatrix}
                       1 & 10 & 1\\
                       -2 & -20 & -2\\
                       1 & 10 & 1 \end{pmatrix}: B , \]
\[ A=\begin{pmatrix*}[l]
  a_{i-1, j+1} &a_{i, j+1} &a_{i+1, j+1} \\
  a_{i-1, j} &a_{i, j} &a_{i+1, j} \\
  a_{i-1, j-1} &a_{i, j-1} &a_{i+1, j-1}
 \end{pmatrix*}, \quad  B=\begin{pmatrix*}[l]
  b_{i-1, j+1} &b_{i, j+1} &b_{i+1, j+1} \\
  b_{i-1, j} &b_{i, j} &b_{i+1, j} \\
  b_{i-1, j-1} &b_{i, j-1} &b_{i+1, j-1}
 \end{pmatrix*}.\]
 
Under the CFL constraint \eqref{2dconvecdiffcfl}, $\tilde{w}_{ij}$ is a monotonically increasing function of $u^n_{ij}$ involved thus $\tilde{w}_{ij}\in[m, M]$.
Therefore, $\tilde{\bar{u}}^{n+1}_{ij}\in[m, M]$.
\end{proof}

Given $\tilde{\bar u}_{ij}$, we can recover point values $u_{ij}$ by obtaining first $\tilde u_{ij}=W_{1x}^{-1}W_{1y}^{-1}\tilde{\bar u}_{ij}$
then $u_{ij}=W_{2x}^{-1}W_{2y}^{-1} \tilde u_{ij}$.
Thus similar to the discussions in the previous subsection, given point values $u_{ij}$ satisfying $\tilde{\bar u}_{ij} \in[m,M]$ for any $i$ and $j$, 
we can use the limiter in Algorithm \ref{alg_limiter2} dimension by dimension  several times to enforce $u_{ij}\in[m, M]$:
\begin{enumerate}
 \item Given  $\tilde{\bar u}_{ij}\in [m, M]$, compute $\tilde u_{ij}=W_{1x}^{-1}W_{1y}^{-1}\tilde{\bar u}_{ij}$ and
 apply the limiting algorithm in the previous subsection to ensure  $\tilde u_{ij}\in [m, M]$.
 \item  Compute $v_{ij}=W_{2x}^{-1}\tilde{u}_{ij}$ which are not necessarily in the range $[m, M]$. 
 Then apply the limiter in Algorithm \ref{alg_limiter2} to $v_{ij}$ for each fixed $j$.
 Since we have 
 \[\tilde{u}_{ij}=\frac{1}{c+2}(v_{i-1, j}+c v_{i,j}+v_{i+1,j}),  c=10,\]
 all discussions in Section \ref{sec-limiter} are still valid.  Let $\tilde v_{ij}$ denote the output of the limiter,  thus we have $\tilde v_{ij}\in [m, M]$.
\item Compute $u_{ij}=W_{2y}^{-1}\tilde {v}_{ij}$. Then we have 
 $\tilde {v}_{ij}=\frac{1}{c+2}(u_{i, j-1}+c u_{i,j}+u_{i,j+1}), \quad c=10.$
Apply the limiter in Algorithm \ref{alg_limiter2} to $u_{ij}$ for each fixed $i$.
Then the output values are in the range $[m, M]$.
\end{enumerate}

\section{Higher order extensions}
\label{sec-highorder}
The weak monotonicity  may not hold for 
a generic compact finite difference operator. See \cite{lele1992compact} for a general discussion of compact finite difference schemes.
In this section we demonstrate how to construct a higher order accurate compact finite difference scheme satisfying the weak monotonicity.
 Following Section \ref{sec-1d} and Section \ref{sec-2d},
we can use these compact finite difference operators to construct higher order accurate bound-preserving schemes. 
\subsection{Higher order compact finite difference operators}
Consider a compact finite difference approximation to the first order derivative in the following form:
\begin{equation} \label{highorder1}
\beta_1 f'_{i-2}+\alpha_1 f'_{i-1}+f_i'+ \alpha_1 f'_{i+1}+ \beta_1 f'_{i+2}=b_1\frac{f_{i+2}-f_{i-2}}{4\Delta x}+a_1\frac{f_{i+1}-f_{i-1}}{2\Delta x},
\end{equation}
where $\alpha_1, \beta_1, a_1, b_1$ are constants to be determined.
To obtain a sixth order accurate approximation, there are many choices for $\alpha_1, \beta_1, a_1, b_1$. 
To ensure the approximation in \eqref{highorder1} satisfies the weak monotonicity for solving scalar conservation laws
under some CFL condition, we need $\alpha_1>0, \beta_1>0$.
By requirements above, we obtain 
\begin{equation}\label{highorder2}
 \beta_1=\frac{1}{12}(-1+3\alpha_1),\quad a_1=\frac{2}{9}(8-3\alpha_1),\quad b_1=\frac{1}{18}(-17+57\alpha_1),\quad \alpha_1>\frac13.
\end{equation}
With \eqref{highorder2}, the approximation \eqref{highorder1} is sixth order accurate and satisfies the weak monotonicity as discussed in Section \ref{sec-1dconvection}.
The truncation error of the approximation \eqref{highorder1} and \eqref{highorder2} is $\frac{4}{7!}(9\alpha_1-4)\Delta x^6f^{(7)}+\mathcal O(\Delta x^8)$, so if setting
\begin{equation}\label{highorder3}
 \alpha_1=\frac{4}{9},\quad \beta_1=\frac{1}{36},\quad a_1=\frac{40}{27},\quad b_1=\frac{25}{54},
\end{equation}
 we have an eighth order accurate approximation satisfying the weak monotonicity.

Now consider the fourth order compact finite difference approximations to the second derivative in the following form:
\[ \beta_2 f''_{i-2}+\alpha_2 f''_{i-1}+f_i'+ \alpha_2 f''_{i+1}+ \beta_2 f''_{i+2}=b_2\frac{f_{i+2}-2f_i+f_{i-2}}{4\Delta x^2}+a_2\frac{f_{i+1}-2f_i+f_{i-1}}{\Delta x^2},\]
\[a_2=\frac{1}{3}(4-4\alpha_2-40\beta_2),\quad b_2=\frac{1}{3}(-1+10\alpha_2+46\beta_2).\]
with the truncation error $\frac{-4}{6!}(-2+11\alpha_2-124\beta_2)\Delta x^4f^{(6)}$. 
The fourth order scheme discussed in Section \ref{sec-1d} is the special case with
$\alpha_2=\frac{1}{10},\quad \beta_2=0,\quad a_2=\frac{6}{5},\quad b_2=0.$
If $\beta_2=\frac{11\alpha_2-2}{124}$, we get a family of sixth-order schemes satisfying the weak monotonicity:
\begin{equation}
 a_2=\frac{-78\alpha_2+48}{31},\quad b_2=\frac{291\alpha_2-36}{62},\quad \alpha_2 >0.
\end{equation}
The truncation error of the sixth order approximation is $\frac{4}{31\cdot 8!}(1179\alpha_2-344)\Delta x^6f^{(8)}$.
Thus we obtain an eighth order approximation satisfying the weak monotonicity if
\begin{equation}\label{8thorderdiff}
\alpha_2=\frac{344}{1179}, \beta_2=\frac{23}{2358}, a_2=\frac{320}{393}, b_2=\frac{310}{393}, 
\end{equation}
with truncation error $\frac{-172}{5676885}\Delta x^8f^{(10)}$.

\subsection{Convection problems}
\label{sec-highorderconvec}
For the rest of this section, we will mostly focus on the family of sixth order schemes since the eighth order accurate scheme is a special case of this family. 
For  $u_t+f(u)_x=0$ with periodic boundary conditions on the interval $[0,1]$,
 we get the following semi-discrete scheme:
\[
\frac{d}{dt}\mathbf{u}=-\frac{1}{\Delta x}\widetilde{W}_1^{-1}\widetilde{D}_x\mathbf{f},
\]
    {\setlength\arraycolsep{2pt}{\[\widetilde{W}_1\mathbf{u}=\frac{\beta_1}{1+2\alpha_1+2\beta_1}\begin{pmatrix}
              \frac{1}{\beta_1} & \frac{\alpha_1}{\beta_1} & 1 & & & 1& \frac{\alpha_1}{\beta_1}\\
              \frac{\alpha_1}{\beta_1} & \frac{1}{\beta_1} & \frac{\alpha_1}{\beta_1} & 1 & & & 1 \\              
              1 & \frac{\alpha_1}{\beta_1} & \frac{1}{\beta_1} & \frac{\alpha_1}{\beta_1} & 1 & & \\
              &  \ddots & \ddots & \ddots & \ddots & \ddots &\\
              & & 1 & \frac{\alpha_1}{\beta_1} & \frac{1}{\beta_1} & \frac{\alpha_1}{\beta_1} & 1 \\
              1 & & & 1 & \frac{\alpha_1}{\beta_1} & \frac{1}{\beta_1} & \frac{\alpha_1}{\beta_1}\\
              \frac{\alpha_1}{\beta_1} & 1 & & & 1 & \frac{\alpha_1}{\beta_1} & \frac{1}{\beta_1}\\
             \end{pmatrix}
             \begin{pmatrix}
 u_1 \\
 u_2\\
 u_3\\
 \vdots\\
 u_{N-2}\\
 u_{N-1}\\
 u_N
 \end{pmatrix},\]}
   {\setlength\arraycolsep{2pt}{\[\widetilde{D}_x\mathbf{f} =
 \frac{1}{4(1+2\alpha_1+2\beta_1)}\begin{pmatrix}
              0 & 2a_1 & b_1 & & & -b_1 & -2a_1\\
              -2a_1 & 0 & 2a_1 & b_1 & & & -b_1 \\              
              -b_1 & -2a_1 & 0 & 2a_1 & b_1 & \\
              & \ddots & \ddots & \ddots & \ddots & \ddots & \\
              & & -b_1 & -2a_1 & 0 & 2a_1 & b_1 \\
              b_1 &  & & -b_1 & -2a_1 & 0 & 2a_1 \\
              2a_1 & b_1 & & & -b_1 & -2a_1 & 0\\
             \end{pmatrix} 
  \begin{pmatrix}
 f_1 \\
 f_2\\
 f_3\\
 \vdots\\
 f_{N-2}\\
 f_{N-1}\\
 f_N
 \end{pmatrix},\]}
where $f_i$ and $u_i$ are point values of functions $f(u(x))$ and $u(x)$ at uniform grid points $x_i$ $(i=1,\cdots,N)$ respectively.
We have a family of sixth-order compact schemes with forward Euler time discretization:
\begin{equation}\label{6thordercon}
  \mathbf{u}^{n+1}=\mathbf{u}^n-\frac{\Delta t}{\Delta x}\widetilde{W}_1^{-1}\widetilde{D}_x\mathbf{f}.
\end{equation}
Define $\bar{\mathbf u}=\widetilde{W}_1 \mathbf u$ and $\lambda=\frac{\Delta t}{\Delta x}$,
then scheme \eqref{6thordercon} can be written as 
$$\bar{u}^{n+1}_i=\bar{u}^n_i-\frac{\lambda}{4(1+2\alpha_1+2\beta_1)}(b_1f^n_{i+2}+2a_1f^n_{i+1}-2a_1f^n_{i-1}-b_1f^n_{i-2}).$$
Following the lines in Section \ref{sec-1dconvection}, we can easily conclude that the scheme \eqref{6thordercon} satisfies
$\bar{u}^{n+1}_i\in[m,M]$ if $u_i^n\in[m,M]$,
under the CFL constraint \[\frac{\Delta t}{\Delta x}|f'(u)|\leq \min\{\frac{9}{8-3\alpha_1},\frac{6(3\alpha_1-1)}{57\alpha_1-17}\}.\]

Given $\bar u_i \in[m, M]$, we also need a limiter to enforce $u_i\in[m, M]$. 
Notice that $\bar{u}_i$ has a five-point stencil instead of a three-point stencil in Section \ref{sec-limiter}. Thus in general the extensions
of Section \ref{sec-limiter} for sixth order schemes are more complicated. However, we can still use the same limiter as in 
Section \ref{sec-limiter} because the five-diagonal matrix $\widetilde{W}_1$ can be represented as  a product of two tridiagonal matrices. 

Plugging in $\beta_1=\frac{1}{12}(-1+3\alpha_1)$, we have
$
 \widetilde{W}_1=\widetilde{W}^{(1)}_1\widetilde{W}^{(2)}_1,
 $
where
 \[\widetilde{W}^{(1)}_1=\frac{1}{c^{(1)}_1+2}\begin{pmatrix}
              c^{(1)}_1 & 1 & & &  1\\
              1 & c^{(1)}_1 & 1 &  & \\              
               & \ddots & \ddots & \ddots & \\
               & & 1 & c^{(1)}_1 &1 \\
              1  & & & 1 & c^{(1)}_1 \\
             \end{pmatrix},
c^{(1)}_1=\frac{6\alpha_1}{3\alpha_1-1}-\frac{\sqrt{2}\sqrt{7-24\alpha_1+27\alpha_1^2}}{\sqrt{1-6\alpha_1+9\alpha_1^2}},
             \]
 
 \[
 \widetilde{W}^{(2)}_1=\frac{1}{c^{(2)}_1+2}\begin{pmatrix}
              c^{(2)}_1 & 1 & & &  1\\
              1 & c^{(2)}_1 & 1 &  & \\              
               & \ddots & \ddots & \ddots & \\
               & & 1 & c^{(2)}_1 &1 \\
              1  & & & 1 & c^{(2)}_1 \\
             \end{pmatrix},
c^{(2)}_1=\frac{6\alpha_1}{3\alpha_1-1}+\frac{\sqrt{2}\sqrt{7-24\alpha_1+27\alpha_1^2}}{\sqrt{1-6\alpha_1+9\alpha_1^2}}.
\]

In other words, $\bar{\mathbf{u}}= \widetilde W_1\mathbf{u}= \widetilde W_1^{(1)}  \widetilde W_1^{(2)}\mathbf{u}$.
Thus following the limiting procedure in Section \ref{sec-1dconffusion},
we can still use the same limiter in Section \ref{sec-limiter} twice to enforce the bounds of point values if $c^{(1)}_1,c^{(2)}_1\geq 2$, 
which implies $\frac13<\alpha_1\leq \frac{5}{9}$. 
In this case we have $\min\{\frac{9}{8-3\alpha_1},\frac{6(3\alpha_1-1)}{57\alpha_1-17}\}=\frac{6(3\alpha_1-1)}{57\alpha_1-17}$, thus the CFL for the weak monotonicity
becomes
$\lambda|f'(u)|\leq \frac{6(3\alpha_1-1)}{57\alpha_1-17}$.
We summarize the results in the following theorem.
\begin{thm}
Consider a family of sixth order accurate schemes \eqref{6thordercon} with 
  $$\beta_1=\frac{1}{12}(-1+3\alpha_1),\quad a_1=\frac{2}{9}(8-3\alpha_1),\quad b_1=\frac{1}{18}(-17+57\alpha_1),\quad \frac13<\alpha_1\leq \frac{5}{9},$$
 which includes the eighth order scheme \eqref{highorder3} as a special case. If $u^n_i\in [m, M]$ for all $i$, under the CFL constraint
 $\frac{\Delta t}{\Delta x}\max_u|f'(u)|\leq \frac{6(3\alpha_1-1)}{57\alpha_1-17},$
 we have $\bar u^{n+1}_i\in[m, M]$.
\end{thm}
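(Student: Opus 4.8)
The plan is to reproduce, on the five-point weighting stencil, the argument used for the fourth order scheme in Section~\ref{sec-1dconvection} (Theorem~\ref{thm-weakmono-convec}): express $\bar u^{n+1}_i$ as a single function $H(u^n_{i-2},\dots,u^n_{i+2})$ that is nondecreasing in each of its five arguments under the stated CFL, and then conclude by consistency. Concretely, I would substitute the explicit weighting operator $\bar u_i=\frac{1}{1+2\alpha_1+2\beta_1}\big(\beta_1 u_{i-2}+\alpha_1 u_{i-1}+u_i+\alpha_1 u_{i+1}+\beta_1 u_{i+2}\big)$ into \eqref{6thordercon}, set $\lambda=\Delta t/\Delta x$, and regroup the right-hand side by grid index:
\begin{align*}
(1+2\alpha_1+2\beta_1)\,H
&=\Big[\beta_1 u^n_{i-2}+\tfrac{\lambda b_1}{4}f(u^n_{i-2})\Big]
+\Big[\alpha_1 u^n_{i-1}+\tfrac{\lambda a_1}{2}f(u^n_{i-1})\Big]+u^n_i\\
&\quad+\Big[\alpha_1 u^n_{i+1}-\tfrac{\lambda a_1}{2}f(u^n_{i+1})\Big]
+\Big[\beta_1 u^n_{i+2}-\tfrac{\lambda b_1}{4}f(u^n_{i+2})\Big].
\end{align*}

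For $\frac13<\alpha_1\le\frac59$ one checks that $\alpha_1>0$, $\beta_1=\frac1{12}(3\alpha_1-1)>0$, $a_1=\frac29(8-3\alpha_1)>0$ and $b_1=\frac1{18}(57\alpha_1-17)>0$ (the last because $\frac13>\frac{17}{57}$), and $1+2\alpha_1+2\beta_1>0$, so each bracket is a genuine univariate function of one nodal value. The two ``inner'' brackets $w\mapsto\alpha_1 w\pm\frac{\lambda a_1}{2}f(w)$ are nondecreasing once $\lambda\max_u|f'(u)|\le\frac{2\alpha_1}{a_1}=\frac{9\alpha_1}{8-3\alpha_1}$; the two ``outer'' brackets $w\mapsto\beta_1 w\pm\frac{\lambda b_1}{4}f(w)$ are nondecreasing once $\lambda\max_u|f'(u)|\le\frac{4\beta_1}{b_1}=\frac{6(3\alpha_1-1)}{57\alpha_1-17}$; and the middle term $u^n_i$ is trivially increasing. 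To see that the hypothesis $\lambda\max_u|f'(u)|\le\frac{6(3\alpha_1-1)}{57\alpha_1-17}$ already forces the inner condition, I would clear the (positive) denominators in $\frac{6(3\alpha_1-1)}{57\alpha_1-17}\le\frac{9\alpha_1}{8-3\alpha_1}$ and reduce it to $567\alpha_1^2-315\alpha_1+48\ge0$, which holds for every real $\alpha_1$ since its discriminant $315^2-4\cdot567\cdot48=-9639$ is negative. Thus under the stated CFL, $H$ is nondecreasing in all five arguments.

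Finally, consistency pins down the value on constant states: the $u$-coefficients sum to $\beta_1+\alpha_1+1+\alpha_1+\beta_1=1+2\alpha_1+2\beta_1$ and the $f$-contributions $\tfrac{\lambda b_1}{4}+\tfrac{\lambda a_1}{2}-\tfrac{\lambda a_1}{2}-\tfrac{\lambda b_1}{4}$ cancel, so $H(c,\dots,c)=c$ for every constant $c$. Hence $u^n_i\in[m,M]$ for all $i$ gives $m=H(m,\dots,m)\le\bar u^{n+1}_i=H(u^n_{i-2},\dots,u^n_{i+2})\le H(M,\dots,M)=M$, which is the claim; the eighth order scheme \eqref{highorder3} is covered since $\frac13<\frac49\le\frac59$. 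The only step that is not a verbatim repeat of Theorem~\ref{thm-weakmono-convec}, and hence the main (and rather mild) obstacle, is confirming that the $b_1$-related bound is the binding CFL on the window $\frac13<\alpha_1\le\frac59$---that is, the no-real-root check for $567\alpha_1^2-315\alpha_1+48$; everything else is the familiar ``forward Euler update is a monotone function of the nodal values'', now on a five-point stencil.
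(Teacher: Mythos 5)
Your proof is correct and follows essentially the same route the paper sketches: split $(1+2\alpha_1+2\beta_1)\bar u^{n+1}_i$ into univariate monotone brackets over the five-point stencil, note $H(c,\dots,c)=c$, and check that the $b_1$-bracket gives the binding CFL on $\frac13<\alpha_1\le\frac59$. Your inner-bracket threshold $\frac{9\alpha_1}{8-3\alpha_1}$ (rather than the paper's stated $\frac{9}{8-3\alpha_1}$, which appears to drop an $\alpha_1$) is the correct one, and your quadratic-discriminant check confirms the stated CFL is the minimum either way, so nothing is missing.
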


Given point values $u_i$ satisfying $\widetilde W_1^{(1)}  \widetilde W_1^{(2)} u_i= \widetilde W_1 u_i=\bar{u}_i\in[m,M]$ for any $i$,
we can apply the limiter in Algorithm \ref{alg_limiter2} twice to enforce $u_i\in[m, M]$:
\begin{enumerate}
 \item Given  $\bar{{u}}_i\in [m, M]$, compute $v_i=[\widetilde W_1^{(1)}]^{-1}\bar{u}_i$ which are not necessarily in the range $[m, M]$. 
 Then apply the limiter in Algorithm \ref{alg_limiter2} to $v_i, i=1,\cdots, N$. Let $\bar v_i$ denote the output of the limiter. 
 Since we have 
 $\bar{u}_i=\frac{1}{c_1^{(1)}+2}(v_{i-1}+c_1^{(1)} v_{i}+v_{i+1}), c_1^{(1)}>2,$
 all discussions in Section \ref{sec-limiter} are still valid, thus we have $\bar v_i\in [m, M]$.
\item Compute $u_i=[\widetilde W_1^{(2)}]^{-1}\bar{v}_i$. Apply the limiter in Algorithm \ref{alg_limiter2} to $u_i, i=1,\cdots, N$.
Since we have 
 $\bar{v}_i=\frac{1}{c_1^{(2)}+2}(u_{i-1}+c_1^{(2)} u_{i}+u_{i+1}), c_1^{(2)}>2,$
 all discussions in Section \ref{sec-limiter} are still valid, thus the output are in  $[m, M]$.
\end{enumerate}

\subsection{Diffusion problems}
For simplicity we only consider the diffusion problems and the extension to convection diffusion problems
can be easily discussed following Section \ref{sec-1dconffusion}.
For the one-dimensional scalar  diffusion equation $u_t=g(u)_{xx}$ with $g'(u)\geq 0$ and  periodic boundary conditions on an interval  $[0,1]$,
we get the  sixth order semi-discrete scheme:
$
 \frac{d}{dt}\mathbf{u}=\frac{1}{\Delta x^2}\widetilde{W}^{-1}_2\widetilde{D}_{xx}\mathbf{g},
$
where
\[
 \widetilde{W}_2\mathbf{u}=\frac{\beta_2}{1+2\alpha_2+2\beta_2}\begin{pmatrix}
              \frac{1}{\beta_2} & \frac{\alpha_2}{\beta_2} & 1 & & & 1& \frac{\alpha_2}{\beta_2}\\
              \frac{\alpha_2}{\beta_2} & \frac{1}{\beta_2} & \frac{\alpha_2}{\beta_2} & 1 & & & 1 \\              
              1 & \frac{\alpha_2}{\beta_2} & \frac{1}{\beta_2} & \frac{\alpha_2}{\beta_2} & 1 & & \\
              &  \ddots & \ddots & \ddots & \ddots & \ddots &\\
              & & 1 & \frac{\alpha_2}{\beta_2} & \frac{1}{\beta_2} & \frac{\alpha_2}{\beta_2} & 1 \\
              1 & & & 1 & \frac{\alpha_2}{\beta_2} & \frac{1}{\beta_2} & \frac{\alpha_2}{\beta_2}\\
              \frac{\alpha_2}{\beta_2} & 1 & & & 1 & \frac{\alpha_2}{\beta_2} & \frac{1}{\beta_2}\\
             \end{pmatrix}
             \begin{pmatrix}
 u_1 \\
 u_2\\
 u_3\\
 \vdots\\
 u_{N-2}\\
 u_{N-1}\\
 u_N
 \end{pmatrix},\]
 
{\setlength\arraycolsep{0pt}{

 \mbox{\footnotesize
$\widetilde{D}_{xx}\mathbf{g} =
 \frac{1}{4(1+2\alpha_2+2\beta_2)}\begin{pmatrix}
              -8a_2-2b_2 & 4a_2 & 2b_2 & & & 2b_2 & 4a_2\\
              4a_2 & -8a_2-2b_2 & 4a_2 & 2b_2 & & & 2b_2 \\              
              2b_2 & 4a_2 & -8a_2-2b_2 & 4a_2 & 2b_2 & \\
              & \ddots & \ddots & \ddots & \ddots & \ddots & \\
              & & 2b_2 & 4a_2 & -8a_2-2b_2 & 4a_2 & 2b_2 \\
              2b_2 &  & & 2b_2 & 4a_2 & -8a_2-2b_2 & 4a_2 \\
              4a_2 & 2b_2 & & & 2b_2 & 4a_2 & -8a_2-2b_2\\
             \end{pmatrix} 
  \begin{pmatrix}
 g_1 \\
 g_2\\
 g_3\\
 \vdots\\
 g_{N-2}\\
 g_{N-1}\\
 g_N
 \end{pmatrix},$}
 }  
 where $g_i$ and $u_i$ are values of functions $g(u(x))$ and $u(x)$ at$x_i$ respectively.
 
As in the previous subsection, we prefer to factor $\widetilde{W}_2$ as a product of two tridiagonal matrices. 
Plugging in $\beta_2=\frac{11\alpha_2-2}{124}$, we have:
$
 \widetilde{W}_2=\widetilde{W}^{(1)}_2\widetilde{W}^{(2)}_2,
 $ where
{\setlength\arraycolsep{2pt}{ 
  \begin{eqnarray*}\widetilde{W}^{(1)}_2=\frac{1}{c^{(1)}_2+2}\begin{pmatrix}
              c^{(1)}_2 & 1 & & &  1\\
              1 & c^{(1)}_2 & 1 & &  \\                    
               & \ddots & \ddots & \ddots & \\
               & & 1 & c^{(1)}_2 &1 \\
              1  & & & 1 & c^{(1)}_2 \\
             \end{pmatrix},
c^{(1)}_2=\frac{62\alpha_2}{11\alpha_2-2}-\frac{\sqrt{2}\sqrt{128-726\alpha_2+2043\alpha_2^2}}{\sqrt{4-44\alpha_2+121\alpha_2^2}},\nonumber\\
 \widetilde{W}^{(2)}_2=\frac{1}{c^{(2)}_2+2}\begin{pmatrix}
              c^{(2)}_2 & 1 & & & 1\\
              1 & c^{(2)}_2 & 1 & &  \\ 
               & \ddots & \ddots & \ddots & \\
               & & 1 & c^{(2)}_2 &1 \\
              1  & & & 1 & c^{(2)}_2 \\
             \end{pmatrix},
c^{(2)}_2=\frac{62\alpha_2}{11\alpha_2-2}+\frac{\sqrt{2}\sqrt{128-726\alpha_2+2043\alpha_2^2}}{\sqrt{4-44\alpha_2+121\alpha_2^2}}.\nonumber
\end{eqnarray*}
}
To have $c^{(1)}_2,c^{(2)}_2 \geq 2$, we need $\frac{2}{11}<\alpha_2 \leq \frac{60}{113}$.
The forward Euler gives
\begin{equation}\label{6thordercondiff}
 \mathbf{u}^{n+1}=\mathbf{u}^n+\frac{\Delta t}{\Delta x^2}\widetilde{W}_2^{-1}\widetilde{D}_{xx}\mathbf{g}.
\end{equation}
Define $\tilde{u}_i=\widetilde{W}_2 u_i$ and $\mu=\frac{\Delta t}{\Delta x^2}$, then the scheme \eqref{6thordercondiff} can be written as 
\[
 \tilde u^{n+1}_i= \tilde u^{n}_i+ \frac{\mu}{4(1+2\alpha_2+2\beta_2)}\left[2b_2g^n_{i-2}+4a_2g^n_{i-1}+(-8a_2-2b_2)g^n_i+4a_2g^n_{i+1}+2b_2g^n_{i+2}\right].
\]
\begin{thm}
Consider a family of sixth order accurate schemes \eqref{6thordercondiff} with 
  $$\beta_2=\frac{11\alpha_2-2}{124},  a_2=\frac{-78\alpha_2+48}{31},\quad b_2=\frac{291\alpha_2-36}{62},\quad  \frac{2}{11}<\alpha_2 \leq \frac{60}{113},$$
 which includes the eighth order scheme \eqref{8thorderdiff} as a special case. If $u^n_i\in [m, M]$ for all $i$, under the CFL 
 $\frac{\Delta t}{\Delta x^2}g'(u)<\frac{124}{3(116-111\alpha_2)},$
 the scheme satisfies $\tilde u^{n+1}\in[m, M]$.
\end{thm}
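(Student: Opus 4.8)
The plan is to follow the same template as the proofs of Theorem~\ref{thm-weakmono-convec} and Theorem~\ref{thm-weakmono-convecdiff}: exhibit $\tilde u^{n+1}_i$ as a convex combination of functions of the five point values $u^n_{i+k}$, $|k|\le 2$, each of which is monotonically increasing in its argument and maps $[m,M]$ into itself. Since the scheme is a local stencil operation, the periodic/circulant structure of $\widetilde W_2$ and $\widetilde D_{xx}$ plays no role; only the five-point mask matters.

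First I would insert the definition $\tilde u^n_i=\frac1D\bigl(\beta_2u^n_{i-2}+\alpha_2u^n_{i-1}+u^n_i+\alpha_2u^n_{i+1}+\beta_2u^n_{i+2}\bigr)$, with $D=1+2\alpha_2+2\beta_2$, into the mean-form update and, for each $k$, pair the coefficient of $u^n_{i+k}$ with the coefficient of $g^n_{i+k}=g(u^n_{i+k})$. This rewrites the update as $\tilde u^{n+1}_i=\sum_{|k|\le2}\frac{w_k}{D}\,\phi_k\!\bigl(u^n_{i+k}\bigr)$ with weights $w_0=1$, $w_{\pm1}=\alpha_2$, $w_{\pm2}=\beta_2$ (so $\sum_k w_k=D$, a genuine convex combination once divided by $D$) and $\phi_k(u)=u+c_k\mu\,g(u)$, where $c_0=-\tfrac{8a_2+2b_2}{4}=-\tfrac{4a_2+b_2}{2}$ is negative while $c_{\pm1}=\tfrac{a_2}{\alpha_2}$ and $c_{\pm2}$ (a positive multiple of $b_2/\beta_2$) are positive. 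Two structural facts must be recorded at this step: $\sum_k w_k c_k=0$, which is exactly the statement that $\widetilde D_{xx}$ annihilates constants (zero row sums); and $a_2>0$, $b_2>0$, $\beta_2>0$ on the parameter range $\tfrac2{11}<\alpha_2\le\tfrac{60}{113}$, which one verifies directly from $a_2=\tfrac{48-78\alpha_2}{31}$, $b_2=\tfrac{291\alpha_2-36}{62}$, $\beta_2=\tfrac{11\alpha_2-2}{124}$.

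Next comes the monotonicity of each $\phi_k$. For $k\ne0$, $c_k>0$ and $g'\ge0$ give $\phi_k'\ge1>0$ with no restriction on $\mu$. For $k=0$, $\phi_0'(u)=1+c_0\mu g'(u)\ge0$ exactly when $|c_0|\mu g'(u)\le1$, i.e. $\mu g'(u)\le \tfrac{2}{4a_2+b_2}$; substituting the formulas for $a_2,b_2$ one gets $4a_2+b_2=\tfrac{3(116-111\alpha_2)}{62}$, so this is precisely the stated CFL $\tfrac{\Delta t}{\Delta x^2}g'(u)<\tfrac{124}{3(116-111\alpha_2)}$. With all $\phi_k$ increasing, the conclusion is immediate: for $u^n_{i+k}\in[m,M]$ we have $\phi_k(m)\le\phi_k(u^n_{i+k})\le\phi_k(M)$, and since $\sum_k\tfrac{w_k}{D}\phi_k(m)=m+\mu g(m)\sum_k\tfrac{w_k}{D}c_k=m$ and likewise $\sum_k\tfrac{w_k}{D}\phi_k(M)=M$, taking the convex combination gives $m\le\tilde u^{n+1}_i\le M$. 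The eighth-order member $\alpha_2=\tfrac{344}{1179}$ lies in $(\tfrac2{11},\tfrac{60}{113}]$, so it is covered.

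The only real work is the algebraic bookkeeping: checking that the coefficient pairing is consistent, that $\sum_k w_k c_k=0$, and that $4a_2+b_2$ simplifies to $\tfrac{3(116-111\alpha_2)}{62}$. These are elementary identities in $\alpha_2$ once the defining relations for $a_2,b_2,\beta_2$ are used, and I expect no conceptual obstacle — the structure is identical to the fourth-order diffusion argument of Theorem~\ref{thm-weakmono-convecdiff}, just with a five-point instead of a three-point mask.
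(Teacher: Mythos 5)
Your proof is correct and is exactly the argument the paper intends (the paper states this theorem without proof, but your convex-combination decomposition with weights $\beta_2,\alpha_2,1,\alpha_2,\beta_2$ and increments $\phi_k(u)=u+c_k\mu g(u)$ mirrors the fourth-order diffusion case in Theorem~\ref{thm-weakmono-convecdiff}); the sign checks on $a_2,b_2,\beta_2$ over $\tfrac{2}{11}<\alpha_2\le\tfrac{60}{113}$ and the key identity $4a_2+b_2=\tfrac{3(116-111\alpha_2)}{62}$, which reproduces the stated CFL, all verify. One caution: the paper's displayed $\widetilde{D}_{xx}$ carries $2b_2$ on the $\pm2$ off-diagonals, which is inconsistent with the defining stencil $b_2\tfrac{f_{i+2}-2f_i+f_{i-2}}{4\Delta x^2}$ and would violate your zero-row-sum step; your argument is for the (correct) operator with $b_2$ there, so the proof stands.
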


As in the previous subsection, given point values $u_i$ satisfying $\widetilde W_2^{(1)}  \widetilde W_2^{(2)} u_i= \widetilde W_2 u_i=\tilde{u}_i\in[m,M]$ for any $i$,
we can apply the limiter in Algorithm \ref{alg_limiter2} twice to enforce $u_i\in[m, M]$.
The matrices $\widetilde{W}_1$  and $\widetilde{W}_2$ commute because they are both circulant matrices thus diagonalizable by the discrete Fourier matrix. 
The discussion for the sixth order scheme solving convection diffusion problems is also straightforward.

\section{Extensions to general boundary conditions}
\label{sec-bc}
Since the compact finite difference operator is implicitly defined thus any extension to 
other type boundary conditions is not straightforward. In order to maintain the weak monotonicity, the boundary conditions must be properly treated. 
In this section we demonstrate  a high order accurate boundary treatment preserving the weak monotonicity for inflow and outflow boundary conditions.
For convection problems, we can easily construct a fourth order accurate boundary scheme. For convection diffusion problems, it is much more complicated
to achieve weak monotonicity near the boundary thus a straightforward discussion gives us a third order accurate boundary scheme.
\subsection{Inflow-outflow boundary conditions for convection problems}
For simplicity, we consider the following initial boundary value problem on the interval $[0,1]$ as an example:
$
 u_t+f(u)_x=0, \quad u(x,0)=u_0(x),\quad u(0,t)=L(t),
$
where we assume $f'(u)>0$ so that the inflow boundary condition at the left cell end is a well-posed boundary condition. 
The boundary condition at $x=1$ is not specified thus understood as an outflow boundary condition. 
We further assume $u_0(x)\in[m, M]$ and $L(t)\in[m, M]$ so that the exact solution is in $[m, M]$.

Consider a uniform grid with $x_i=i\Delta x$ for $i=0, 1, \cdots, N, N+1$ and $\Delta x=\frac{1}{N+1}$. 
Then a fourth order semi-discrete compact finite difference scheme is given by
{\setlength\arraycolsep{2pt}\begin{eqnarray*}
\frac{d}{dt}\frac16\begin{pmatrix}
              1 & 4 & 1 & & \\            
               &  \ddots & \ddots & \ddots\\
                & & 1 & 4 & 1
             \end{pmatrix}
             \begin{pmatrix}
 u_{0} \\
 \vdots\\
 u_{N+1}
 \end{pmatrix}=
 \frac{1}{2\Delta x}\begin{pmatrix}
              -1 &0 & 1 & & & & \\
                & \ddots & \ddots & \ddots  \\
                & & -1 & 0 & 1 \\
             \end{pmatrix}
             \begin{pmatrix}
 f_0 \\
 \vdots\\
 f_{N+1}
 \end{pmatrix}.
\end{eqnarray*}}
With forward Euler time discretization, the scheme is equivalent to 
\begin{equation}\label{inflowoutflow}
 \bar{u}_i^{n+1}=\bar{u}_i^n-\frac{1}{2}\lambda(f_{i+1}^n-f_{i-1}^n),\quad i=1, \cdots, N.
\end{equation}
Here $u_0^n=L(t^n)$ is given as boundary condition for any $n$. Given $u_i^n$ for $i=0,1,\cdots, N+1$,
the scheme \eqref{inflowoutflow} gives $\bar u^{n+1}_i$ for $i=1, \cdots, N$, from which we still need $u_{N+1}^{n+1}$ to recover interior point values $u^{n+1}_i$ for $i=1, \cdots, N$.  

Since the boundary condition at $x_{N+1}=1$ can be implemented as outflow, we can use $\bar u^{n+1}_i$ for $i=1, \cdots, N$ to obtain a reconstructed  $u_{N+1}^{n+1}$. If there is a cubic polynomial $p_i(x)$ so that $u_{i-1}, u_i, u_{i+1}$ are its point values at $x_{i-1}, x_i, x_{i+1}$, then
$\frac{1}{2\Delta x}\int_{x_{i-1}}^{x_{i+1}} p_i(x)\,dx=\frac16 u_{i-1}+\frac46 u_i+\frac16 u_{i+1}=\bar u_i,$
due to the exactness of the Simpson's quadrature rule for cubic polynomials. 
To this end, we can consider a unique cubic polynomial $p(x)$ satisfying four equations:
$\frac{1}{2\Delta x}\int_{x_{j-1}}^{x_{j+1}} p(x)\,dx=\bar u_j^{n+1},\quad j=N-3, N-2, N-1, N.$
If $\bar u_j^{n+1}$ are fourth order accurate approximations to $\frac16 u(x_{j-1}, t^{n+1})+\frac46 u(x_{j}, t^{n+1})+\frac16 u(x_{j+1}, t^{n+1})$,  
then $p(x)$ is a fourth order accurate approximation to $u(x,t^{n+1})$ on the interval $[x_{N-4}, x_{N+1}]$.
So we get a fourth order accurate $u_{N+1}^{n+1}$ by  
\begin{equation}\label{interpolation}
 p(x_{N+1})=-\frac{2}{3}\bar{u}_{N-3}+\frac{17}{6}\bar{u}_{N-2}-\frac{14}{3}\bar{u}_{N-1}+\frac{7}{2}\bar{u}_{N}.
\end{equation}
Since \eqref{interpolation} is not a convex linear combination, $p(x_{N+1})$ may not lie in the bound $[m,M]$. Thus to ensure $u^{n+1}_{N+1}\in[m, M]$ we can define
\begin{equation}\label{filter}
 u^{n+1}_{N+1}:=\max\{\min\{p(x_{N+1}),M\},m\}.
\end{equation}

Obviously Theorem \ref{thm-weakmono-convec} still holds for the scheme \eqref{inflowoutflow}. 
For the forward Euler time discretization, we can implement the bound-preserving scheme as follows:
\begin{enumerate}
 \item Given $u_i^n$ for all $i$, compute $\bar u^{n+1}_i$ for $i=1, \cdots, N$ by   \eqref{inflowoutflow}.
 \item Obtain boundary values $u^{n+1}_0=L(t^{n+1})$ and $u^{n+1}_{N+1}$ by \eqref{interpolation} and \eqref{filter}. 
 \item Given $\bar u^{n+1}_i$ for $i=1, \cdots, N$ and two boundary values  $u^{n+1}_0$ and $u^{n+1}_{N+1}$, 
 recover point values $u^{n+1}_i$ for $i=1, \cdots, N$ by solving the tridiagonal linear system (the superscript $n+1$ is omitted):
 \[ \frac16\begin{pmatrix}
              4 & 1 & & &\\              
              1 & 4 & 1 & &\\
               & \ddots & \ddots & \ddots&\\
              &  & 1 & 4 & 1\\
              & && 1 & 4
             \end{pmatrix}
             \begin{pmatrix}
 u_{1} \\
 u_{2}\\
 \vdots\\
  u_{N-1}\\
  u_{N}
 \end{pmatrix}
 =\begin{pmatrix}
 \bar u_{1}-\frac16 u_0 \\
 \bar u_{2}\\
 \vdots\\
  \bar u_{N-1}\\
  \bar u_{N}-\frac16 u_{N+1}
 \end{pmatrix}.\]
 \item Apply the limiter in Algorithm \ref{alg_limiter2} to the point values  $u^{n+1}_i$ for $i=1, \cdots, N$. 
\end{enumerate}

\subsection{Dirichlet boundary conditions for one-dimensional convection diffusion equations}
\label{sec-Dirichlet}
Consider the initial boundary value problem for a one-dimensional scalar convection diffusion equation on the interval $[0,1]$:
\begin{equation}\label{convectiondiffusionDirichlet}
 u_t+f(u)_x=g(u)_{xx},\quad u(x,t)=u_0(x),\quad u(0,t)=L(t),\quad u(1,t)=R(t),
\end{equation}
where $g'(u)\geq 0$.
We further assume $u_0(x)\in[m, M]$ and $L(t), R(t)\in[m, M]$ so that the exact solution is in $[m, M]$.

We demonstrate how to treat the boundary approximations so that the scheme still satisfies some weak monotonicity such that 
a certain convex combination of point values is in the range $[m, M]$ at the next time step. 
Consider a uniform grid with $x_i=i\Delta x$ for $i=0, 1, \cdots, N, N+1$ where $\Delta x=\frac{1}{N+1}$. 
The fourth order compact finite difference approximations at the interior points can be written as:
\begin{eqnarray*}
W_1
 \begin{pmatrix}
 f_{x,1} \\
 f_{x,2}\\
 \vdots\\
 f_{x,N-1}\\
 f_{x,N}
 \end{pmatrix}  =
\frac{1}{\Delta x}D_x
  \begin{pmatrix}
 f_1 \\
 f_2\\
 \vdots\\
 f_{N-1}\\
 f_N
 \end{pmatrix}
 +
 \begin{pmatrix}
 -\frac{f_{x,0}}{6}-\frac{f_0}{2\Delta x} \\
 0\\
 \vdots\\
 0\\
 -\frac{f_{x,N+1}}{6}+\frac{f_{N+1}}{2\Delta x}
 \end{pmatrix}
 ,\end{eqnarray*}
 {\setlength\arraycolsep{2pt}
 \begin{eqnarray*}
W_1=\frac16\begin{pmatrix}
              4 & 1 & & &  \\
              1 & 4 & 1 & \\   
              & \ddots & \ddots & \ddots & \\
               & & 1 & 4 & 1\\
               & & & 1 & 4 
             \end{pmatrix}, \quad 
             D_x= \frac12 \begin{pmatrix}
              0 & 1 & & &  \\
              -1 & 0 & 1 &   \\ 
               & \ddots & \ddots & \ddots & \\
               & & -1 & 0 & 1 \\
               & & & -1 & 0 \\
             \end{pmatrix},
 \end{eqnarray*}
}
\begin{eqnarray*}
W_2
 \begin{pmatrix}
 g_{xx,1} \\
 g_{xx,2}\\
 \vdots\\
 g_{xx,N-1}\\
 g_{xx,N}
 \end{pmatrix}  =
\frac{1}{\Delta x^2} D_{xx}
  \begin{pmatrix}
 g_1 \\
 g_2\\
 \vdots\\
 g_{N-1}\\
 g_N
 \end{pmatrix}
 +
 \begin{pmatrix}
 -\frac{g_{xx,0}}{12}+\frac{g_0}{\Delta x^2} \\
 0\\
 \vdots\\
 0\\
 -\frac{g_{xx,N+1}}{12}+\frac{g_{N+1}}{\Delta x^2}
 \end{pmatrix}
 ,\end{eqnarray*}
 
{\setlength\arraycolsep{2pt}\begin{eqnarray*}
 W_{2}=\frac{1}{12} \begin{pmatrix}
              10& 1 & & &  \\
              1 & 10 & 1 & &  \\    
               & \ddots & \ddots & \ddots & \\
               & & 1 & 10 &1 \\
               & & & 1 & 10 
             \end{pmatrix},\quad
D_{xx}=\begin{pmatrix}
              -2 & 1 & & & \\
              1 & -2 & 1 & &  \\   
               & \ddots & \ddots & \ddots & \\
               & & 1 & -2 &1 \\
               & & & 1 & -2 
             \end{pmatrix},
\end{eqnarray*}
}
where $f_{x,i}$ and $g_{xx,i}$ denotes the values of $f(u)_x$ and $g(u)_{xx}$ at $x_i$ respectively. Let 
\begin{eqnarray*}
F= \begin{pmatrix}
 -\frac{f_{x,0}}{6}-\frac{f_0}{2\Delta x} \\
 0\\
 \vdots\\
 0\\
 -\frac{f_{x,N+1}}{6}+\frac{f_{N+1}}{2\Delta x}
 \end{pmatrix},\quad
G= \begin{pmatrix}
 -\frac{g_{xx,0}}{12}+\frac{g_0}{\Delta x^2} \\
 0\\
 \vdots\\
 0\\
 -\frac{g_{xx,N+1}}{12}+\frac{g_{N+1}}{\Delta x^2}
 \end{pmatrix}.
\end{eqnarray*}
Define $W:=W_1W_2=W_2W_1$. Here $W_2$ and $W_1$ commute because they have the same eigenvectors, which is due to the fact that $2W_2-W_1$ is the identity matrix.
Let $\mathbf u=\begin{pmatrix}
                u_1 & u_2 & \cdots & u_N
               \end{pmatrix}^T$, 
$\mathbf{f}=\begin{pmatrix}
 f(u_1) &
 f(u_2) &
 \cdots &
 f(u_N)
 \end{pmatrix}^T$ and $\mathbf{g}=\begin{pmatrix}
 g(u_1) &
 g(u_2) &
 \cdots &
 g(u_N)
 \end{pmatrix}^T$.                    
Then a fourth order compact finite difference approximation to \eqref{convectiondiffusionDirichlet} at the interior grid points is
$\frac{d}{dt}\mathbf u+ W_{1}^{-1}(\frac{1}{\Delta x}D_x \mathbf{f}+F)=W_{2}^{-1}(\frac{1}{\Delta x^2}D_{xx} \mathbf{g}+G)$
which is equivalent to 
\begin{eqnarray*}
 \frac{d}{dt}(W\mathbf u)+\frac{1}{\Delta x} W_2D_x\mathbf{f}-\frac{1}{\Delta x^2} W_1D_{xx}\mathbf{g}=-W_2F+W_1G.
\end{eqnarray*}
If $u_i(t)=u(x_i,t)$ where $u(x,t)$ is the exact solution to the problem, then it satisfies
\begin{equation}
u_{t,i}+f_{x,i}=g_{xx, i},
\label{exactgridvalue}
\end{equation}
where $u_{t,i}=\frac{d}{dt} u_{i}(t)$, $f_{x,i}=f(u_i)_{x}$ and $g_{xx,i}=g(u_i)_{xx}$.
If we use \eqref{exactgridvalue} to simplify $-W_2F+W_1G$, then the scheme is still fourth order accurate. 
In other words, setting $-f_{x,i}+g_{xx, i}=u_{t,i}$ does not affect the accuracy. Plugging \eqref{exactgridvalue} in the original $-W_2F+W_1G$, 
we can redefine $-W_2F+W_1G$ as
\begin{eqnarray*}
 -W_2F+W_1G: =\begin{pmatrix}
 -\frac{1}{18}u_{t,0}+\frac{1}{12}f_{x,0}+\frac{5}{12\Delta x}f_0+\frac{2}{3\Delta x^2}g_0 \\
 -\frac{1}{72}u_{t,0}+\frac{1}{24}f_0+\frac{1}{6\Delta x^2}g_0\\
 0\\
 \vdots\\
 0\\
 -\frac{1}{72}u_{t,N+1}-\frac{1}{24}f_{N+1}+\frac{1}{6\Delta x^2}g_{N+1}\\
 -\frac{1}{18}u_{t,N+1}+\frac{1}{12}f_{x,N+1}-\frac{5}{12\Delta x}f_{N+1}+\frac{2}{3\Delta x^2}g_{N+1}
 \end{pmatrix}.
\end{eqnarray*}
So we now consider the following fourth order accurate scheme:
\begin{equation}
 \label{c-dDirichletscheme}
\frac{d}{dt}(W\mathbf u)+\frac{1}{\Delta x}W_2D_x\mathbf f-\frac{1}{\Delta x^2}W_1D_{xx}\mathbf g =\begin{pmatrix}
 -\frac{1}{18}u_{t,0}+\frac{1}{12}f_{x,0}+\frac{5}{12\Delta x}f_0+\frac{2}{3\Delta x^2}g_0 \\
 -\frac{1}{72}u_{t,0}+\frac{1}{24}f_0+\frac{1}{6\Delta x^2}g_0\\
 0\\
 \vdots\\
 0\\
 -\frac{1}{72}u_{t,N+1}-\frac{1}{24}f_{N+1}+\frac{1}{6\Delta x^2}g_{N+1}\\
 -\frac{1}{18}u_{t,N+1}+\frac{1}{12}f_{x,N+1}-\frac{5}{12\Delta x}f_{N+1}+\frac{2}{3\Delta x^2}g_{N+1}
 \end{pmatrix}.
\end{equation}
The first equation in \eqref{c-dDirichletscheme} is 
\begin{eqnarray*}
 \frac{d}{dt}(\frac{4u_0+41u_1+14u_2+u_3}{72})=\frac{1}{24\Delta x}(10f_0+f_1-10f_2-f_3)
 +\frac{1}{6\Delta x^2}(4g_0-7g_1+2g_2+g_3)+\frac{1}{12}f_{x,0}.
\end{eqnarray*}
After multiplying $\frac{72}{60}=\frac{6}{5}$ to both sides, it becomes
\begin{eqnarray}
\label{c-dDirichletschemefirstline}
 \frac{d}{dt}(\frac{4u_0+41u_1+14u_2+u_3}{60})=\frac{1}{20\Delta x}(10f_0+f_1-10f_2-f_3)\nonumber\\
 +\frac{1}{5\Delta x^2}(4g_0-7g_1+2g_2+g_3)+\frac{1}{10}f_{x,0}.
\end{eqnarray}

In order for the scheme \eqref{c-dDirichletschemefirstline} to satisfy a weak monotonicity in the sense that $\frac{4u^{n+1}_0+41u^{n+1}_1+14u^{n+1}_2+u^{n+1}_3}{60}$ 
in \eqref{c-dDirichletschemefirstline} with forward Euler can be written as 
a monotonically increasing function of $u^n_i$ under some CFL constraint, we still need to find an approximation to $f(u)_{x,0}$ using only $u_0, u_1, u_2, u_3$,
with which we have a straightforward third order approximation to $f(u)_{x,0}$:
\begin{equation}\label{bc-third}
f_{x,0}=\frac{1}{\Delta x}(-\frac{11}{6}f_0+3f_1-\frac32f_2+\frac13f_3)+\mathcal O(\Delta x^3).                                                   
\end{equation}
Then \eqref{c-dDirichletschemefirstline} becomes
\begin{eqnarray}
\label{c-dDirichletschemefirstline2}
  \frac{d}{dt}(\frac{4u_0+41u_1+14u_2+u_3}{60})=\frac{1}{60\Delta x}(19f_0+21f_1-39f_2-f_3)
  \nonumber\\
 +\frac{1}{5\Delta x^2}(4g_0-7g_1+2g_2+g_3).
\end{eqnarray}

The second to second last equations of \eqref{c-dDirichletscheme} can be written as
\begin{eqnarray}\label{c-dDirichletscheme_middle}
 \frac{d}{dt}(\frac{u_{i-2}+14u_{i-1}+42u_i+14u_{i+1}+u_{i+2}}{72})=\frac{1}{24\Delta x}(f_{i-2}+10f_{i-1}\\
 -10f_{i+1}-f_{i+2})+\frac{1}{6\Delta x^2}(g_{i-2}+2g_{i-1}-6g_i+2g_{i+1}+g_{i+2}),\quad 2\leq i \leq N-1,\nonumber
\end{eqnarray}
which satisfies a straightforward weak monotonicity under some CFL constraint.

The last equation in \eqref{c-dDirichletscheme} is 
\begin{eqnarray*}
  \frac{d}{dt}(\frac{4u_{N+1}+41 u_{N}+14u_{N-1}+u_{N-2}}{72})=\frac{1}{24\Delta x}(f_{N-2}+10f_{N-1}-f_N\nonumber\\
  -10f_{N+1})+\frac{1}{6\Delta x^2}(g_{N-2}+2g_{N-1}-7g_{N}+4g_{N+1})+\frac{1}{12}f_{x,N+1}.
\end{eqnarray*}
After multiplying $\frac{72}{60}=\frac{6}{5}$ to both sides, it becomes 
\begin{eqnarray*}
  \frac{d}{dt}(\frac{u_{N-2}+14u_{N-1}+41 u_{N}+4u_{N+1}}{60})=\frac{1}{20\Delta x}(f_{N-2}+10f_{N-1}-f_N\nonumber\\
  -10f_{N+1})+\frac{1}{5\Delta x^2}(g_{N-2}+2g_{N-1}-7g_{N}+4g_{N+1})+\frac{1}{10}f_{x,N+1}.
\end{eqnarray*}
Similar to the boundary scheme at $x_0$, 
we should use a third-order approximation:
\begin{equation}
f_{x,N+1}=\frac{1}{\Delta x}(-\frac13f_{N-2}+\frac32f_{N-1}-3f_N+\frac{11}{6}f_{N+1})+\mathcal O(\Delta x^3).                                                   
\end{equation}
Then the boundary scheme at $x_{N+1}$ becomes
{\setlength\arraycolsep{2pt}\begin{eqnarray}
\label{c-dDirichletschemelastline}
   \frac{d}{dt}(\frac{u_{N-2}+14u_{N-1}+41 u_{N}+4u_{N+1}}{60})=\frac{1}{60\Delta x}(f_{N-2}+39f_{N-1}-21f_N\nonumber\\
  -19f_{N+1})+\frac{1}{5\Delta x^2}(g_{N-2}+2g_{N-1}-7g_{N}+4g_{N+1}).\quad
\end{eqnarray}
}
 
 To summarize the full semi-discrete scheme, we can represent the third order scheme
 \eqref{c-dDirichletschemefirstline2}, \eqref{c-dDirichletscheme_middle} and \eqref{c-dDirichletschemelastline},
for the Dirichlet boundary conditions as:
\begin{eqnarray*}
\frac{d}{dt}\widetilde{W}\tilde{\mathbf u}=-\frac{1}{\Delta x} \widetilde{D}_xf(\tilde{\mathbf u})+\frac{1}{\Delta x^2}\widetilde{D}_{xx}g(\tilde{\mathbf u}),
\end{eqnarray*}
where 
\begin{eqnarray*}
\label{1d-w-tilde}
 \widetilde{W}= \frac{1}{72}\begin{pmatrix}
              \frac{24}{5} & \frac{246}{5} & \frac{84}{5} & \frac65 & & & & \\
              1 & 14 & 42 & 14 & 1 & &  \\     
               & \ddots & \ddots & \ddots & \ddots & \ddots\\
               & & 1 & 14 & 42 & 14 & 1 \\
               & & & \frac65 & \frac{84}{5} & \frac{246}{5} & \frac{24}{5} 
             \end{pmatrix}_{N\times (N+2)},
             \tilde{\mathbf u}=\begin{pmatrix}
 u_{0} \\
 u_{1}\\
 \vdots\\
 u_{N}\\
 u_{N+1}
             \end{pmatrix}_{(N+2)\times 1},\end{eqnarray*}
\[   \widetilde{D}_x=\frac{1}{24}\begin{pmatrix}
                    -\frac{38}{5} & -\frac{42}{5} & \frac{78}{5} & \frac25 & & & &\\
                    -1 & -10 &  0 & 10 & 1 & &\\
                     & \ddots & \ddots & \ddots & \ddots & \ddots \\
                     & & -1 & -10 &  0   & 10 &1 \\
                     & & & -\frac25 & -\frac{78}{5} & \frac{42}{5} & \frac{38}{5} 
                   \end{pmatrix}_{N\times (N+2)},
\widetilde{D}_{xx}=\frac{1}{6}
\begin{pmatrix}
 \frac{24}{5} & -\frac{42}{5} & \frac{12}{5} & \frac65 & & & &\\
1 & 2 & -6 & 2 & 1 & &\\
 & \ddots & \ddots & \ddots & \ddots & \ddots \\
 & & 1 & 2 & -6 & 2 & 1\\
 & & & \frac65 & \frac{12}{5} & -\frac{42}{5} & \frac{24}{5}
\end{pmatrix}_{N\times (N+2)}. \nonumber 
\]
Let $\bar{\mathbf u}=\widetilde{W}\tilde{\mathbf u}$, $\lambda=\frac{\Delta t}{\Delta x}$ and $\mu=\frac{\Delta t}{\Delta x^2}$. With forward Euler, it becomes
\begin{eqnarray}
\label{bcscheme}
\bar{u}^{n+1}_i=\bar{u}^{n}_i-\frac12\lambda\widetilde{D}_x\tilde{f}_i+\mu\widetilde{D}_{xx}\tilde{g}_i, \quad i=1,\cdots, N.
\end{eqnarray}
We state the weak monotonicity without proof.
\begin{thm}
Under the CFL constraint 
$\frac{\Delta t}{\Delta x} \max_u|f'(u)|\leq \frac{4}{19},\frac{\Delta t}{\Delta x^2} \max_u g'(u)\leq \frac{695}{1596},$
if $u^n_i\in[m, M]$, then the scheme  \eqref{bcscheme} satisfies $\bar u^{n+1}_i\in [m, M]$. 
\end{thm}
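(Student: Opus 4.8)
The plan is to establish this by the convex-splitting argument already used for Theorem~\ref{thm-weakmono-convecdiff}. It suffices to show that, under the stated CFL constraints, for each $i=1,\dots,N$ the forward-Euler update in \eqref{bcscheme} can be written as $\bar u_i^{n+1}=H_i\big(u_{i-2}^n,\dots,u_{i+2}^n\big)$, where $H_i$ is non-decreasing in each argument (the boundary data $u_0^n=L(t^n)$, $u_{N+1}^n=R(t^n)$ being treated as inputs); because the weighting row in $\widetilde W$ has non-negative entries summing to one while the corresponding rows of $\widetilde D_x$ and $\widetilde D_{xx}$ have zero sum, one then automatically has $H_i(m,\dots,m)=m$ and $H_i(M,\dots,M)=M$, so $u_j^n\in[m,M]$ for all $j$ (with $L(t^n),R(t^n)\in[m,M]$ by the hypothesis on \eqref{convectiondiffusionDirichlet}) forces $m\le\bar u_i^{n+1}\le M$.

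For each row I would split the weighting coefficients on its left-hand side into a ``convective'' part and a ``diffusive'' part, attach the convective flux difference to the former and the diffusive difference to the latter, and require each part to be non-decreasing in every $u_j^n$. The interior rows $2\le i\le N-1$, equation \eqref{c-dDirichletscheme_middle}, are handled exactly as in Section~\ref{sec-1dconffusion}: the convective part is increasing in each $u_j^n$ once $\lambda\max_u|f'(u)|$ is small enough that every scalar map $u\mapsto\theta u\pm c\lambda f(u)$ that appears is increasing, and the diffusive part is increasing because $g'\ge0$ makes the off-diagonal $g$-contributions automatically monotone and the diagonal one $\theta u_i-c\mu g(u_i)$ increasing once $\mu\max_u g'(u)$ is small; the requirements produced here ($\lambda\max_u|f'(u)|\le\tfrac13$, $\mu\max_u g'(u)\le\tfrac7{12}$) are slack relative to the boundary rows.

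The binding constraints come from the boundary row $i=1$ and, symmetrically, $i=N$. Writing \eqref{c-dDirichletschemefirstline2} with forward Euler,
\[
\bar u_1^{n+1}=\tfrac1{60}\big(4u_0^n+41u_1^n+14u_2^n+u_3^n\big)+\tfrac{\lambda}{60}\big(19f_0^n+21f_1^n-39f_2^n-f_3^n\big)+\tfrac{\mu}{5}\big(4g_0^n-7g_1^n+2g_2^n+g_3^n\big),
\]
I would split the weighting $\tfrac1{60}(4,41,14,1)$ as $\tfrac1{60}(a_0,a_1,a_2,a_3)+\tfrac1{60}(b_0,b_1,b_2,b_3)$ with $a_j,b_j\ge0$, assigning $19f_0^n,21f_1^n,-39f_2^n,-f_3^n$ to the $a$-part and the $g$-terms to the $b$-part. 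Monotonicity of the $a$-part in $u_0,u_1,u_2,u_3$ needs $a_0\ge19\lambda\max_u|f'(u)|$, $a_1\ge21\lambda\max_u|f'(u)|$, $a_2\ge39\lambda\max_u|f'(u)|$ and $a_3\ge\lambda\max_u|f'(u)|$; monotonicity of the $b$-part needs only $b_1\ge84\mu\max_u g'(u)$, since the $g$-coefficients $4,2,1$ are positive and $g'\ge0$. With $a_j+b_j$ equal to $4,41,14,1$, feasibility of the first slot requires $\lambda\max_u|f'(u)|\le\tfrac{4}{19}$ (more restrictive than the $\tfrac{14}{39}$ from the third slot and than the interior bounds), and feasibility of the second slot requires $21\lambda\max_u|f'(u)|+84\mu\max_u g'(u)\le41$; imposing the stated pair $\lambda\max_u|f'(u)|\le\tfrac{4}{19}$, $\mu\max_u g'(u)\le\tfrac{695}{1596}$ makes that left side equal $41$ at worst, hence is sufficient. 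The row $i=N$ coming from \eqref{c-dDirichletschemelastline} is identical under the reflection $j\mapsto N+1-j$.

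I expect the delicate point to be exactly this last step. Because the one-sided third-order boundary flux \eqref{bc-third} forces $\widetilde D_x$ to have $O(1)$, mixed-sign entries in the boundary rows, it is not obvious a priori that \emph{any} convex split of the weighting row makes $H_1$ monotone; one has to exhibit the split and track which slot is the bottleneck in order to obtain the sharp constants $\tfrac{4}{19}$ and $\tfrac{695}{1596}$. The remaining ingredients are routine: checking the zero-row-sum of $\widetilde D_x$ and $\widetilde D_{xx}$ at the boundary rows (needed for the normalization $H_i(m,\dots,m)=m$, $H_i(M,\dots,M)=M$), and observing that the two CFL inequalities can be imposed simultaneously since $\lambda$ multiplies only $f$-terms and $\mu$ only $g$-terms.
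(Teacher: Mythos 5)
Your proposal is correct and is essentially the argument the paper intends but omits ("we state the weak monotonicity without proof"): it is the same row-wise convex splitting of the weighting coefficients into a convective and a diffusive part, with monotonicity imposed slot by slot, that the paper uses for Theorem \ref{thm-weakmono-convecdiff} and Theorem \ref{thm-2dconvec}. Your budget analysis of the boundary row \eqref{c-dDirichletschemefirstline2} — $a_0\le 4$ forcing $\lambda\max_u|f'(u)|\le\frac{4}{19}$, and $21\lambda\max_u|f'(u)|+84\mu\max_u g'(u)\le 41$ holding with equality at $\frac{695}{1596}$ — reproduces exactly the stated constants, and the interior and right-boundary rows are indeed slack by symmetry, so the proof is complete.
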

We notice that 
\begin{eqnarray*}
 \bar{u}_1^{n+1} & = & \frac{1}{60}(4u_0^{n+1}+41u_1^{n+1}+14u_2^{n+1}+u_3^{n+1})=\frac{u_0^{n+1}+4u_1^{n+1}+u_2^{n+1}}{6} + \frac{1}{10}\frac{u_1^{n+1}+4u_2^{n+1}+u_3^{n+1}}{6}-\frac{1}{10}u_0^{n+1},
\end{eqnarray*}
\begin{eqnarray*}
 \bar{u}_N^{n+1} & = & \frac{1}{60}(u_{N-2}^{n+1}+14u_{N-1}^{n+1}+41u_{N}^{n+1}+4u_{N+1}^{n+1})=  \frac{1}{10} \frac{u_{N-2}^{n+1}+4u_{N-1}^{n+1}+u_{N}^{n+1}}{6} + \frac{u_{N-1}^{n+1}+4u_{N}^{n+1}+u_{N+1}^{n+1}}{6}-\frac{1}{10}u_{N+1}^{n+1}.
\end{eqnarray*}
Recall that the boundary values are given: $u^{n+1}_0=L(t^{n+1})\in[m, M]$ and $u^{n+1}_{N+1}=R(t^{n+1})\in[m, M]$, so we have
\begin{eqnarray*}
 \frac{10}{11}\frac{u_0^{n+1}+4u_1^{n+1}+u_2^{n+1}}{6} + \frac{1}{11}\frac{u_1^{n+1}+4u_2^{n+1}+u_3^{n+1}}{6}\leq \frac{10}{11}M+\frac{1}{11}M=M,\\
 \frac{10}{11}\frac{u_0^{n+1}+4u_1^{n+1}+u_2^{n+1}}{6} + \frac{1}{11}\frac{u_1^{n+1}+4u_2^{n+1}+u_3^{n+1}}{6}\geq \frac{10}{11}m+\frac{1}{11}m=m,\\
\frac{1}{11} \frac{u_{N-2}^{n+1}+4u_{N-1}^{n+1}+u_{N}^{n+1}}{6} + \frac{10}{11}\frac{u_{N-1}^{n+1}+4u_{N}^{n+1}+u_{N+1}^{n+1}}{6}\leq \frac{10}{11}M+\frac{1}{11}M=M,\\
\frac{1}{11} \frac{u_{N-2}^{n+1}+4u_{N-1}^{n+1}+u_{N}^{n+1}}{6} + \frac{10}{11}\frac{u_{N-1}^{n+1}+4u_{N}^{n+1}+u_{N+1}^{n+1}}{6}\geq \frac{10}{11}m+\frac{1}{11}m=m.
\end{eqnarray*}
Thus define $\mathbf{w}^{n+1}=\begin{pmatrix}
 w_{1}^{n+1},
 w_{2}^{n+1},
 w_{3}^{n+1},
 \dots,
 w_{N-1}^{n+1},
 w_{N}^{n+1}
\end{pmatrix}^T$ as follows and we  have:
\begin{eqnarray*}
m \leq w_i^{n+1}:&=&\bar{u}_i^{n+1}\leq M,\quad i=2,\cdots,N-1,\\
m\leq w_1^{n+1}:&=&\frac{10}{11}\frac{u_0^{n+1}+4u_1^{n+1}+u_2^{n+1}}{6} + \frac{1}{11}\frac{u_1^{n+1}+4u_2^{n+1}+u_3^{n+1}}{6}\leq M,\\
m\leq w_N^{n+1}:&=&\frac{1}{11}\frac{u_{N-3}^{n+1}+4u_{N-2}^{n+1}+u_{N-1}^{n+1}}{6}+\frac{10}{11}\frac{u_{N-2}^{n+1}+4u_{N-1}^{n+1}+u_{N}^{n+1}}{6}\leq M.
\end{eqnarray*}
By the notations above, we get
\begin{eqnarray}\label{reconstructweighting}
\mathbf{w}^{n+1}=K\bar{\mathbf u}^{n+1}+\mathbf{u}^{n+1}_{bc}=\widetilde{\widetilde{W}}\tilde{\mathbf u},
\end{eqnarray}
\begin{eqnarray*}
             K=\begin{pmatrix}
              \frac{10}{11} & & &  \\
              & 1 & & & & \\    
               & & \ddots & & \\
               & & & 1 &  \\
               & & & & & \frac{10}{11}
             \end{pmatrix}_{N\times N},
\mathbf{u}_{bc}=\frac{1}{11}\begin{pmatrix}
 u_{0} \\
 0\\
 \vdots\\
 0\\
 u_{N+1}\end{pmatrix}_{N\times 1},
 \widetilde{\widetilde{W}}=\frac{1}{72}\begin{pmatrix}
              \frac{120}{11} & \frac{492}{11} & \frac{168}{11} & \frac{12}{11} & & & & \\
              1 & 14 & 42 & 14 & 1 & &  \\  
               & \ddots & \ddots & \ddots & \ddots & \ddots\\
               & & 1 & 14 & 42 & 14 & 1 \\
               & & & \frac{12}{11} & \frac{168}{11} & \frac{492}{11} & \frac{120}{11} 
             \end{pmatrix}_{N\times(N+2)}.
\end{eqnarray*}

We notice that $\widetilde{\widetilde{W}}$ can be factored as a product of two tridiagonal matrices:
{\setlength\arraycolsep{2pt}
\begin{equation*}
\frac{1}{72}\begin{pmatrix}
              \frac{120}{11} & \frac{492}{11} & \frac{168}{11} & \frac{12}{11} & & & & \\
              1 & 14 & 42 & 14 & 1 & &  \\  
               & \ddots & \ddots & \ddots & \ddots & \ddots\\
               & & 1 & 14 & 42 & 14 & 1 \\
               & & & \frac{12}{11} & \frac{168}{11} & \frac{492}{11} & \frac{120}{11} 
             \end{pmatrix}=
  \frac{1}{12}\begin{pmatrix}
              \frac{120}{11}&  \frac{12}{11} & & & & \\
              1 & 10 & 1 & & & \\ 
               & \ddots & \ddots & \ddots & \\
               & & 1 & 10 &1 \\
               & & &  \frac{12}{11} &  \frac{120}{11}
             \end{pmatrix}_{N\times N}\frac{1}{6}\begin{pmatrix}
              1 & 4 & 1 & & & & & \\
              & 1 & 4 & 1 & & & & \\  
               & & \ddots & \ddots & \ddots &\\
               & & & 1 & 4 & 1 & \\
               & & & & 1 & 4 & 1 
             \end{pmatrix}_{N\times(N+2)},
\end{equation*}}
which can be denoted as $\widetilde{\widetilde{W}} = \widetilde{W}_2\widetilde{W}_1.$
Fortunately, all the diagonal entries of $\widetilde{W}_1$ and  $\widetilde{W}_2$ are in the form of $\frac{c}{c+2}, c>2$. 
So given $\bar u_i=\widetilde W u_i\in [m, M]$, we construct $w^{n+1}_i\in[m,M]$.
We can apply the limiter in Algorithm \ref{alg_limiter2} twice to enforce $u_i\in [m, M]$:
\begin{enumerate}
 \item Given $u^n_i$ for all $i$, use the scheme \eqref{bcscheme} to obtain $\bar u^{n+1}_i\in [m, M]$ for $i=1, \cdots, N$. 
 Then construct $w^{n+1}_i\in [m, M]$ for $i=1, \cdots, N$ by  \eqref{reconstructweighting}.
 \item Notice that $\widetilde{W}_2$ is a matrix of size $N\times N$. Compute $\mathbf v=\widetilde{W}_2^{-1}\mathbf{w}^{n+1}$. 
 Apply the limiter in Algorithm \ref{alg_limiter2} to $v_i$ and let $\bar v_i$ denote the output values. 
 Since we have $\widetilde{W}_2 v_i\in [m, M]$, i.e.,
 \begin{eqnarray*}
m\leq& \frac{10}{11}v_1+\frac{1}{11}v_2&\leq M,\\  
m\leq& \frac{1}{12}v_1+\frac{10}{12}v_2+\frac{1}{12}v_3&\leq M,\\  
&\vdots&\\
m\leq& \frac{1}{12}v_{N-2}+\frac{10}{12}v_{N-1}+\frac{1}{12}v_{N}&\leq M,\\  
m\leq& \frac{1}{11}v_{N-1}+\frac{10}{11}v_N&\leq M.
 \end{eqnarray*}
Following the discussions in Section \ref{sec-limiter}, it implies $\bar v_i\in[m, M]$.
\item Obtain values of $u^{n+1}_i$, $i=1,\cdots, N$ by solving a $N\times N$ system:
\[\frac{1}{6}\begin{pmatrix}
               4 & 1 & & & &  \\
               1 & 4 & 1 & & & \\   
                & \ddots & \ddots & \ddots &\\
                & & 1 & 4 & 1 & \\
                & & & 1 & 4 
             \end{pmatrix}
             \begin{pmatrix}
               u^{n+1}_1\\ u^{n+1}_2\\ \vdots \\ u^{n+1}_{N-1}\\ u^{n+1}_N
             \end{pmatrix}
             =\begin{pmatrix}
               \bar v_1\\ \bar v_2\\ \vdots \\ \bar v_{N-1}\\ \bar v_N
             \end{pmatrix}-\frac{1}{6}\mathbf{u}^{n+1}_{bc}.
             \]
\item Apply the limiter in Algorithm \ref{alg_limiter2} to $u^{n+1}_i$  to ensure $u^{n+1}_i\in[m, M]$. 
\end{enumerate}

\section{Numerical tests}
\label{sec-test}

\subsection{One-dimensional problems with periodic boundary conditions}
In this subsection, we test the fourth order and eighth order accurate compact finite difference schemes with the bound-preserving limiter.
The time step is taken to satisfy both the CFL condition required for weak monotonicity in Theorem \ref{thm-weakmono-convec} and Theorem \ref{thm-weakmono-convecdiff}
and the SSP coefficient for high order SSP time discretizations.

\begin{ex}{One-dimensional linear convection equation}.\label{1dconvectioneqnexmp}
Consider $u_t+u_x=0$ with and initial condition $u_0(x)$ and periodic boundary conditions on the interval $[0,2\pi]$. 
The $L^1$ and $L^{\infty}$ errors for the fourth order scheme with a smooth initial condition at time $T=10$ are listed in Table \ref{table-accuracy1} where $\Delta x=\frac{2\pi}{N}$, the time step is taken as $\Delta t=C_{ms} \frac13 \Delta x$ for the multistep method,
and $\Delta t=5 C_{ms} \frac13 \Delta x$ for the  Runge-Kutta method so that the number of spatial discretization operators computed  is the same
as in the one for the multistep method. 
We can observe the fourth order accuracy for the multistep method and obvious order reductions for the Runge-Kutta method.

The errors for smooth initial conditions at time $T=10$ for the eighth order accurate scheme are listed in Table \ref{1dconvection8th}.
For the eighth order accurate scheme, the time step to achieve the weak monotonicity is $\Delta t=C_{ms} \frac{6}{25} \Delta x$ 
for the  fourth-order SSP multistep method. On the other hand, we need to set $\Delta t=\Delta x^2$ in fourth order accurate time discretizations
to verify the eighth order spatial accuracy. 
To this end,
the time step is taken as $\Delta t=C_{ms} \frac{6}{25} \Delta x^2$ for the  multistep method,
and $\Delta t=5 C_{ms} \frac{6}{25} \Delta x^2$ for the Runge-Kutta method.
We can observe the eighth order accuracy for the multistep method and the order reduction for $N=160$ is due to the roundoff errors. 
We can also see an obvious order reduction for the Runge-Kutta method.

\begin{table}[ht]
\centering
\caption{The fourth order accurate compact finite difference scheme with the bound-preserving limiter on a uniform $N$-point grid for the linear convection with initial data $u_0(x)=\frac12+\sin^4(x)$.}
\resizebox{\textwidth}{!}{  \begin{tabular}{|c|c c|c c|cc| c c|}
\cline{1-9}
& \multicolumn{4}{|c|} {Fourth order SSP multistep} & \multicolumn{4}{|c|} {Fourth order SSP Runge-Kutta}  \\
\cline{1-9}
\hline  N &  $L^1$ error  &  order & $L^\infty$ error & order & $L^1$ error & order & $L^\infty$ error & order\\
\cline{1-9}
% & \multicolumn{8}{|c|}{$u_0(x)=0.5+\sin(x)$}\\
% \cline{1-9}
%  $20$  & 3.49E-4 & - & 5.50E-4 & -& 3.92E-4 & - & 7.90E-4 & - \\
% \cline{1-9}
%  $40$  & 2.16E-5 & 4.00 & 3.40E-5 & 4.01 & 4.60E-5 & 3.09 & 1.20E-4 & 2.72 \\
% \cline{1-9}
%  $80$  & 1.35E-6 &  4.00 & 2.12E-6 & 4.00 & 8.40E-6 & 2.45 & 3.58E-5 & 1.75 \\ 
%  \cline{1-9}
%  $160$  & 8.42E-8 & 4.00 & 1.32E-7 & 4.00 & 1.61E-6 & 2.38 & 8.64E-6 & 2.05  \\
%  \cline{1-9}
%  $320$  & 5.26E-9 & 4.00 & 8.26E-9 & 4.00 & 2.97E-7 & 2.44 & 2.44E-6 & 1.82 \\
\hline
% \cline{1-9}
% & \multicolumn{8}{|c|}{}\\
\cline{1-9}
 $20$  & 3.44E-2 & - & 6.49E-2 & -& 3.41E-2 & - & 6.26E-2  & - \\
\cline{1-9}
 $40$  & 3.12E-3 & 3.47 & 6.19E-3 & 3.39 & 3.14E-3 & 3.44 & 6.62E-3 & 3.24\\
\cline{1-9}
$80$  & 1.82E-4 &  4.10 & 2.95E-4 & 4.39 & 1.86E-4 & 4.08 & 3.82E-4 & 4.11  \\ 
\cline{1-9}
$160$  & 1.10E-5 & 4.05 & 1.85E-5 & 4.00 & 1.29E-5 & 3.85 & 4.48E-5 & 3.09 \\
\cline{1-9}
$320$  & 6.81E-7 & 4.02 & 1.15E-6 & 4.01 & 1.42E-6 & 3.18 & 1.03E-5 & 2.13\\
\hline
\cline{1-9}
% & \multicolumn{8}{|c|}{$u_0(x)=0.5+\sin^8(x)$}\\
% \cline{1-9}
%  $20$  & 6.50E-2 & - & 1.87E-1 & - & 6.63E-2 & - & 1.76E-1 & -\\
% \cline{1-9}
%  $40$  & 1.13E-2 & 2.53 & 3.62E-2 & 2.37 & 1.12E-2 & 2.56 & 3.54E-2 & 2.31 \\
% \cline{1-9}
%  $80$  & 6.77E-4 & 4.06 & 2.00E-3 & 4.18 & 6.81E-4  & 4.04 & 2.12E-3 & 4.06  \\ 
%  \cline{1-9}
%  $160$ & 4.13E-5 & 4.04 & 1.22E-4 & 4.04 & 4.33E-5 & 3.97 & 1.73E-4 & 3.61  \\
%  \cline{1-9}
%  $320$ & 2.57E-6 & 4.01 & 7.56E-6 & 4.01 & 3.42E-6 & 3.67 & 2.16E-5 & 3.00\\
\hline
\end{tabular}}
\label{table-accuracy1}
\end{table}

\begin{table}[ht]
\centering
\caption{The eighth order accurate compact finite difference scheme with the bound-preserving limiter on a uniform $N$-point grid for 
the linear convection with initial data $u_0(x)=\frac12+\frac12\sin^4(x)$.}
\begin{tabular}{|c|c c|c c|cc| c c|}
\cline{1-9}
& \multicolumn{4}{|c|} {Fourth order SSP multistep} & \multicolumn{4}{|c|} {Fourth order SSP Runge-Kutta}  \\
\cline{1-9}
\hline  N &  $L^1$ error  &  order & $L^\infty$ error & order & $L^1$ error & order & $L^\infty$ error & order\\
\hline
 $10$  & 6.31E-2 & - & 1.01E-1 & -& 6.44E-2 & - & 9.58E-2 & - \\
\cline{1-9}
 $20$  & 3.35E-5 & 7.55 & 5.59E-4 & 7.49 & 3.39E-4 & 7.57 & 5.79E-4 & 7.37 \\
\cline{1-9}
 $40$  & 9.58E-7 & 8.45 & 1.49E-6 & 8.55 & 1.52E-6 & 7.80 & 4.32E-6 & 7.06 \\ 
 \cline{1-9}
 $80$  & 3.50E-9 & 8.10 & 5.51E-9 & 8.08 & 5.34E-8 & 4.83 & 2.31E-7 & 4.23  \\
 \cline{1-9}
 $160$  & 6.57E-11 & 5.74 & 1.01E-10 & 5.77 & 2.40E-9 & 4.48 & 1.45E-8 & 3.99 \\
\hline
\end{tabular}
\label{1dconvection8th}
\end{table}

Next, we consider the following discontinuous initial data:
\begin{equation}
\label{initial-step}
 u_0(x)=\left\{\begin{array}{ll}
 1, & \quad \textrm{if }\quad 0<x\leq\pi,\\
 0, & \quad \textrm{if }\quad \pi<x\leq 2\pi.
\end{array}\right.
\end{equation}
See Figure \ref{1dconvection_shockimg} for the performance of the bound-preserving limiter and the TVB limiter on the fourth order scheme.
We observe that the TVB limiter can reduce oscillations but cannot remove the overshoot/undershoot. When both limiters
are used, we can obtain a non-oscillatory bound-preserving numerical solution.
See Figure \ref{1dconvection_shock8thimg} for the performance of the bound-preserving limiter on the eighth order scheme. 

 \begin{figure}[ht]
 \subfigure[without any limiter]{\includegraphics[scale=0.45]{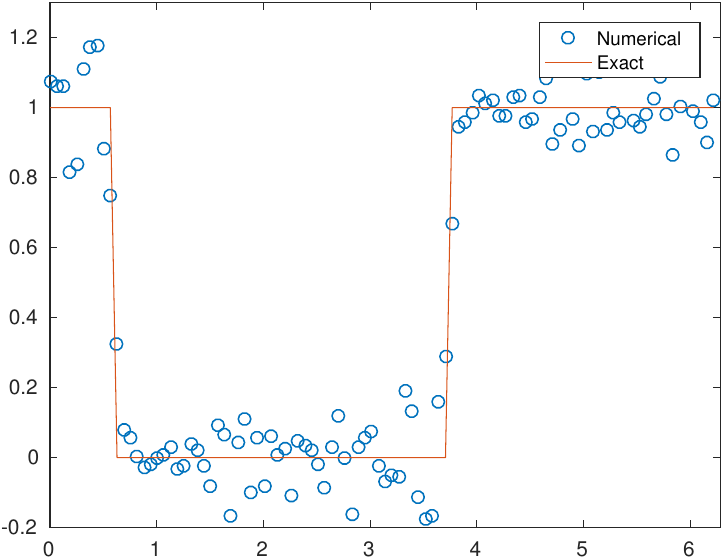} }
 \hspace{.1in}
 \subfigure[with only the bound-preserving limiter]{\includegraphics[scale=0.45]{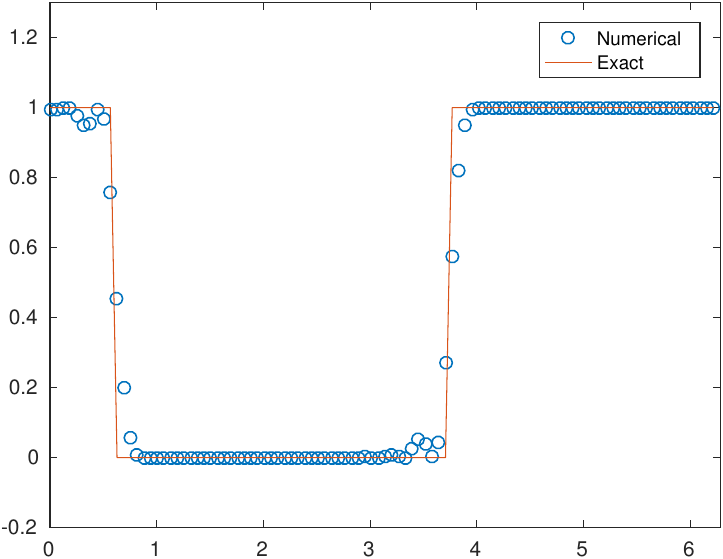}}\\
  \subfigure[with only the TVB limiter]{\includegraphics[scale=0.45]{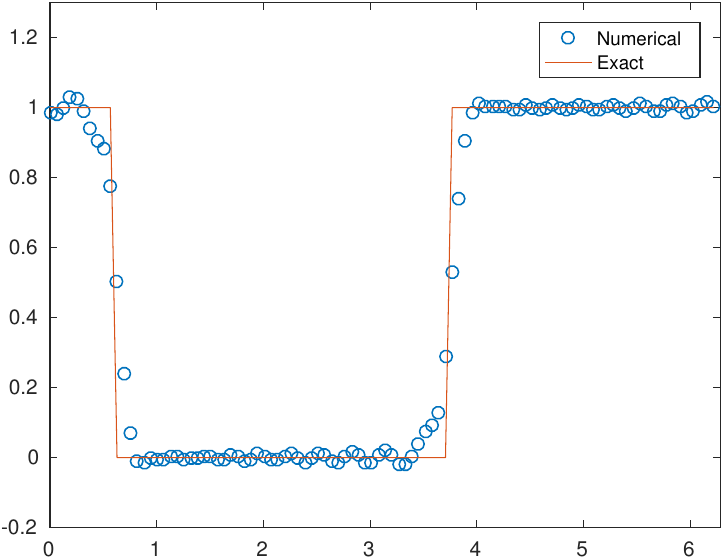}  }
 \hspace{.1in}
 \subfigure[with both limiters]{\includegraphics[scale=0.45]{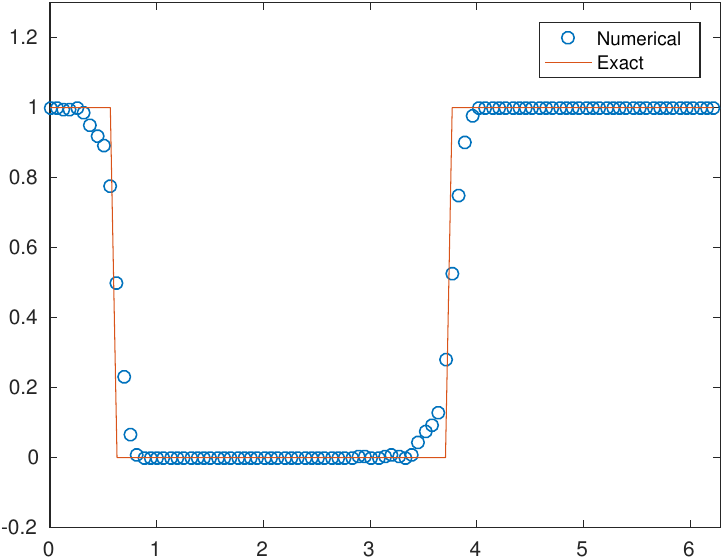}}
\caption{Linear convection  at $T=10$. Fourth order compact finite difference and fourth order SSP multistep with 
$\Delta t=\frac{1}{3}C_{ms}\Delta x$ and $100$ grid points. The TVB parameter in \eqref{minmod} is  $p=5$. }
\label{1dconvection_shockimg}
 \end{figure}

 \begin{figure}[ht]
 \subfigure[Without any limiter.]{\includegraphics[scale=0.45]{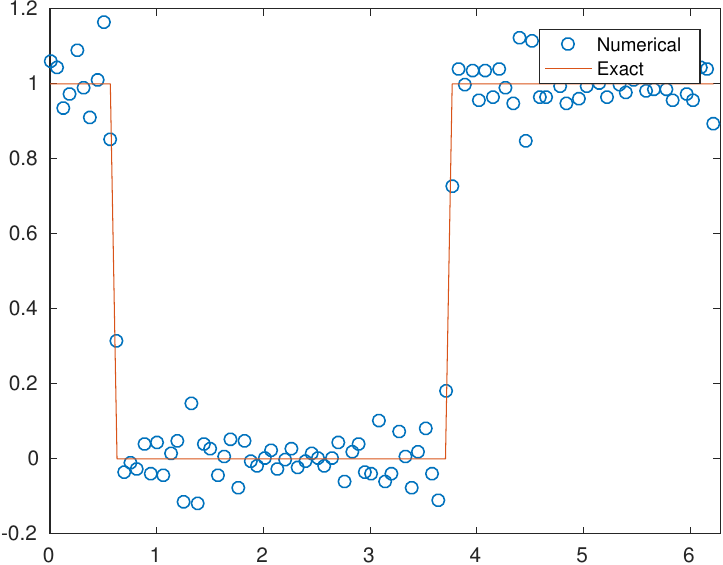} }
 \hspace{.1in}
 \subfigure[With the bound-preserving limiter.]{\includegraphics[scale=0.45]{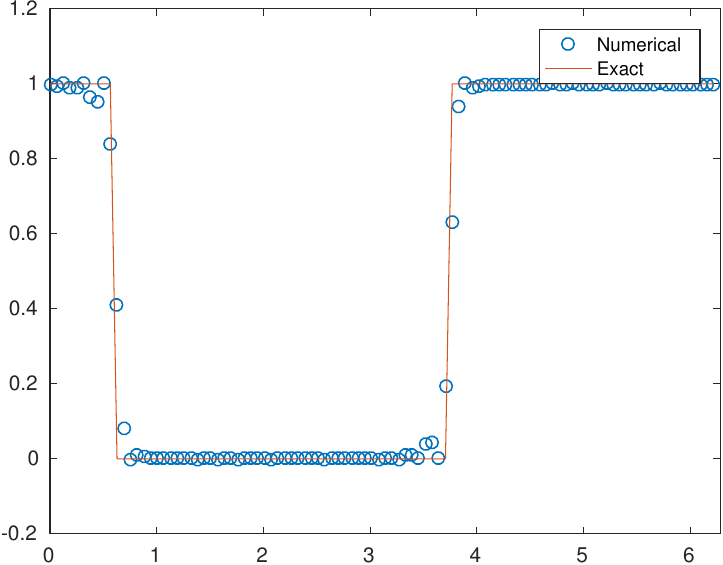}}
\caption{Linear convection at $T=10$. Eighth order compact finite difference and the fourth order SSP multistep method with 
$\Delta t=C_{ms} \frac{6}{25} \Delta x$ and $100$ grid points }
\label{1dconvection_shock8thimg}
 \end{figure}

\end{ex}
   
\begin{ex} {One dimensional Burgers' equation.} \label{1dburgerseqn}     

Consider the Burgers' equation $u_t+(\frac{u^2}{2})_x=0$ with a periodic boundary condition on $[-\pi,\pi]$.
For the initial data $u_0(x)=\sin(x)+0.5$, the exact solution is smooth up to $T=1$, then it develops a moving shock.
We list the errors of the fourth order scheme at $T=0.5$ in Table \ref{1dburgerstable}
where 
the time step is  $\Delta t=\frac{1}{3}C_{ms}\Delta x$ for SSP multistep and  $\Delta t=\frac{5}{3}C_{ms}\Delta x$ for SSP Runge-Kutta with $\Delta x=\frac{2\pi}{N}$. 
We observe the expected fourth order accuracy for the multistep time discretization. 
At $T=1.2$, the exact solution contains a shock near $x=-2.5$. The errors on the smooth region $[-2,\pi]$ at $T=1.2$ are listed in Table \ref{1dburgerstable2} where high order accuracy is lost. Some high order schemes can still be high order accurate on a smooth region away from the shock in this test, see \cite{zhang2010genuinely}. We emphasize that in all our numerical tests, Step III in Algorithm \ref{alg_limiter2} was never triggered. In other words, set of Class I is rarely encountered in practice. So the limiter Algorithm \ref{alg_limiter2} is a local three-point stencil limiter for this particular example rather than a global one. The loss of accuracy in smooth regions is possibly due to the fact that compact finite difference operator is defined globally thus the error near discontinuities will pollute the whole domain.

The solutions of the fourth order compact finite difference and the fourth order SSP multistep with the bound-preserving limiter and the TVB limiter at time $T=2$ are shown in Figure \ref{1dburgersimg}, for which the exact solution is in the range $[-0.5, 1.5]$. 
The  TVB limiter alone does not eliminate the overshoot or undershoot. When both the bound-preserving and the TVB limiters
are used, we can obtain a non-oscillatory bound-preserving numerical solution.
\begin{table}[ht]
\centering
\caption{The fourth order scheme with  limiter for the Burgers' equation. Smooth solutions. }
  \resizebox{\textwidth}{!}{   \begin{tabular}{|c|c c|c c|cc| c c|}
\cline{1-9}
& \multicolumn{4}{|c|} {Fourth order SSP multistep} & \multicolumn{4}{|c|} {Fourth SSP Runge-Kutta}  \\
\cline{1-9}
\hline  N &  $L^1$ error  &  order & $L^\infty$ error & order & $L^1$ error & order & $L^\infty$ error & order\\
\cline{1-9}
 $20$  & 6.92E-4 & - & 5.24E-3 & -& 7.79E-4 & - & 5.61E-3 & - \\
\cline{1-9}
 $40$  & 3.28E-5 & 4.40 & 3.62E-4 & 3.85 & 4.45E-5 & 4.13& 4.77E-4 & 3.56 \\
\cline{1-9}
 $80$  & 1.90E-6 &  4.11 & 2.00E-5 & 4.18 & 3.53E-6 & 3.66 & 2.09E-5 & 4.51  \\ 
 \cline{1-9}
 $160$  & 1.15E-6 & 4.04 & 1.24E-6 & 4.01 & 4.93E-7 & 2.84 & 5.47E-6 & 1.93  \\
 \cline{1-9}
 $320$  & 7.18E-9 & 4.00 & 7.67E-8 & 4.01 & 8.78E-8 & 2.49 & 1.73E-6 & 1.66 \\
\hline
\end{tabular}}
\label{1dburgerstable}
\end{table}

\begin{table}[ht]
\centering
\caption{Burgers' equation. The errors are measured in the smooth region away from the shock. }
  \resizebox{\textwidth}{!}{   \begin{tabular}{|c|c c|c c|cc| c c|}
\cline{1-9}
& \multicolumn{4}{|c|} {Fourth order SSP multistep} & \multicolumn{4}{|c|} {Fourth SSP Runge-Kutta}  \\
\cline{1-9}
\hline  N &  $L^1$ error  &  order & $L^\infty$ error & order & $L^1$ error & order & $L^\infty$ error & order\\
\cline{1-9}
 $20$  & 1.59E-2 & - & 5.26E-2 & - & 1.62E-2 & - & 5.39E-2 & - \\
\cline{1-9}
 $40$   & 2.10E-3 & 2.92& 1.38E-2 & 1.93 & 2.11E-3 & 2.94 & 1.39E-2 & 1.95\\
\cline{1-9}
 $80$ & 6.35E-4 & 1.73 & 6.56E-3 & 1.07   & 6.48E-4 &  1.70 & 7.01E-3 & 0.99\\ 
 \cline{1-9}
 $160$   & 1.48E-4 & 2.10 & 1.65E-3 & 1.99  & 1.51E-4 & 2.10 & 1.66E-3 & 2.08\\
 \cline{1-9}
 $320$   & 3.12E-5 & 2.25 & 6.10E-4 & 1.43 & 3.14E-5 & 2.26 & 6.13E-4 & 1.44\\
% \hline
% $640$   & 8.34E-6 & 1.90 & 1.87E-4 & 1.70 & 8.31E-6 & 1.92 & 1.87E-4 & 1.71\\
% \hline
% $1280$   & 2.16E-6 & 1.95 & 7.18E-5 & 1.38 & 2.16E-6 & 1.94 & 7.17E-5 & 1.38\\
\hline
\end{tabular}}
\label{1dburgerstable2}
\end{table}

 \begin{figure}[ht]
 
 \subfigure[without any limiter]{\includegraphics[scale=0.45]{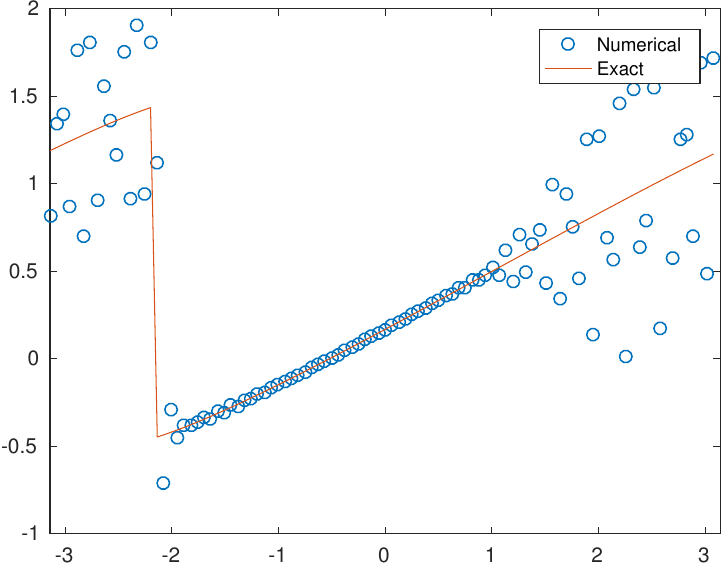} }
 \hspace{.1in}
%  \subfigure[with only the bound-preserving limiter]{\includegraphics[scale=0.45]{1dburgers_shockwotvbwl-eps-converted-to.pdf}}\\
%   \subfigure[ with only the TVB limiter]{\includegraphics[scale=0.45]{1dburgers_shockwtvbwol-eps-converted-to.pdf}  }
 \hspace{.1in}
 \subfigure[with both limiters]{\includegraphics[scale=0.45]{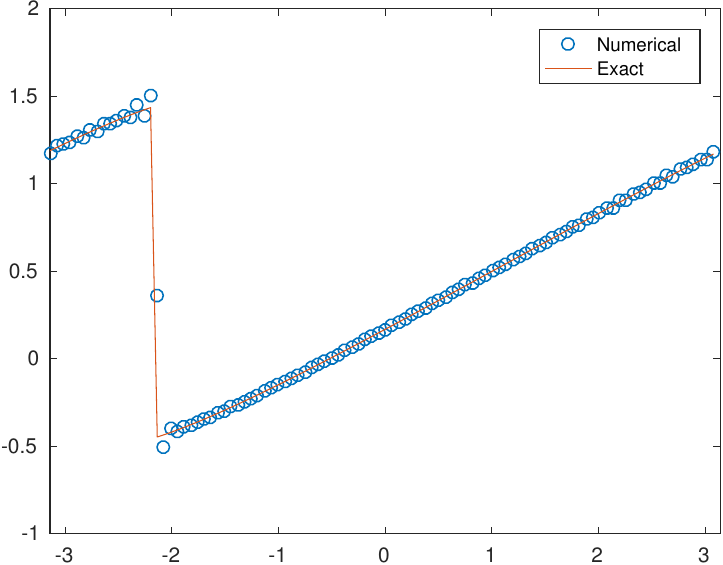}}
\caption{Burgers' equation at $T=2$. Fourth order compact finite difference with 
$\Delta t=\frac{1}{3\max_x|u_0(x)|}C_{ms}\Delta x$ and $100$ grid points. The TVB  parameter in \eqref{minmod} is set as $p=5$.
 }
 \label{1dburgersimg}
 \end{figure}

  \end{ex}
   
\begin{ex}{One dimensional convection diffusion equation.}\label{1dheateqn}

Consider the linear convection diffusion equation $u_t+cu_x=du_{xx}$ with a periodic boundary condition on $[0,2\pi]$.
For the initial $u_0(x)=\sin(x)$,
the exact solution is $u(x,t)=exp(-dt)sin(x-ct)$ which is in the range $[-1,1]$.
We set $c=1$ and $d=0.001$.
The errors of the fourth order scheme at $T=1$ are listed in the Table \ref{1dcondifftable} in which $\Delta t=C_{ms}min\{\frac{1}{6}\frac{\Delta x}{c},\frac{5}{24}\frac{\Delta x^2}{d}\}$ for SSP multistep 
and  $\Delta t=5C_{ms}min\{\frac{1}{6}\frac{\Delta x}{c},\frac{5}{24}\frac{\Delta x^2}{d}\}$ for SSP Runge-Kutta with $\Delta x=\frac{2\pi}{N}$. 
We observe the expected
fourth order accuracy for the SSP multistep method.
Even though the bound-preserving limiter is triggered, the order reduction for the Runge-Kutta method is not observed for the convection diffusion equation. 
One possible explanation is that the source of such an order reduction is due to the lower order accuracy of inner stages in the Runge-Kutta method, which 
is proportional to the time step. Compared to $\Delta t=\mathcal O(\Delta x)$ for a pure convection, the time step is $\Delta t=\mathcal O(\Delta x^2)$ in 
a convection diffusion problem thus the order reduction is much less prominent. 
See the Table \ref{1dcondifftable8th} for the errors at $T=1$ of the eighth order scheme  with $\Delta t=C_{ms}\min\{\frac{3}{25}\frac{\Delta x^2}{c},\frac{131}{530}\frac{\Delta x^2}{d}\}$ for SSP multistep
and $\Delta t=5C_{ms}\min\{\frac{3}{25}\frac{\Delta x^2}{c},\frac{131}{530}\frac{\Delta x^2}{d}\}$  for SSP Runge-Kutta where $\Delta x=\frac{2\pi}{N}$.

\begin{table}[ht]
\centering
\caption{The fourth order compact finite difference with  limiter for linear convection diffusion. }
\resizebox{\textwidth}{!}{    \begin{tabular}{|c|c c|c c|cc| c c|}
\cline{1-9}
& \multicolumn{4}{|c|} {Fourth order SSP multistep} & \multicolumn{4}{|c|} {Fourth order SSP Runge-Kutta}  \\
\cline{1-9}
\hline  N &  $L^1$ error  &  order & $L^\infty$ error & order & $L^1$ error & order & $L^\infty$ error & order\\
\cline{1-9}
 $20$  & 3.30E-5 & - & 5.19E-5 & -& 3.60E-5 & - & 6.09E-5 & - \\
\cline{1-9}
 $40$  & 2.11E-6 & 3.97 & 3.30E-6 & 3.97  & 2.44E-6 & 4.00 & 3.52E-6 & 4.12 \\
\cline{1-9}
 $80$  & 1.33E-7 &  3.99 & 2.09E-7 & 3.98  & 1.37E-7 & 4.04 & 2.15E-7 & 4.03  \\ 
 \cline{1-9}
 $160$  & 8.36E-9 & 3.99 & 1.31E-8 & 3.99 & 8.46E-9 & 4.02 & 1.33E-8 & 4.02  \\
 \cline{1-9}
 $320$  & 5.24E-10 & 4.00 & 8.23E-10 & 4.00 & 5.29E-10 & 4.00 & 8.31E-10 & 4.00 \\
\hline
\end{tabular}}
\label{1dcondifftable}
\end{table}

\begin{table}[ht]
\centering
\caption{The eighth order compact finite difference 
with  limiter for linear convection diffusion.}
\begin{tabular}{|c|c c|c c|cc| c c|}
\cline{1-9}
& \multicolumn{4}{|c|} {SSP multistep} & \multicolumn{4}{|c|} {SSP Runge-Kutta}  \\
\cline{1-9}
\hline  N &  $L^1$ error  &  order & $L^\infty$ error & order & $L^1$ error & order & $L^\infty$ error & order\\
\cline{1-9}
 $10$  & 3.85E-7 & - & 5.96E-7 & -& 3.85E-7 & - & 5.95E-7 & - \\
\cline{1-9}
 $20$  & 1.40E-9 & 8.10 & 2.20E-9 & 8.08  & 1.42E-9 & 8.08 & 2.23E-9 & 8.06\\
\cline{1-9}
 $40$  & 5.46E-12 &  8.01 & 8.60E-12 & 8.00  & 5.48E-12 & 8.02 & 8.69E-12 & 8.01  \\ 
 \cline{1-9}
 $80$  & 3.53E-12 & 0.63 & 6.46E-12 & 0.41 & 1.06E-12 & 2.37 & 3.29E-12 & 1.40  \\
\hline
\end{tabular}
\label{1dcondifftable8th}
\end{table}

\end{ex}

\begin{ex}{Nonlinear degenerate diffusion equations.}

 A representative test for validating the positivity-preserving property of a scheme solving nonlinear diffusion equations is the porous medium equation,
 $
  u_t=(u^m)_{xx}, m>1.
$
We consider the Barenblatt analytical solution given by
$$
 B_m(x,t)=t^{-k}[(1-\frac{k(m-1)}{2m}\frac{|x|^2}{t^{2k}})_+]^{1/(m-1)},
$$
where $u_+=\max\{u,0\}$ and $k=(m+1)^{-1}$. 
The initial data is the Barenblatt solution at $T=1$ with periodic boundary conditions on $\left[-6, 6\right]$. 
The solution is computed till time $T = 2$. 
High order schemes without any particular positivity treatment will generate negative solutions \cite{zhang2012maximum2, zhang2013maximum, Srinivasan2018positivity}.
See Figure \ref{pme} for solutions of the fourth order scheme and  
 the SSP multistep method with $\Delta t=\frac{1}{3m}C_{ms}\Delta x$ and $100$ grid points. Numerical solutions are 
strictly nonnegative. Without the bound-preserving limiter, negative values emerge near the sharp gradients. 

 \begin{figure}[ht]
 
% \subfigure[$m = 2.$]{\includegraphics[scale=0.45]{1dPMEm=2ms4wl-eps-converted-to.pdf} }
% \hspace{.15in}
% \subfigure[$m = 3.$]{\includegraphics[scale=0.45]{1dPMEm=3ms4wl-eps-converted-to.pdf}}
 
 \subfigure[$m = 5.$]{\includegraphics[scale=0.45]{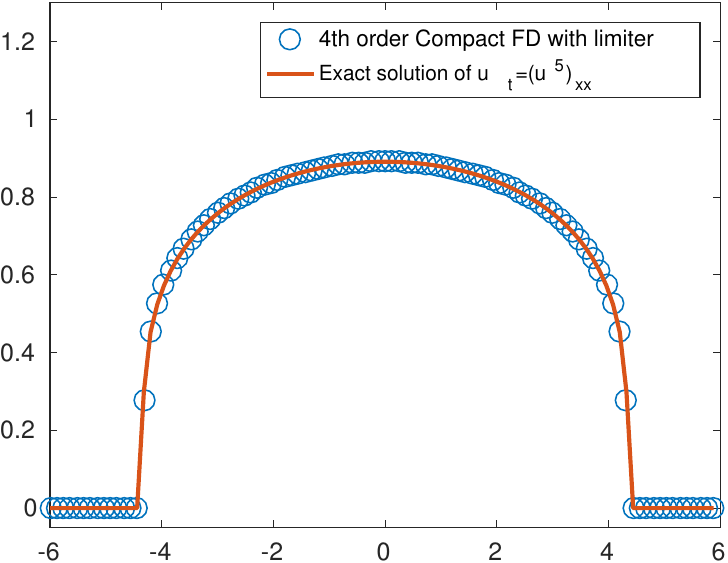}}
 \hspace{.15in}
 \subfigure[$m = 8.$]{\includegraphics[scale=0.45]{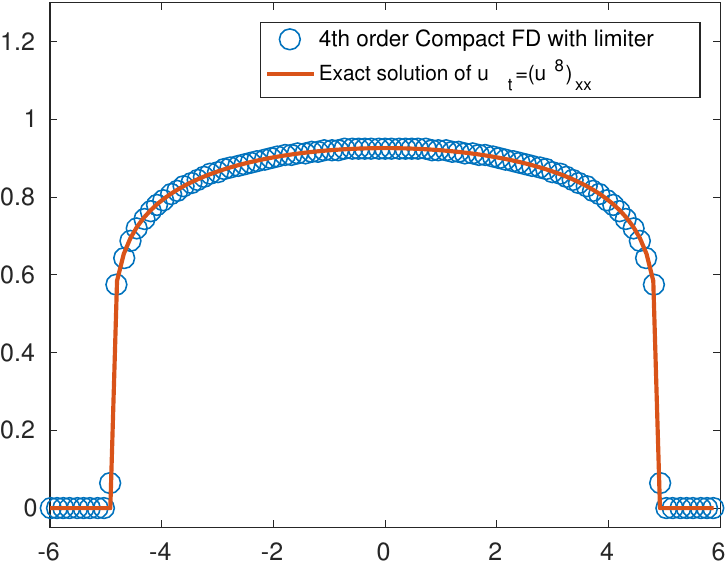}}
\caption{The fourth order compact finite difference  with  limiter for the  porous medium equation.}
\label{pme}
 \end{figure}
 
\end{ex}

\subsection{One-dimensional problems with non-periodic boundary conditions}

\begin{ex}{One-dimensional Burgers' equation with inflow-outflow boundary condition.}
Consider $u_t+(\frac{u^2}{2})_x=0$ on interval $[0,2\pi]$ with inflow-outflow boundary condition and smooth initial condition $u(x,0)=u_0(x)$.
Let $u_0(x)=\frac12\sin(x)+\frac12\geq 0$, we can set the left boundary condition as inflow $u(0,t)=L(t)$ and right boundary as outflow,
where $L(t)$ is obtained from the exact solution of initial-boundary value problem for the same initial data and a periodic boundary condition. We test the fourth order compact finite difference and fourth order SSP multistep method with the bound-preserving limiter.
The errors at  $T=0.5$ are listed in Table \ref{1dinflowoutflowtable} where $\Delta t=C_{ms}\Delta x$ and $\Delta x=\frac{2\pi}{N}$.
See  Figure \ref{1dinflowoutflowimg}  for the shock at  $T=3$ on a $120$-point grid with $\Delta t=C_{ms}\Delta x$.

\begin{table}[ht]
\centering
\caption{Burgers' equation. The fourth order scheme. Inflow and outflow boundary conditions.  }
  \begin{tabular}{|c|cc|cc|}
\hline  N &  $L^\infty$ error  &  order & $L^1$ error & order  \\
\hline
$20$  & 1.15E-4 & - & 7.80E-4 & - \\
\hline
  $40$  & 4.10E-6 &  4.81 & 2.00E-5 & 5.29\\ 
 \hline
  $80$  & 2.17E-7 & 4.24 & 9.43E-7 & 4.40\\
\hline
  $160$  & 1.22E-8 & 4.15 & 4.87E-8 & 4.28\\
\hline
  $320$ & 7.41E-10 & 4.05 & 2.87E-9 & 4.09\\
\hline
\end{tabular}
\label{1dinflowoutflowtable}
\end{table}

\begin{figure}[ht]
\centering
 \subfigure[Without any limiter.]{\includegraphics[scale=0.4]{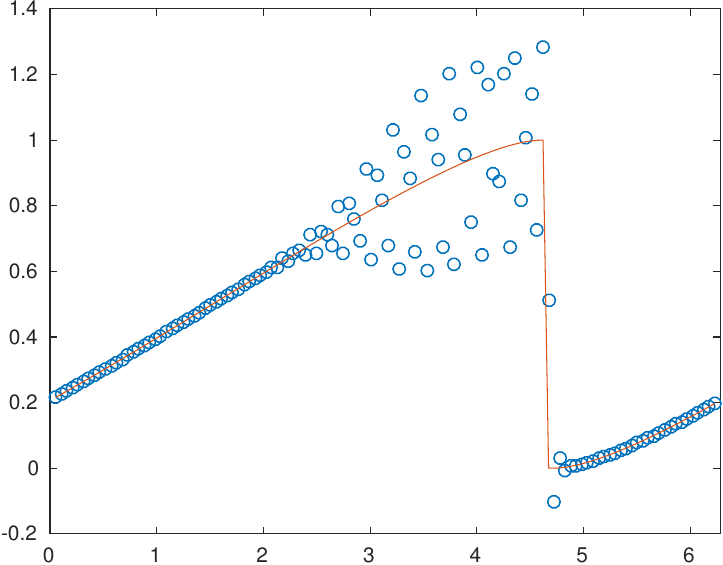}}
 \subfigure[With the bound-preserving limiter.]{\includegraphics[scale=0.4]{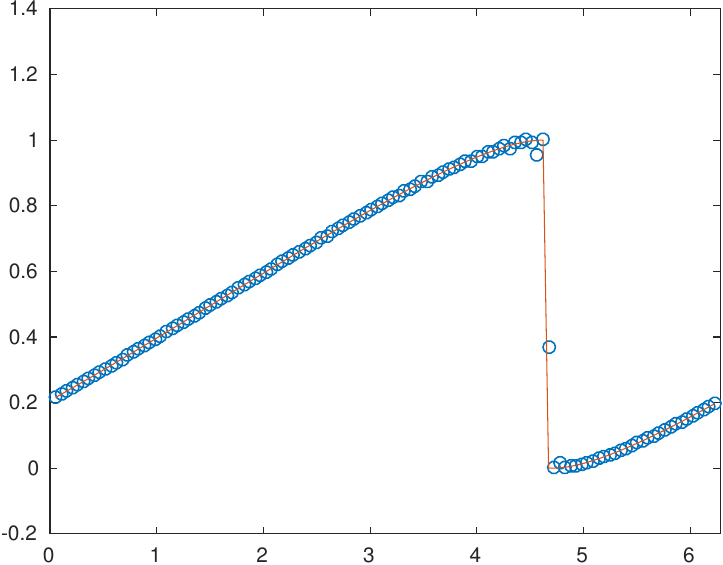} }
 \hspace{.15in}
 \caption{Burgers' equation. The fourth order scheme. Inflow and outflow boundary conditions. }
 \label{1dinflowoutflowimg}
\end{figure}
\end{ex}

\begin{ex}{One-dimensional convection diffusion equation with Dirichlet boundary conditions.}
 We consider equation $u_t+cu_x=du_{xx}$ on $[0,2\pi]$ with boundary conditions $u(0,t)=\cos(-ct)e^{-dt}$ and $u(2\pi,t)=\cos(2\pi-ct)e^{-dt}$.
The exact solution is $u(x,y,t)=\cos(x-ct)e^{-dt}$. We set $c=1$ and $d=0.01$.
We test the third order boundary scheme proposed in Section \ref{sec-Dirichlet} and the fourth order interior compact finite difference with the fourth order SSP multistep time discretization.
The errors at  $T=1$ are listed in Table \ref{1dbctable} where
 $\Delta t=C_{ms}\min\{\frac{4}{19}\frac{\Delta x}{c},\frac{695}{1596}\frac{\Delta x^2}{d}\}$, $\Delta x=\frac{2\pi}{N}$.

\begin{table}[ht]
\centering
\caption{A linear convection diffusion equation with Dirichlet boundary conditions. }
  \begin{tabular}{|c|cc|cc|}
\hline  N &  $L^\infty$ error  &  order & $L^1$ error & order  \\
\hline
$10$  & 1.68E-3 & - & 8.76E-3 & - \\
\hline
  $20$  & 1.47E-4 &  3.51 & 7.12E-4 & 3.62\\ 
\hline
  $40$  & 8.35E-6 &  4.14 & 4.27E-5 & 4.06\\ 
\hline
  $80$  & 4.44E-7 & 4.23 & 2.28E-6 & 4.23\\
\hline
  $160$  & 2.30E-8 & 4.27 & 1.10E-7 & 4.37\\
  \hline
\end{tabular}
\label{1dbctable}
\end{table}
\end{ex}

\subsection{Two-dimensional problems with periodic boundary conditions}
In this subsection we test the fourth order compact finite difference scheme solving two-dimensional problems with periodic boundary conditions. 

\begin{ex}{Two-dimensional linear convection equation.}\label{2dconvectioneqnexmp}
Consider $u_t+u_x+u_y=0$ on the domain $[0,2\pi]\times[0,2\pi]$ with a periodic boundary condition.
The scheme is tested with a smooth initial condition $u_0(x,y)=\frac12+\frac12\sin^4(x+y)$ to verify the accuracy. 
The errors at time $T=1$ are listed in Table \ref{table-2d-linear-accuracy} where $\Delta t=C_{ms} \frac16 \Delta x$ for the  SSP multistep method
and $\Delta t=5 C_{ms} \frac16 \Delta x$ for the SSP Runge-Kutta method with 
$\Delta x=\Delta y=\frac{2\pi}{N}$. 
We can observe the fourth order accuracy for the multistep method on resolved meshes and obvious order reductions for the Runge-Kutta method.

\begin{table}[ht]
\centering
\caption{Fourth order accurate compact finite difference  with  limiter 
for the 2D linear equation.}
\resizebox{\textwidth}{!}{  
\begin{tabular}{|c|c c|c c|cc| c c|}
\cline{1-9}
& \multicolumn{4}{|c|} {Fourth order SSP multistep} & \multicolumn{4}{|c|} {Fourth order SSP Runge-Kutta}  \\
\cline{1-9}
\hline  $N\times N$ Mesh&  $L^1$ error  &  order & $L^\infty$ error & order & $L^1$ error & order & $L^\infty$ error & order\\
% \cline{1-9}
% & \multicolumn{8}{|c|}{$u_0(x)=0.5+0.5\sin(x+y)$}\\
% \cline{1-9}
%  $10\times 10$  & 5.84E-4 & - & 9.02E-4 & -& 6.30E-4 & - & 1.18E-3 & - \\
% \cline{1-9}
%  $20\times 20$  & 3.56E-5 & 4.03 & 5.54E-5 & 4.02 & 4.93E-5 & 3.68 & 1.42E-4 & 3.05 \\
% \cline{1-9}
%  $40 \times 40$  & 2.18E-6 &  4.03 & 3.43E-6 & 4.02 & 8.36E-6 & 2.56 & 2.99E-5 & 2.25 \\ 
%  \cline{1-9}
%  $80 \times 80$  & 1.35E-7 & 4.01 & 2.13E-7 & 4.01 & 1.72E-6 & 2.28 & 1.17E-5 & 1.35  \\
%  \cline{1-9}
%  $160 \times 160$  & 8.44E-9 & 4.00 & 1.32E-8 & 4.00 & 3.45E-7 & 2.32 & 3.24E-6 & 1.85 \\
\hline
% \cline{1-9}
% & \multicolumn{8}{|c|}{$u_0(x)=0.5+0.5\sin^4(x+y)$}\\
\cline{1-9}
 $10\times 10$  & 4.70E-2 & - & 1.17E-1 & -& 8.45E-2 & - & 1.07E-1  & - \\
\cline{1-9}
 $20\times 20$  & 5.47E-3 & 3.10 & 8.97E-3 & 3.71 & 5.56E-3 & 3.93 & 9.09E-3 & 3.56\\
\cline{1-9}
$40 \times 40$  & 3.04E-4 &  4.17 & 5.09E-4 & 4.13 & 2.88E-4 & 4.27 & 6.13E-4 & 3.89  \\ 
\cline{1-9}
$80 \times 80$  & 1.78E-5 & 4.09 & 2.99E-5 & 4.09 & 1.95E-5 & 3.89 & 6.77E-5 & 3.18 \\
\cline{1-9}
$160 \times 160$ & 1.09E-6 & 4.03 & 1.85E-6 & 4.01 & 2.65E-6 & 2.88 & 1.26E-5 & 2.43\\
\hline
% \cline{1-9}
% & \multicolumn{8}{|c|}{$u_0(x)=0.5+0.5\sin^8(x+y)$}\\
% \cline{1-9}
%  $10\times 10$  & 8.75E-2 & - & 2.19E-1 & - & 1.28E-1 & - & 1.84E-1 & -\\
% \cline{1-9}
%  $20\times 20$  & 1.35E-2 & 2.70 & 3.92E-2 & 2.48 & 2.58E-2 & 2.32 & 4.80E-2 & 1.94 \\
% \cline{1-9}
%  $40 \times 40$ & 1.16E-3 & 3.54 & 3.33E-3 & 3.55 & 1.22E-3  & 4.40 & 3.53E-3 & 3.78  \\ 
%  \cline{1-9}
%  $80 \times 80$ & 6.77E-5 & 4.09 & 1.98E-4 & 4.07 & 7.09E-5 & 4.10 & 2.85E-4 & 3.63 \\
%  \cline{1-9}
%  $160 \times 160$ & 4.14E-6 & 4.03 & 1.22E-5 & 4.02 & 6.34E-6 & 3.48 & 3.00E-5 & 3.25\\
% \hline
\end{tabular}}
\label{table-2d-linear-accuracy}
\end{table}

We also test the following discontinuous initial data:
$$u_0(x,y)=\left\{\begin{array}{ll}
 1, & \textrm{if $(x,y)\in[-0.2,0.2]\times[-0.2,0.2]$},\\
 0, & \textrm{otherwise}.
\end{array}\right.$$
The numerical solutions on a $80\times 80$ mesh at $T=0.5$ are shown in Figure \ref{fig-2d-shock} with 
$\Delta t=\frac{1}{6}C_{ms}\Delta x$ and $\Delta x=\Delta y=\frac{2\pi}{N}$.
Fourth order SSP multistep method is used.

\begin{figure}
\label{fig-2d-shock}
  \subfigure[Without any limiter.]{\includegraphics[scale=0.4]{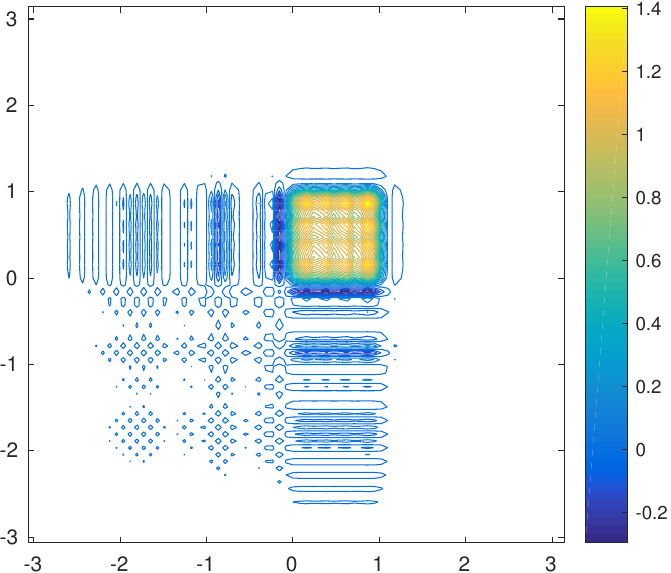}}
 \hspace{.1in}
   \subfigure[With bound-preserving limiter.]{\includegraphics[scale=0.4]{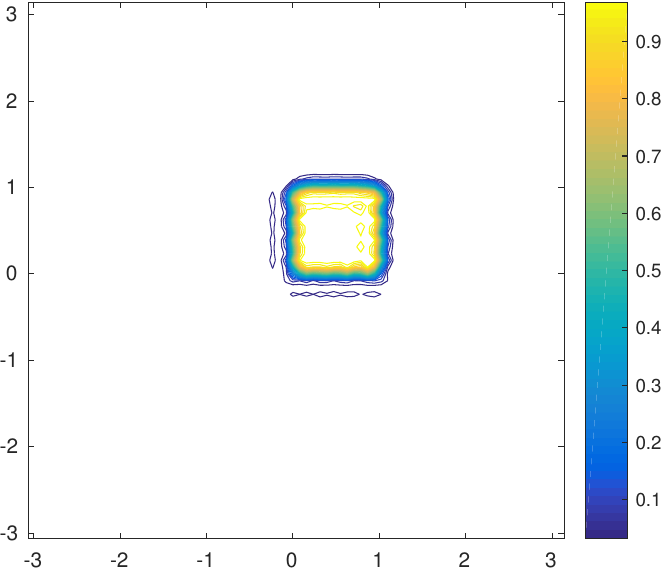}}
   \caption{Fourth order compact finite difference   for the 2D linear convection.}
\end{figure}

\end{ex}

\begin{ex}{Two-dimensional Burgers' equation.}
Consider $u_t+(\frac{u^2}{2})_x+(\frac{u^2}{2})_y=0$ with $u_0(x,y)=0.5+\sin(x+y)$
and periodic boundary conditions on  $[-\pi,\pi]\times[-\pi,\pi]$. 
At time $T=0.2$, the solution is smooth and the errors at $T=0.2$ on a $N\times N$ mesh are shown in the Table \ref{2dburgerstable} in which $\Delta t=C_{ms}\frac{\Delta x}{6\max_x |u_0(x)|}$ for multistep 
and $\Delta t=5 C_{ms}\frac{\Delta x}{6\max_x |u_0(x)|}$ for Runge-Kutta 
with $\Delta x=\Delta y=\frac{2\pi}{N}$.
At time $T=1$, the exact solution contains a shock. The numerical solutions of the fourth order SSP multistep method on a $100\times 100$ mesh are shown in Figure \ref{2dburgersimg} where $\Delta t=\frac{1}{6\max_x|u_0(x)|}C_{ms}\Delta x$.
The bound-preserving limiter ensures the solution to be in the range $[-0.5,1.5]$.

\begin{table}[ht]
\centering
\caption{Fourth order compact finite difference scheme with the bound-preserving limiter for the 2D Burgers' equation.}
\resizebox{\textwidth}{!}{  
\begin{tabular}{|c|c c|c c|cc| c c|}
\cline{1-9}
& \multicolumn{4}{|c|} {SSP multistep} & \multicolumn{4}{|c|} {SSP Runge-Kutta}  \\
\cline{1-9}
\hline  $N\times N$ Mesh &  $L^1$ error  &  order & $L^\infty$ error & order & $L^1$ error & order & $L^\infty$ error & order\\
\cline{1-9}
 $10\times 10$  & 1.08E-2 & - & 4.48E-3 & -& 9.16E-3 & - & 3.73E-2 & - \\
\cline{1-9}
 $20\times 20$  & 4.73E-4 & 4.52 & 3.76E-3 & 3.58 & 2.90E-4 & 4.98 & 2.14E-3 & 4.12 \\
\cline{1-9}
 $40 \times 40$  & 1.90E-5 &  4.64 & 1.45E-4 & 4.69 & 2.03E-5 & 3.83 & 1.12E-4 & 4.25 \\ 
 \cline{1-9}
$80 \times 80$  & 9.99E-7 & 4.25 & 7.43E-6 & 4.29 & 2.35E-6 & 3.12 & 1.54E-5 & 2.86  \\
 \cline{1-9}
$160 \times 160$  & 5.87E-8 & 4.09 & 4.26E-7 & 4.13 & 3.62E-7 & 2.70 & 5.13E-6 & 1.59 \\
\hline
\end{tabular}}
\label{2dburgerstable}
\end{table}

 \begin{figure}[ht]
 \subfigure[Without the bound-preserving limiter]{\includegraphics[scale=0.3]{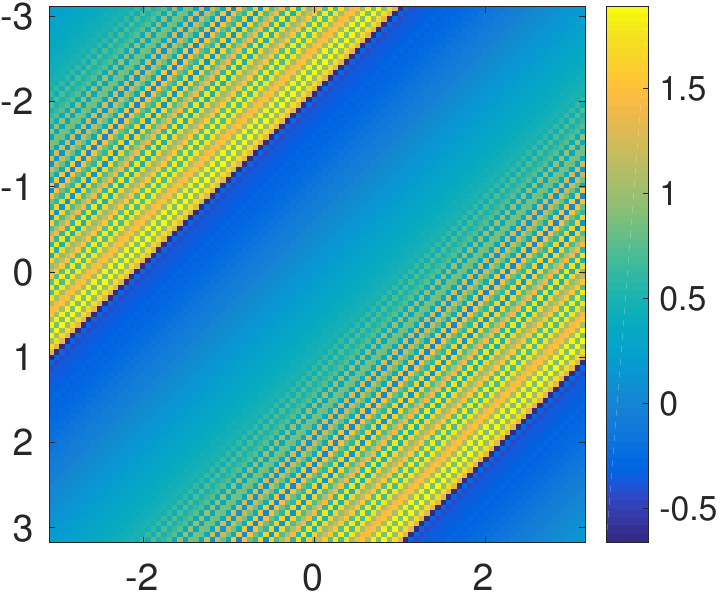} }
 \hspace{.1in}
 \subfigure[With the bound-preserving limiter]{\includegraphics[scale=0.3]{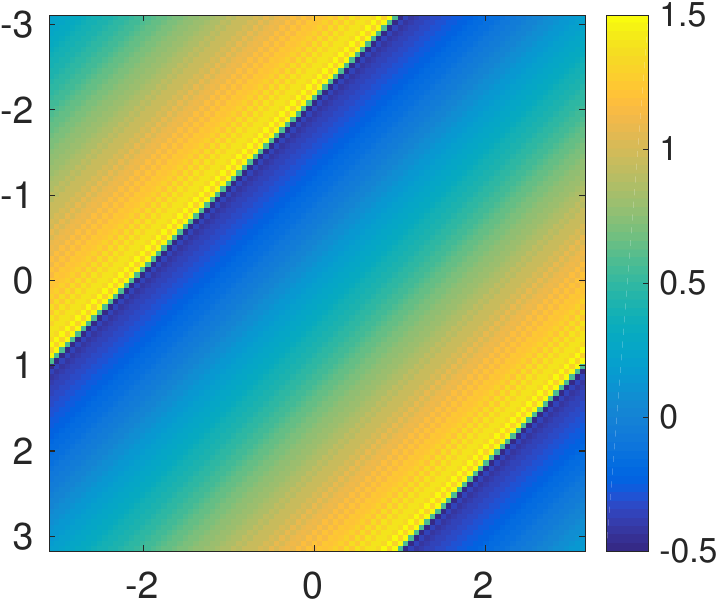}}
 \hspace{.1in}
  \subfigure[The exact solution]{\includegraphics[scale=0.3]{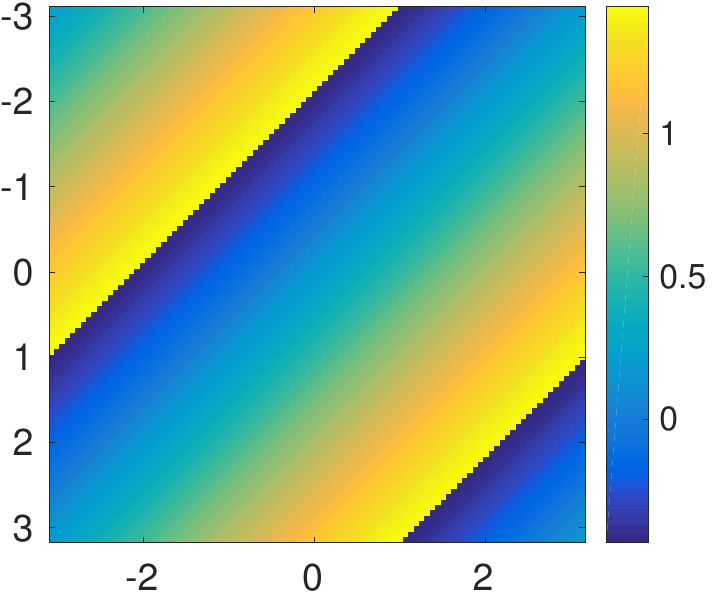}}
\caption{ The fourth order scheme. 2D Burgers' equation.}
\label{2dburgersimg}
 \end{figure}
\end{ex}

\begin{ex}{Two-dimensional convection diffusion equation.}

 Consider the equation $u_t+c(u_x+u_y)=d(u_{xx}+u_{yy})$ with $u_0(x,y)=\sin(x+y)$ and a 
 periodic boundary condition on  $[0,2\pi]\times[0,2\pi]$. 
The errors at time $T=0.5$ for $c=1$ and $d=0.001$ are listed in Table \ref{table-2d-conffu-accuracy}, in which
 $\Delta t=C_{ms}\min\{\frac{\Delta x}{6c},\frac{5\Delta x^2}{48d}\}$ for the  fourth-order SSP multistep method,
and $\Delta t=5 C_{ms}\min\{\frac{\Delta x}{6c},\frac{5\Delta x^2}{48d}\}$ for the fourth-order SSP Runge-Kutta method, where $\Delta x=\Delta y=\frac{2\pi}{N}$. 
\begin{table}[ht]
 \centering
\caption{Fourth order compact finite difference with  limiter for the 2D convection diffusion equation.}
\resizebox{\textwidth}{!}{  
\begin{tabular}{|c|c c|c c|cc| c c|}
\cline{1-9}
& \multicolumn{4}{|c|} {Fourth order SSP multistep} & \multicolumn{4}{|c|} {Fourth order  SSP Runge-Kutta}  \\
\cline{1-9}
\hline  N &  $L^1$ error  &  order & $L^\infty$ error & order & $L^1$ error & order & $L^\infty$ error & order\\
\cline{1-9}
 $10\times 10$  & 6.26E-4 & - & 9.67E-4 & -& 6.68E-4 & - & 9.59E-4 & - \\
\cline{1-9}
 $20\times 20$  & 3.62E-5 & 4.11 & 5.61E-5 & 4.11 & 3.60E-5 & 4.21 & 6.09E-5 & 3.98 \\
\cline{1-9}
 $40 \times 40$  & 2.20E-6 &  4.04 & 3.45E-6 & 4.02 & 2.24E-6 & 4.00 & 3.52E-6 & 4.12 \\ 
 \cline{1-9}
$80 \times 80$  & 1.35E-7 & 4.02 & 2.13E-7 & 4.01 & 1.37E-7 & 4.04 & 2.15E-7 & 4.03  \\
 \cline{1-9}
$160 \times 160$  & 8.45E-9 & 4.01 & 1.33E-8 & 4.01 & 8.46E-9 & 4.02 & 1.33E-8 & 4.02 \\
\hline
\end{tabular}
}
\label{table-2d-conffu-accuracy}
\end{table}
\end{ex}

\begin{ex}{Two-dimensional porous medium equation.}

We consider the equation $u_t=\Delta (u^m)$ with the following initial data 
\[u_0(x,y)=\left\{\begin{array}{ll}
 1, & \textrm{if $(x,y)\in[-0.5,0.5]\times[-0.5,0.5]$,}\\
 0, & \textrm{if $(x,y)\in[-2,2]\times[-2,2]/[-1,1]\times[-1,1]$,}
\end{array}\right.\]
and a periodic boundary condition on domain $[-2,2]\times[-2,2]$.
See Figure \ref{2dpmecontour} for the solutions at time $T=0.01$ for
SSP multistep method with $\Delta t=\frac{5}{48\max_x|u_0(x)|}C_{ms}\Delta x$ and $\Delta x=\Delta y=\frac{1}{15}$. The numerical solutions are strictly non-negative,
which is nontrivial for high order accurate schemes.
High order schemes without any positivity treatment will generate negative solutions in this test, see \cite{zhang2012maximum2, zhang2013maximum, Srinivasan2018positivity}.

 \begin{figure}[ht]
 \subfigure[$m=3$.]{\includegraphics[scale=0.33]{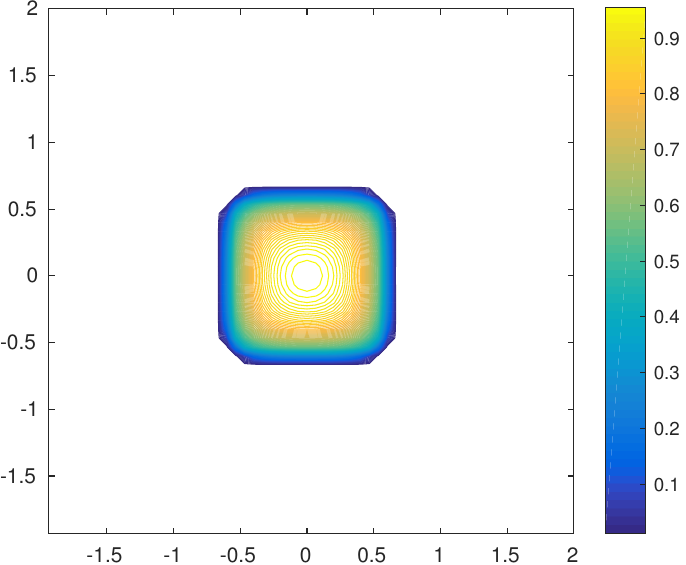} }
 \hspace{.1in}
 \subfigure[$m=4$.]{\includegraphics[scale=0.33]{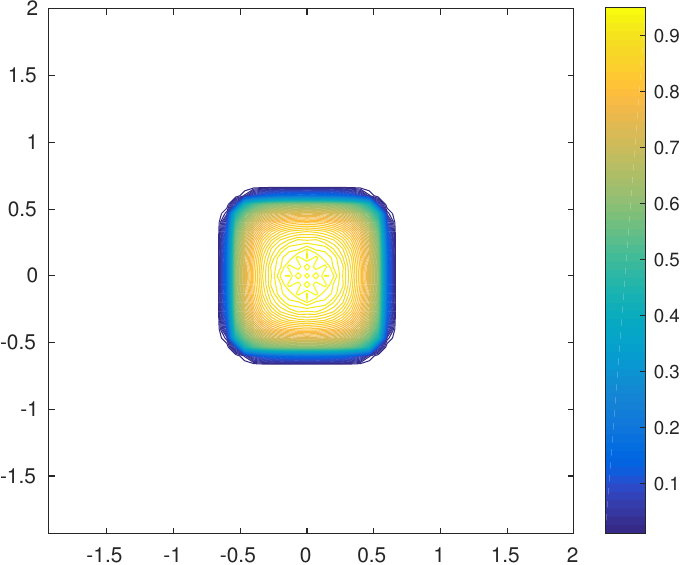}}
 \hspace{.1in}
  \subfigure[$m=5$.]{\includegraphics[scale=0.33]{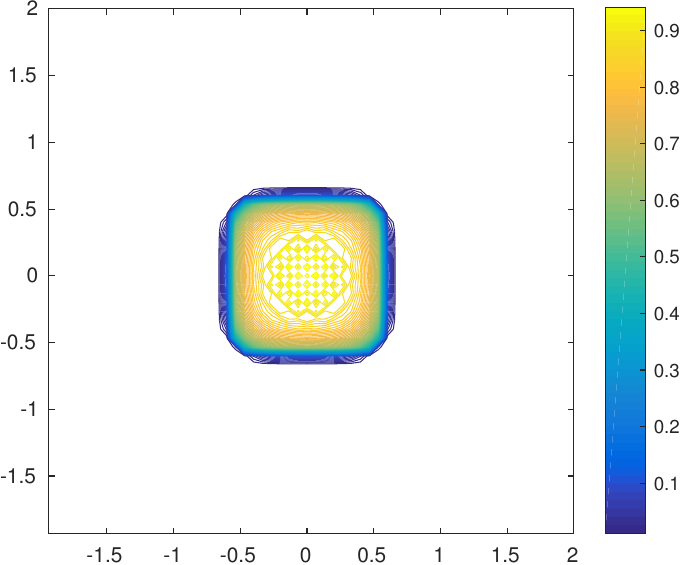}}
\caption{The fourth order scheme with limiter for 2D porous medium equations  $u_t=\Delta (u^m)$. }
\label{2dpmecontour}
 \end{figure}
\end{ex}

\section{Concluding remarks}
\label{sec-remark}

In this paper we have demonstrated that fourth order accurate compact finite difference schemes for convection diffusion problems with
periodic boundary conditions satisfy a weak monotonicity property, and a simple
three-point stencil limiter can enforce bounds without destroying the global conservation. Since the limiter is designed based on
an intrinsic property in the high order finite difference schemes, the accuracy of the limiter can be easily justified. 
This is the first time that the weak monotonicity is established for a high order accurate finite difference scheme, complementary to results regarding
the weak monotonicity property of high order finite volume and discontinuous Galerkin schemes in \cite{zhang2010maximum,zhang2011maximum, zhang2012maximum}.

We have discussed extensions to two dimensions, higher order accurate schemes and general boundary conditions, for which the five-diagonal weighting matrices can be factored 
as a product of tridiagonal matrices so that the same simple three-point stencil bound-preserving limiter can still be used. 
We have also proved that the TVB limiter in \cite{cockburn1994nonlinearly} does not affect the bound-preserving property. Thus with both the TVB and 
the bound-preserving limiters, the numerical solutions of high order compact finite difference scheme can be rendered non-oscillatory and  strictly bound-preserving without
losing accuracy and global conservation.
Numerical results suggest the good performance of the high order bound-preserving compact finite difference schemes.  

For more generalizations and applications,
there are certain complications. For using compact finite difference schemes on non-uniform meshes, one popular approach is to introduce a mapping to a uniform grid but such a mapping results in an extra variable coefficient which 
may affect the weak monotonicity. Thus any extension to non-uniform grids is much less straightforward. 
For applications to systems, e.g., preserving positivity of density and pressure in compressible Euler equations, the weak monotonicity can be easily extended to a weak positivity property. However, the same three-point stencil limiter cannot enforce the positivity for pressure. One has to construct a new limiter for systems.

\bibliographystyle{amsplain}
\bibliography{positivity.bib}

\end{document}